\let\@wraptoccontribs\wraptoccontribs
\newtheorem{thm}{Theorem}[section] 
\newtheorem{mainthm}{Theorem}
\newtheorem*{mainthm'}{Main Theorem}
\newtheorem{defn}[thm]{Definition} 
\newtheorem{lem}[thm]{Lemma}
\newtheorem{prop}[thm]{Proposition}
\newtheorem{cor}[thm]{Corollary}
\newtheorem{ex}[thm]{Example}
\newtheorem{rmk}[thm]{Remark}
\newcommand{\Z}{\mathbb{Z}}
\newcommand{\N}{\mathbb{N}}
\newcommand{\R}{\mathbb{R}}
\newcommand{\C}{\mathbb{C}}
\newcommand{\cV}{\mathcal{V}}
\newcommand{\ind}{\operatorname{Ind}}
\newcommand{\Res}{\mathrm{Res}}
\newcommand{\alt}{\mathrm{alt}}
\newcommand{\cH}{\mathcal{H}}
\newcommand{\cA}{\mathcal{A}}
\newcommand{\cB}{\mathcal{B}}
\newcommand{\cE}{\mathcal{E}}
\newcommand{\cD}{\mathcal{D}}
\newcommand{\cC}{\mathcal{C}}
\newcommand{\cK}{\mathcal{K}}
\newcommand{\cP}{\mathcal{P}}
\newcommand{\cR}{\mathcal{R}}
\newcommand{\cI}{\mathcal{I}}
\newcommand{\cM}{\mathcal{M}}
\newcommand{\cF}{\mathcal{F}}
\newcommand{\cZ}{\mathcal{Z}}
\newcommand{\ms}{\mathrm{ms}}
\newcommand{\e}{\mathbf{e}}
\newcommand{\std}{\mathrm{std}}
\newcommand{\norm}[1]{\left\lVert #1 \right\rVert}
\newcommand{\inprod}[1]{\left\langle #1 \right\rangle}
\newcommand{\Hmb}{\mathrm{H}_\mathrm{b}}
\newcommand{\diam}{\mathop{\mathrm{diam}}}
\newcommand{\id}{{\mathrm{id}}}
\newcommand{\Hcb}{\mathrm{H}_\mathrm{cb}}
\newcommand{\ev}{\mathrm{ev}}
\newcommand{\bC}{\operatorname{C_\mathrm{b}}}
\newcommand{\Sym}{\mathrm{Sym}}
\newcommand{\bd}{\partial}
\renewcommand{\bd}[1]{\partial #1}
\newcommand{\aut}{\mathrm{Aut}}
\newcommand{\stab}{\mathrm{Stab}}
\newcommand{\sym}{\mathrm{Sym}}
\newcommand{\sgn}{\mathrm{sgn}}
\newcommand{\Ind}{\mathrm{Ind}}
\newcommand{\wt}{\widetilde}
\newcommand{\perm}{\mathrm{perm}}
\title{Bounded Cohomology and Unitary Representations of Automorphism Groups of Regular Trees}
\author{Cunyuan Zhao\footnote{\'Ecole Polytechnique F\'ed\'erale de Lausanne, Lausanne, Switzerland. Email Address: cunyuan.zhao@epfl.ch\\ ORCID: 0009-0008-4773-2053.}}
\date{\vspace{-5.5ex}}
\begin{document}

\maketitle

\begin{abstract}
   Let $G=\aut(T)$ be the group of automorphisms of a regular tree $T$. We compute the continuous bounded cohomology $\Hcb^*(G,\cH_\pi)$ of $G$ in all positive degrees, with coefficients arising from any irreducible continuous unitary representations $(\pi, \cH_\pi)$ of $G$. To the author's knowledge, this seems to be the first instance where the continuous bounded cohomology is determined in all positive degrees with coefficients in any irreducible continuous unitary representations without being zero in all cases.
\medbreak
   \noindent \textbf{Mathematics Subject Classification:} 20J06, 22D10, 20E08.
\end{abstract}

\maketitle


\section{Introduction}

In this paper, we consider the automorphism group $\aut(T)$ of a regular tree $T$ equipped with the topology of point-wise convergence. In analogy to the work of Burger and Monod on $\mathrm{SL}_2(\R)$ (\cite{BM02a}), we determine the continuous bounded cohomology $\Hcb^*(\aut(T), \cH_\pi)$ of $\aut(T)$ with coefficients in any irreducible continuous unitary representations $(\pi, \cH_\pi)$ of $G$. Whereas \cite{BM02a} determined $\Hcb^n(\mathrm{SL}_2(\R),-)$ for $n\leq 2$, we are able to compute $\Hcb^n(\aut(T),-)$ for all $n\geq 1$. 

Our result contrasts with the usual $n$-th continuous cohomology $\mathrm{H}_\mathrm{c}^n(\aut(T), \cH_\pi)$, which is known to vanish for all $n\geq 2$ (see \cite{BW00}, Lemma 1.12 in Chapter X) and is non-trivial for exactly one irreducible continuous unitary representation when $n=1$ (\cite{Neb12}). 

Bounded cohomology of groups of non-trivial coefficients has witnessed a wide range of applications. Indeed, a fundamental result of Johnson in \cite{Joh72} states that a group is amenable if and only if it has vanishing bounded cohomology to all positive degrees with coefficients in any dual Banach $G$-modules. Bounded cohomology with coefficients in unitary representations of groups arises in the study of orbit equivalence rigidity (Monod and Shalom, \cite{MS06}), and cocycle superrigidity of groups acting on negatively curved spaces (Monod and Shalom, \cite{MS04}). 

Recent progress in bounded cohomology that goes beyond the trivial coefficient includes the vanishing of the bounded cohomology of lamplighters and the Thompson group $F$ in all positive degrees with coefficients in any separable dual Banach $G$-module (Monod, \cite{Mon22}). This was further extended to groups with commuting cyclic conjugates by Campagnolo et al. (\cite{CFFLM24}). 

We now state a rough version of our main result, which will be made precise in Theorem \ref{thm: A}, \ref{thm: B} and \ref{thm: C} below. Let $T$ be a $(q+1)$-regular tree $(q\geq 2)$. Denote by $\widehat{G}$ the collection of all equivalence classes of irreducible continuous unitary representations of $G:=\aut(T)$.

\begin{mainthm'}
Let $n$ be a positive integer. Then the $n^{\textrm{\emph{th}}}$ continuous bounded cohomology $\Hcb^n(G,\cH_\pi)$ vanishes whenever $n\ne 2$ for any $(\pi,\cH_\pi)\in \widehat{G}$.

Moreover, there is a countably infinite family $\cC_G\subset \widehat{G}$ such that 
$\Hcb^2(G,\cH_\pi)$ is non-trivial and 1-dimensional
for all $(\pi,\cH_\pi)\in \cC_G$, and 
$\Hcb^2(G,\cH_\pi)=0$
for any $(\pi,\cH_\pi)\in \widehat{G}\setminus \cC_G$. 
\end{mainthm'}

To the author's knowledge, our computation of $\Hcb^*(\aut(T), \cH_\pi)$ seems to be the first instance where the continuous bounded cohomology is determined in every positive degree with coefficients in any irreducible unitary representations without being zero in all cases. 
\bigbreak

To be precise about $\cC_{G}$ and the corresponding bounded cohomologies, we introduce a few notations and definitions (see Section \ref{sec: cuspidal_intro} for more details). 

From now on, all unitary representations of $G$ are assumed to be continuous (with respect to the weak or, equivalently, strong operator topology). For a subtree $S$ in $T$, denote by $G(S)$ (resp. $\wt{G}(S)$) its point-wise (resp. set-wise) stabilizer in $G$. A non-empty subtree $S$ in $T_q$ is \textbf{complete} if either it is a vertex, or if any vertex in $S$ has degree $1$ or $q+1$. If $(\pi, \cH_\pi)\in \widehat{G}$, then one can show that there exists some finite complete subtree $S$ such that the space of $G(S)$-invariant vectors $\cH_\pi^{G(S)}$ is non-trivial (see Section III.1 in \cite{F-TN91}). We denote by $\cM_\pi$ the set of minimal finite complete subtree $S$ such that $\cH_\pi^{G(S)}\ne 0$. It is known that $G$ acts transitively on $\cM_\pi$ (see Lemma 23 in \cite{Ama03}), and the shape of finite complete subtrees in $\cM_\pi$ plays a fundamental role in the classification of irreducible unitary representations of $G$ by Ol'shanski (\cite{Ol77}, see also \cite{F-TN91}).

\begin{defn}\label{def_int: heads}
\emph{Let $S$ be a finite complete subtree with $\diam(S)\geq2$. A \textbf{head} of $S$ is the minimal subtree in $S$ that contains the vertices in $S\setminus S'$, where $S'$ is a maximal complete proper subtree of $S$. We say that $S$ is $k$-\textbf{headed} if it has $k$ heads.}
\end{defn}

We now give names to special classes of finite complete subtrees:

\begin{defn}\label{def_int: centipede}
\emph{A $2$-headed finite complete subtree $S$ of diameter $k\geq 3$ is called an $k$-\textbf{centipede}. Any finite complete subtree of diameter $2$ is called a \textbf{spider}.}
\end{defn}

\begin{figure}[!htb]
   \centering
    \setlength{\unitlength}{0.5cm}
\begin{picture}(10,3)
\put(5,2){\line(1,0){2}}
\put(5,2){\line(1,-1){0.8}}
\put(5,2){\line(-1,-1){0.8}}
\put(5,2){\line(-1,0){2}}
\linethickness{0.7mm}
\put(7,2){\color{red}\line(1,0){1.4}}
\put(7,2){\color{red}\line(1,-1){0.8}}
\put(7,2){\color{red}\line(-1,-1){0.8}}
\put(3,2){\color{red}\line(-1,0){1.4}}
\put(3,2){\color{red}\line(1,-1){0.8}}
\put(3,2){\color{red}\line(-1,-1){0.8}}
\end{picture}
\begin{picture}(10,3)
\put(5,2){\line(1,0){1}}
\put(5,2){\line(1,-1){0.7}}
\put(5,2){\line(-1,-1){0.7}}
\put(5,2){\line(0,-1){1}}
\put(5,2){\line(-1,0){1}}
\put(5,2){\line(1,1){0.7}}
\put(5,2){\line(-1,1){0.7}}
\put(5,2){\line(0,1){1}}
\end{picture}
    \caption{A $4$-centipede in a $4$-regular tree (left) and a spider in an $8$-regular tree (right). The two heads of the $4$-centipede are indicated by thick red segments.}
    \label{fig:centipede0}
\end{figure}

We now state Theorem \ref{thm: A}:
\begin{mainthm}\label{thm: A}
Let $(\pi,\cH_\pi)\in \widehat{G}$ be a unitary irreducible representation of $G$. If $\cM_\pi$ does not consist of centipedes or spiders, then $\Hcb^n(G, \cH_\pi)=0$ for all $n\geq 1$.
\end{mainthm}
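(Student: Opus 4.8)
The plan is to reduce the computation of $\Hcb^n(G,\cH_\pi)$ to a computation on a suitable boundary and to exploit the geometry of the tree together with amenability of the relevant stabilizers. Recall that the boundary $\partial T$ of the tree, equipped with a quasi-invariant measure class, is an amenable $G$-space; hence by Burger--Monod theory $\Hcb^*(G,\cH_\pi)$ is computed by the complex of $G$-equivariant bounded measurable alternating functions $(\partial T)^{*+1}\to \cH_\pi$, i.e. $\Hcb^n(G,\cH_\pi)\cong \mathrm{H}^n\bigl(L^\infty_{\mathrm{w}*,\alt}((\partial T)^{\bullet+1},\cH_\pi)^G\bigr)$. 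The first step is therefore to set up this resolution carefully, check that $\partial T$ is a doubly (indeed $\aleph_0$-fold) ergodic amenable $G$-space with respect to the coefficients, and record that the differentials are the usual alternating-sum maps.

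The second, and central, step is to analyze the space of $G$-invariant vectors $\cH_\pi^{\,\stab(\xi_0,\dots,\xi_n)}$ attached to a generic $(n+1)$-tuple $(\xi_0,\dots,\xi_n)\in(\partial T)^{n+1}$. For $n\ge 1$ the stabilizer of a generic pair of distinct boundary points fixes a bi-infinite geodesic line, and the stabilizer of a generic triple or longer tuple fixes (or set-wise stabilizes) a configuration containing arbitrarily long finite complete subtrees that are \emph{not} centipedes — in particular, tripods with long legs, which contain $3$-headed complete subtrees. The hypothesis that $\cM_\pi$ does not consist of centipedes is designed precisely so that, via the Ol'shanski / Fig\`a-Talamanca--Nebbia classification, the minimal complete subtrees carrying $G(S)$-invariant vectors are themselves non-centipede, hence small relative to every generic finite-tuple configuration; one then argues that the generic-tuple stabilizer, being "too large", kills all invariant vectors: $\cH_\pi^{\,\stab(\xi_0,\dots,\xi_n)}=0$ for the relevant generic configurations. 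I expect to prove this by comparing $\cM_\pi$ with the family of finite complete subtrees stabilized by the generic tuple stabilizer and invoking minimality in $\cM_\pi$ together with transitivity of $G$ on $\cM_\pi$ (Lemma 23 of \cite{Ama03}).

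The third step is to upgrade this pointwise vanishing to vanishing of the cohomology of the whole complex. Here the mechanism is a standard "bounded-cohomology with vanishing invariants" argument: since the generic stabilizers fix no nonzero vector, any $G$-equivariant essentially bounded cocycle $(\partial T)^{n+1}\to\cH_\pi$ must vanish almost everywhere on the generic (full-measure) set, hence is zero in $L^\infty$; combined with ergodicity of $G$ on the lower-dimensional configuration spaces and the contracting homotopy furnished by the amenability of $\partial T$, this forces $\Hcb^n(G,\cH_\pi)=0$ for all $n\ge 1$. For the degrees $n=0$ and the low degrees one checks directly that $\cH_\pi^G=0$ (as $\pi$ is irreducible and nontrivial, or trivial and then handled separately) and that there are no invariant vectors under the one- and two-point stabilizers under the stated hypothesis.

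The main obstacle I anticipate is the second step: one must make precise, for each "shape" class of $\cM_\pi$ allowed by the hypothesis (non-centipede finite complete subtrees, including those of diameter $\le 2$ that are not $2$-centipedes, and all $k$-headed trees with $k\ge 3$ or diameter-$2$ trees of the wrong branching type), that the stabilizer of a generic boundary tuple contains a pointwise stabilizer $G(S')$ with $S'$ strictly "dominating" every element of $\cM_\pi$ in the partial order by inclusion up to $G$-action, so that $\cH_\pi^{G(S')}\subseteq \cH_\pi^{G(S)}$ is forced to be zero by minimality. This requires a careful case analysis of the combinatorics of heads and of which finite complete subtrees arise as fixed-point sets of geodesic-line and tripod stabilizers, and it is where the definitions of \emph{head} and \emph{centipede} do the real work.
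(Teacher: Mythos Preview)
There is a genuine gap in your second step, and it undermines the whole strategy. Your claim is that for a generic tuple $\xi=(\xi_0,\dots,\xi_n)\in(\partial T)^{n+1}$ one has $\cH_\pi^{\,\stab(\xi)}=0$, and that the stabilizer ``contains a pointwise stabilizer $G(S')$'' with $S'$ dominating the minimal trees. But the containment goes the other way: $\stab(\xi)$ is exactly $G(\mathrm{hull}(\xi))$, the pointwise stabilizer of the (infinite) convex hull of $\xi_0,\dots,\xi_n$, so $\stab(\xi)\subseteq G(S')$ for every finite $S'\subset\mathrm{hull}(\xi)$, and therefore $\cH_\pi^{G(S')}\subseteq \cH_\pi^{\,\stab(\xi)}$. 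In particular, for a spherical $\pi$ (where $\cM_\pi$ consists of single vertices, hence certainly not centipedes) the median $m$ of any generic triple lies in the hull, $\stab(\xi)\leq K_m$, and $0\ne \cH_\pi^{K_m}\subseteq \cH_\pi^{\,\stab(\xi)}$. The same failure occurs for special representations (edges on the tripod) and for cuspidal ones whenever the hull contains a copy of the minimal tree. So the invariants do \emph{not} vanish, and $G$-equivariance alone cannot force the cochains to be zero.

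What the paper actually uses is the \emph{alternating} condition, not merely equivariance under $\stab(\xi)$. The key device is the Flipping Lemma: given $\gamma\in D_{n+1}(\partial T)$ and a subtree $S$ that ``misses'' the triple $(\gamma_0,\gamma_1,\gamma_2)$, one produces $h\in G(S)$ that transposes two coordinates $\gamma_i,\gamma_j$ and fixes the rest, so an alternating $G$-equivariant cochain satisfies $\alpha(\gamma)(S)=-\alpha(\gamma)(S)$ at that ``coordinate'' of $\cH_\pi$. For special and cuspidal $\pi$ this is applied pointwise in $S\in\cM_\pi$ (or edge-wise in $\wt{\cE_T}$); for spherical $\pi$ one first passes through an averaging transform to land in the smooth vectors $\cK(\partial T)$, where values are honest functions on $\partial T$, and then flips. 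The non-centipede hypothesis enters only in degree $2$ for cuspidal $\pi$: if the minimal tree has at least three heads, then for any pair $(\gamma_0,\gamma_1)$ some head is untouched by both rays, and non-degeneracy of the inducing representation $\omega$ (rather than minimality in $\cM_\pi$) forces the value to vanish. Your proposal does not contain either of these mechanisms, and the case analysis you outline at the end cannot be completed along the lines you suggest because the inclusion you need is reversed.
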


This contrasts with Theorem 1.1 in \cite{BM02a}, which states that $\Hcb^2(\mathrm{SL}_2(\R), \cH_\pi)\ne 0$ if and only if $\pi$ is a unitary spherical representation. Here, we recall that unitary spherical representations of $\aut(T)$ correspond to the cases where $\cM_\pi$ consists of single vertices. See Section \ref{sec: sph_intro} for more details. 

We introduce a bit more background on unitary representations of $G$ to state our results on centipedes and spiders. Let $S$ be a $k$-headed finite complete subtree of diameter at least 2, and let $S_1,...,S_k$ be its maximal proper complete subtrees. Set $A_j:=p_S(G(S_j))$. We say $(\omega, V)$ of the finite group $\wt{G}(S)/G(S)\simeq \aut(S)$ is \textbf{non-degenerate} if there are no non-trivial $A_j$-invariant vectors for all $j=1,...,k$. As shown in \cite{Ol77}, if $S\in \cM_\pi$ has diameter at least 2, then $(\pi, \cH_\pi)$ must be induced by a unitary representation of $\wt{G}(S)$ that factors through some non-degenerate irreducible unitary representation $(\omega, V)$ of the finite group $\wt{G}(S)/G(S)\simeq \aut(S)$. In this case, we write $\pi:=\ind_S\, \omega$.

Now suppose that $S$ is a centipede or a spider. Fix $x\in S\setminus S_1$ and $y\in S\setminus S_2$ and denote by $Q=Q(x,y)$ the point-wise stabilizer of $\{x,y\}$ and by $\wt{Q}=\wt{Q}(x,y)$ the set-wise stabilizer of $\{x,y\}$. Note that $Q$ and $\wt{Q}$ does not depend on the choice of $x$ and $y$ in $S\setminus S_1$ and $S\setminus S_2$.  

\begin{mainthm}\label{thm: B}
Let $S$ be a centipede of a spider and let $(\omega, V)$ be a non-degenerate irreducible unitary representation of $\aut(S)$. Then $\dim V^{Q}/V^{\wt{Q}}\leq 1$ and
$$
\Hcb^n(G,\ind_S\,\omega)\simeq\begin{cases}
    V^{Q}/\, V^{\wt{Q}} & \textrm{if }\, n=2\\
    0 & \textrm{if }\, n=1 \textrm{ or }  n\geq 3
    \end{cases}.
$$
\end{mainthm}

Theorem \ref{thm: A} and \ref{thm: B} provide an easily-verifiable criterion to determine whether $\Hcb^2(G,\cH_\pi)$ is nontrivial for a given $\pi\in \widehat{G}$. In fact, we were able to obtain a complete and explicit list of inequivalent representations $\pi\in \widehat{G}$ such that $\Hcb^2(G, \cH_\pi)\ne 0$ (see Section \ref{sec: C}). In particular, one has the following uniqueness result:

\begin{mainthm}\label{thm: C}
Let $S$ be a centipede or a spider of diameter $k\geq 2$. Then up to isomorphism, there is a unique non-degenerate irreducible unitary representation $(\omega_k, V_k)$ of $\aut(S)$ such that 
$\Hcb^2(G, \pi_k)\ne 0$, where $\pi_k :=\ind_S \, \omega_k$.
\end{mainthm}

In other words, the family $\cC_G$ in the Main Theorem is given by $\{\pi_k:k\in \N_{\geq2}\}$. If $S$ is a spider (i.e. if $k=2$), then $\aut(S)$ is the symmetric group $\Sym(q+1)$ and $\omega_2$ is the second exterior product of the standard representation of $\Sym(q+1)$. For $k\geq 3$, $\omega_k$'s are constructed more or less analogously to $\omega_2$. See Section \ref{sec: C} for more details.

\bigbreak
Previously, Monod and Shalom constructed in \cite{MS04} (see also \cite{MS03}) nontrivial bounded cocycles $c_{\ms}^k$ of $G$ with coefficients in a family of unitary representations $\{(\lambda_k, \cH_k)\}_{k\in\N_{\geq2}}$ of $G$. Roughly speaking, $\cH_k$ is some submodule of the $\ell^2$-space of $k$-consecutive edges in $T$ on which $G$ acts by left translations (see Section \ref{sec: D} for more details). It turns out that the cocycles $c_{\ms}^k$ already "contain" all information of $\Hcb^2(G, \pi_k)$, where $\pi_k$'s are the representations mentioned in Theorem \ref{thm: C}:

\begin{mainthm}\label{thm: D}
Let $k\geq 2$. Then the unique representation $(\pi_k,\cH_{\pi_k})\in \widehat{G}$ mentioned in Theorem \ref{thm: C} is a subrepresentation of $(\lambda_k,\cH_k)$.

Moreover, if $P:\cH_k\to\cH_{\pi_k}$ is the orthogonal projection and $P_*: \Hcb^2(G,\lambda_k)\to\Hcb^2(G,\pi_k)$ is the induced map,
then $\Hcb^2(G,\pi_k)\simeq \C\cdot P_*[c_\ms^k]$.
\end{mainthm}
\bigbreak
We mention here an alternative way to see the existence of at least one irreducible unitary representation $(\pi, \cH_\pi)$ such that $\Hcb^2(\aut(T),\cH_\pi)\ne 0$: The same argument in the introduction of \cite{BM02a} and in the proof of Proposition C in \cite{Mon24} works. To illustrate this and give further context, we repeat this argument in the next paragraph.

Let $G=\aut(T)$, and let $\Gamma\leq G$ be a non-abelian free group that is a cocompact lattice in $G$. Then by Corollary 11.1.5 in \cite{Mon01}, there is an injection
$$
L^2i: \Hmb^2(\Gamma)\longhookrightarrow \Hcb^2(G, L^2(G/\Gamma)),
$$
where $L^2(G/\Gamma)$ is the Banach $G$-module on which $G$ acts by left translations. The $G$-module $L^2(G/\Gamma)$ splits as a direct sum $\bigoplus_{\pi\in\widehat{G}} \cH_\pi^{\oplus n_\pi}$ of continuous irreducible unitary representations of $G$ (see Proposition 9.2.2 in \cite{DE14}) with finite multiplicities $n_\pi$. Due to the double ergodicity of the action $G$ on the boundary $\bd T$ of $T$ with coefficients (11.2.2 in \cite{Mon01}), there is no coboundary in $L_\alt^\infty((\bd T)^3, \cH_\pi)$ (see Corollary \ref{cor: reso_bd} in Section 4 for more background). Thus, the vector space $\Hcb^2(G, L^2(G/\Gamma))$ can be viewed as a subspace of the product $\prod_{\pi\in\widehat{G}}\Hcb^2(G, \cH_\pi)^{\oplus n_\pi}$. Since $\Hmb^2(\Gamma)$ is infinite-dimensional (\cite{Bro81}), there must be at least one irreducible unitary representation $(\pi, \cH_\pi)$ of $G$ such that $\Hcb^2(G, \cH_\pi)\ne 0$. 

Thus, our results described all irreducible unitary representations of $G$ that could possibly contribute to the infinite-dimensional space $\Hcb^2(G, L^2(G/\Gamma))$: all are children of centipedes and spiders.


\medbreak

\noindent\textbf{Outline of the paper.} Section \ref{sec: prelim} sets up the notation and conventions. Section \ref{sec: uni_rep} is a survey of the classification of irreducible unitary representations of $\aut(T)$ by Ol'shanski (\cite{Ol77}) and Figa-Talamanca and Nebbia (\cite{F-TN91}). Section \ref{sec: coh} collects useful facts in continuous bounded cohomology. Section \ref{sec: ae} deals with the technical issue of choosing desired representatives in the relevant $L^\infty$ spaces. Section \ref{sec: pf} is devoted to the proof of Theorem \ref{thm: A} and \ref{thm: B}. Theorem \ref{thm: C} is addressed in Section \ref{sec: C}. Finally, Section \ref{sec: D} discusses the Monod-Shalom cocycle and proves Theorem \ref{thm: D}.

\medbreak

\noindent\textbf{Acknowledgement.} This is a part of the author's PhD project supervised by Nicolas Monod at the Ecole polytechnique f\'ed\'erale de Lausanne (EPFL) in Switzerland. I would like to express my deepest gratitude to Nicolas for suggesting the project and for all the enlightening discussions and advice.

\section{Notations and Preliminaries on Trees}\label{sec: prelim}

In this short section, we set up the notations and recall necessary notions about trees and their boundaries. 

A graph $X$ is a pair $(\cV_X,\cE_X)$ where $\cE_X$ the set of \textbf{unoriented edges}, which is a collection of two-element subsets of the \textbf{vertex set} $\cV_X$. We denote by $\wt{\cE_X}$ the set of \textbf{oriented edges} of $X$. It consists of ordered pairs $(x,y)\in \cV_X\times \cV_X$ such that $\{x,y\}\in \mathcal{E}_X$. Write $e_{xy} := (x,y)$. For $e=e_{xy}=(x,y)\in \wt{\mathcal{E}_X}$, write $\overline{e}:=(y,x)=e_{yx}$. For a vertex $x\in \cV_X$, we sometimes abusively write $x\in X$ when there is no confusion. We say that two vertices $x,y\in X$ are \textbf{adjacent} if $\{x,y\}\in \cE_X$, and in this case we write $x\sim y$. The number of adjacent vertex of a given vertex $x$ is the \textbf{degree} of $x$.

A \textbf{finite chain} from $x_0$ to $x_n$ in a graph $X$ is a finite sequence $x_0,...,x_n$ such that $x_j\sim x_{j+1}$ for all $j=0,...,n-1$ and $x_j\ne x_{j+2}$ for all $j=0,...,n-2$. A finite chain $x_0,...,x_n$ is a \textbf{loop} if $x_0=x_n$. A \textbf{tree} $T$ is a graph if there is no loop, and a $(q+1)$-\textbf{regular tree}, denoted by $T_q$, is a tree such that every vertex has degree $q+1$. 

Let $T$ be a tree. For $I=\N\cup \{0\}$ or $\Z$, a sequence $(x_j)_{j\in I}$ such that $x_j\sim x_{j+1}$ and $x_{j}\ne x_{j+2}$ for all $j\in I$ is called an \textbf{infinite chain} (resp. doubly infinite) if $I=\N\cup\{0\}$ (resp. $I=\Z$). A doubly infinite chain is also called a \textbf{geodesic}. The \textbf{boundary at infinity} $\bd T$ of $T$ consists of equivalence classes of infinite chains $(x_j)_{j\in \N\cup\{0\}}$, where two infinite chains are equivalent if their intersection is infinite. For each vertex $x\in T$, the unique infinite chain starting at $x$ that represents a boundary point $\gamma\in \bd T$ is denoted by $[x,\gamma)$, or alternatively, by $(\gamma(x,t))_{t\in \N\cup\{0\}}$ when necessary. For $\gamma,\eta\in \bd T$, the unique geodesic joining $\gamma$ and $\eta$ is denoted by $L(\gamma,\eta)$.

The set $\overline{T}:=\cV_T\cup\bd T$ carries a compact topology such that $\cV_T$ is dense in $\overline{T}$. A basic open set of the subspace topology of $\bd T\subset \overline{T}$ is of the form $U(x,y)$, which consists of all boundary points represented by infinite chains that start from the vertex $x$ and pass through the vertex $y$. 

For any vertex $x\in T$, denote by $\mu_x$ the unique Borel probability measure on $\bd T$ such that
$$\mu_x(U(x,y))= \frac{1}{q+1}\cdot\left(\frac{1}{q}\right)^{d_T(x,y)-1},$$
where $d_T$ is the combinatorial distance on $T$. For any $x,y\in T$, the measures $\mu_x$ and $\mu_y$ are absolutely continuous with respect to each other. The Radon-Nikodym derivatives are given by 
$$\frac{d\mu_y}{d\mu_x}(\gamma) = q^{B_\gamma(x,y)}:=P(x,y, \gamma),$$
where 
$$B_\gamma(x,y):=\lim_{t\to\infty} d_T(\gamma(y,t), x)-d_T(\gamma(x,t),x)$$
is the \textbf{Busemann function} between $x$ and $y$ with respect to $\gamma$. 

Fix a vertex $z\in \cV_T$ and let $x,y\in \overline{T}$. Let $(x_n)$, $(y_n)$ be sequences in $\cV_T$ that converge to $x$ and $y$ respectively. Consider the limit
$$
(x,y)_z := \lim_{n\to \infty} \frac{1}{2}(d_T(x_n,z)+d_T(y_n,z)-d_T(x_n,y_n))
$$
Then the limit converges and is independent of the choice of $(x_n)$ and $(y_n)$. It is called the \textbf{Gromov product} of $x$ and $y$ at $z$. If $\gamma\in \bd T$, we set $(\gamma,\gamma)_x=+\infty$ for any vertex $x\in T$. For a triple $x =(x_0,x_1,x_2)\in (\overline{T})^3$ of pairwise distinct points, the unique vertex $m_x$ that satisfies
$$
(x_0,x_1)_{m_x} = (x_0,x_2)_{m_x} = (x_1,x_2)_{m_x}=0
$$
is called the \textbf{median} of $x_0,x_1$ and $x_2$. 

Now let $q\geq 2$ and let $T=T_q$ be a $(q+1)$-regular tree. Let $\aut(T)$ be the automorphism group of $T$ equipped with the topology of point-wise convergence. Throughout the paper, whenever $G=\aut(T)$ and $S$ is a subtree of $T$, we denote by $G(S)$ the subgroup of point-wise stabilizers of $S$, and $\wt{G}(S)$ the subgroup of set-wise stabilizers of $S$. The quotient $Q(S):=\wt{G}(S)/G(S)$ can be identified with the automorphism group $\aut(S)$ of $S$, and we denote by 
$$p_S: \wt{G}(S)\longrightarrow Q(S)$$
the canonical projection. 

Finally, for any set $X$, we denote by $D_n(X)$ the subset of $X^n$ consisting of tuples $(x_1,...,x_n)$ such that $x_i\ne x_j$ whenever $i\ne j$. Observe that $D_n(\bd T)$ is a conull subset of $(\bd T)^n$ for all $n\geq 1$. 


\section{Unitary Representations of $\aut(T)$}\label{sec: uni_rep}

Let $T=T_q$ be a ${q+1}$-regular tree ($q\geq 2$). In this section, we recall the classification of irreducible unitary representations of $\aut(T)$ given in the work of Ol'shanski (\cite{Ol77}) and Figa-Talamanca and Nebbia (\cite{F-TN91}). Some of the notations we adapt come from \cite{Ama03}.

\begin{defn}\label{def: finite_complete}
\emph{
We say that a non-empty subtree $S$ in $T_q$ is \textbf{complete} if either it is a vertex, or any vertex in $S$ has degree $1$ or $q+1$. 
}
\end{defn}

In this paper, all unitary representations are assumed to be continuous:

\begin{defn}
\emph{A \textbf{unitary representation} $(\pi, \cH_\pi)$ of a topological group $G$ is a homomorphism $\pi$ from $G$ to the group of unitary operators of a Hilbert space $\cH_\pi$ such that for all $u,v\in \cH_\pi$, the map $g\mapsto \inprod{\pi(g)u,v}_{\cH_\pi}$ is continuous.}
\end{defn}

Now let $G=\aut(T)$. Without much difficulty, one can show that if $(\pi, \cH_\pi)$ is irreducible, then there exists some finite complete subtree $S$ such that the space of $G(S)$-invariant vectors $\cH_\pi^{G(S)}$ is non-trivial (see, for example, Section III.1 in \cite{F-TN91}). We denote by $\cM_\pi$ the set of minimal finite complete subtree $S$ such that $\cH_\pi^{G(S)}\ne 0$. It is known that $G$ acts transitively on $\cM_\pi$ (see, for instance, Lemma 23 in \cite{Ama03}), and so $\cM_\pi$ could be identified with the quotient $G/\wt{G}(S)$ for any $S\in \cM_\pi$. 

The discussion in the last paragraph suggests that any irreducible unitary representation of $G$ belongs to one of the three types in Definition \ref{def: classifi} below:

\begin{defn}\label{def: classifi}
\emph{
We say that an irreducible representation $(\pi, \cH_\pi)$ of $G=\aut(T)$ is
\begin{enumerate}[nosep]
\smallbreak
    \item a \textbf{unitary spherical representation} if $\cM_\pi$ consists of vertices of $T$;
    \item a \textbf{unitary special representation} if $\cM_\pi$ consists of edges of $T$;
    \item a \textbf{unitary cuspidal representation} if $\cM_\pi$ consists of finite complete subtrees of diameter at least 2.
\end{enumerate}
An element $S_0\in \cM_\pi$ is called a \textbf{minimal tree} of $\pi$.
}
\end{defn}

The irreducible unitary representations of $\aut(T)$ was completely classified in the work of Ol'shanski (\cite{Ol77}). A detailed account for the classification of unitary spherical and special representations for all closed subgroups of $\aut(T)$ acting transitively on $\cV_T$ and $\bd T$ can be found in \cite{F-TN91}. We remark here that although our method only applies to the full automorphism group of $T$, the classification in \cite{Ol77} and \cite{F-TN91} was generalized to a class of subgroups with the "independence property" by Amann in \cite{Ama03}.

\subsection{Unitary Spherical Representations and Smooth Vectors}\label{sec: sph_intro}

\subsubsection{Unitary Spherical Representations}
We follow \cite{F-TN91}, Chapter II to recall the classification of unitary spherical representations of $G=\aut(T)$.  

Unitary spherical representations of $G$ are characterized by positive definite spherical functions. From now on, fix a vertex $o\in T$ and let $K$ be the stabilizer subgroup of $o$. 

\begin{defn}\label{def: pos_def}
\emph{A function $f: G\to\C$ is said to be \textbf{positive definite} if for any $n\in \N$, $c_1,...,c_n\in \C$, and $g_1,...,g_n\in G$, one has}
$$\sum_{i,j}c_i\overline{c_j}f(g_ig_j^{-1})\geq 0.$$
\end{defn}

Let $V$ be a vector space. For a function $f: T\to V$ defined on vertices, denote by $Af(x)$ the average of $f$ on the neighbors of $x\in T$.

\begin{defn}\label{def: radial_fcn}
\emph{A function $f: T\to \C$ defined on vertices is \textbf{radial} if $d_T(o,x)=d_T(o,y)$ implies $f(x)=f(y)$. Note that any radial function can be identified with a $K$-bi-invariant function on $G$ under the map $g\mapsto g\cdot o$. A function $\varphi$ is called a \textbf{spherical function} if it is a radial eigenfunction of the averaging operator $A$ such that $\varphi(o)=1$.}
\end{defn}

By Theorem 2.1, Chapter II in \cite{F-TN91}, if $\varphi_z$ is a spherical function that corresponds to the eigenvalue
$\mu(z):= ({q^z+q^{1-z}})/({q+1})$, then $\varphi_z$ is given by the formula
$$\varphi_z(x) = \int_{\bd T} P^z(o,x,\omega)\;d\mu_o(\omega).$$
By Section 5, Chapter II in \cite{F-TN91}, the spherical function $\varphi_z$ is positive definite if and only if $\mu(z) \in[-1,1]$, which holds if and only if $\mathrm{Re}(z)=1/2$, or $0\leq\mathrm{Re}(z)\leq 1$ and $\mathrm{Im}(z)={k\pi}/{\ln (d-1)}$ for some $k\in \Z$. For such $z$, a unitary representation $\pi_z$ of $G$ is defined as follows: Denote by $\cK_z$ the vector space generated by the left translates of $\varphi_z$. If $f_1(g) = \sum_i c_i\varphi_z(g_i^{-1}g)$ and $f_2(g) = \sum_j d_j\varphi_z(g_j^{-1}g)$, then define
$$\inprod{f_1,f_2}_z:= \sum_{i,j} c_i\overline{d_j} \varphi_z(g_i^{-1}g_j)$$
Since $\varphi_z$ is positive definite, the above defines an inner product on $\cK_z$. Let $\cH_z$ be the completion of $\cK_z$ with respect to $\inprod{\,,}_z$. Then $(\pi_z, \cH_z)$ is defined by $\pi_z(h)f(g) = f(h^{-1}g)$ for all $h\in G$, $f\in \cK_z$. 

It is known that every unitary spherical representation is isomorphic to $\pi_z$ for some $z$ with $\mu(z)\in [-1,1]$ (\cite{F-TN91}, Theorem 5.3, Chapter II).  
\begin{defn}\label{def: sph_series}
\emph{The collection of representations $\{\pi_z: \mathrm{Re}(z)=1/2\}$ is called the \textbf{unitary principal series}, and the collection of representations $\{\pi_{s+\frac{k\pi}{\ln q}i}: s\in[0,1], k\in \Z\}$  is called the \textbf{unitary complementary series}.}
\end{defn}

A useful alternative description of $\pi_z$'s was given in Chapter II in \cite{F-TN91} when $\mu(z) \in (-1,1)$, and we briefly outline it here. Let $\cK(\bd T)$ be the vector space of functions on $\bd T$ that are linear combinations of characteristic functions on $U(o,x)$. There is a well-defined intertwining operator $I_z: \cK(\bd T)\longrightarrow \cK(\bd T)$ defined as follows: for $\varphi\in \cK(\bd T)$, the function $I_z\varphi\in \cK(\bd T)$ satisfies
$$\int_{\bd T} P^z(o,x,\gamma)(I_z\varphi)(\gamma)\,d\mu_o(\gamma) = \int_{\bd T}P^{1-z}(o,x,\gamma)\varphi(\gamma)\,d\mu_o(\gamma).$$
For $\varphi, \psi\in \cK(\bd T)$, define
$$(\phi,\psi)_z:=\int_{\bd T} I_z\varphi\cdot\overline{\psi}\;d\mu_o.$$
Using the fact that $\varphi_z$ is positive definite, one can show that $(\,,)_z$ defines an inner product on $\cK(\bd T)$. Denote by $\cH_z'$ the completion of $\cK(\bd T)$ with respect to $(\,,)_z$, and define a unitary representation $(\pi_z',\cH_z')$ by
$$\pi_z'(g)\varphi(\gamma):=P^z(o,go,\gamma)\varphi(g^{-1}\gamma)$$
for all $g\in G$, $\varphi\in \cK(\bd T)$ and $\gamma\in \bd T$. The representation $(\pi_z',\cH_z')$ is unitarily equivalent to $(\pi_z,\cH_z)$. The subspace $\cK_z$ in $\cH_z'$ corresponds to the subspace $\cK(\bd T)$ in $\cH_z'$, and the action of $G$ on $\cK_z$ by left translation corresponds to the action of $G$ on $\cK(\bd T)$ defined by $\pi_z'$. Note that if $\pi_z$ is in the principal series, then $\cH_z'=L^2(\bd T)$.

\subsubsection{An Interlude on Smooth Vectors}

In this subsection, we recall the notion on smooth vectors and representations, which will be relevant to our proof of the spherical cases of Theorem \ref{thm: A}.

Recall that a representation $(\pi, V)$ of $G=\aut(T)$ on a complex inner product space $V$ is \textbf{admissible} if the subspace $V^H$ of vectors fixed by any compact open subgroup $H\leq G$ is finite-dimensional. A vector $v\in V$ is said to be \textbf{smooth} it is fixed by some compact open subgroup. The subspace of smooth vectors is denoted by $V^\infty$. The representation $(\pi, V)$ is \textbf{smooth} if $V=V^\infty$. 
\begin{ex}\label{ex: smooth}
\emph{For $z\in \C$ with $\mu(z)\in (-1,1)$, any vector $\varphi\in \cK(\bd T)$ is smooth in the representation $(\pi_z', \cH_z')$. Indeed, if $x_1,...,x_n\in T$ and $\varphi$ is a linear combination of the characteristic functions $\mathbf{1}_{U(o,x_j)}$, then $\varphi$ is fixed by the compact open subgroup $H=K\cap\bigcap_{j=1}^n\stab_G(x_j)$. Likewise, any vector in $\cK_z$ is smooth in $(\pi_z,\cH_z)$. The representation $(\pi_z', \cK(\bd T))$ (resp. ($\pi_z, \cK_z$)) is a smooth representation.}
\end{ex}

\begin{lem}\label{lem: smooth}
Let $(\pi, V)$ be a smooth unitary representation of $G$ on some complex inner product space $V$. Suppose that $(\bar{\pi}, \overline{V})$ is the admissible unitary representation on the completion $\overline{V}$ of $V$ such that $\bar\pi(g)v = \pi(g)v$ for all $v\in V$, then $V = (\overline{V})^\infty$.
\begin{proof}
It is clear that $V\subset (\overline{V})^\infty$. Conversely, suppose that $v\in \overline{V}$ is fixed by a compact open subgroup $H\leq G$, then $v$ is the limit of a sequence $(v_n)$ in $V$. Consider the projection $\Phi_H: \overline{V}\to \overline{V}^H$ given by
$$\Phi_H(v) := \int_H\bar{\pi}(g)v\; d\mu(g),$$
where $\mu$ is the normalized Haar measure on $K$. Then observe that $\Phi_H(w)\in V^H$ for all $w\in V$. Indeed, since $(\pi, V)$ is smooth, every $w\in V$ is fixed by another compact open subgroup $L$ of $G$. Consider the subgroup $L' := L\cap H$ of $H$. Then the map $K\to V$ defined by $g\mapsto \pi(g)w$ is constant on the left cosets of $L'$. Hence $\Phi_H(w)$ is a finite sum of elements in $V$, so $\Phi_H(w)\in V^H$. To conclude, we have $v = \Phi_H(v)= \lim_{n\to \infty} \Phi_H(v_n)$. The subspace $V^H$ is finite-dimensional in admissibility, and so $(\Phi_H(v_n))$ converges to an element in $V^H$. 
\end{proof}
\end{lem}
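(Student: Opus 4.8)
\textbf{Proof proposal for Lemma \ref{lem: smooth}.}

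The plan is to show the two inclusions $V\subseteq (\overline V)^\infty$ and $(\overline V)^\infty\subseteq V$ separately; the first is immediate from the definition of $(\bar\pi,\overline V)$, so the content lies in the reverse inclusion. Fix $v\in\overline V$ fixed by some compact open subgroup $H\leq G$, and pick a sequence $(v_n)$ in $V$ with $v_n\to v$ in $\overline V$. The idea is to average over $H$: since $H$ is compact we may equip it with its normalized Haar measure $\mu$ and form the projection $\Phi_H(w)=\int_H\bar\pi(g)w\,d\mu(g)$ onto the $H$-fixed subspace $\overline V^H$. Because $v$ is already $H$-fixed we have $\Phi_H(v)=v$, and by continuity of $\bar\pi$ together with boundedness of the averaging operator, $\Phi_H(v_n)\to\Phi_H(v)=v$. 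So it suffices to prove that $\Phi_H(v_n)\in V$ for every $n$, and then that the limit of this sequence of vectors in the finite-dimensional space $\overline V^H$ actually lands in $V$.

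The first point is where smoothness of $(\pi,V)$ enters: each $v_n\in V$ is fixed by \emph{some} compact open subgroup $L_n\leq G$. Then $L_n':=L_n\cap H$ is an open, hence finite-index, subgroup of the compact group $H$, and the map $H\to V$, $g\mapsto\pi(g)v_n$, is constant on left cosets of $L_n'$ in $H$. Therefore the integral defining $\Phi_H(v_n)$ is in fact a finite convex combination $\sum_i \mu(g_iL_n')\,\pi(g_i)v_n$ of vectors of $V$, so $\Phi_H(v_n)\in V$; moreover it is $H$-fixed, so $\Phi_H(v_n)\in V^H:=V\cap\overline V^H$. For the second point, admissibility of $(\bar\pi,\overline V)$ says $\overline V^H$ is finite-dimensional. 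The subspace $V^H\subseteq\overline V^H$ is therefore finite-dimensional as well, hence closed in $\overline V^H$ (all norms on a finite-dimensional space being complete), so the convergent sequence $\Phi_H(v_n)\in V^H$ has its limit $v$ inside $V^H\subseteq V$. This gives $(\overline V)^\infty\subseteq V$ and completes the proof.

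I expect the only genuinely delicate point to be the bookkeeping in the averaging step: one must check that $g\mapsto\bar\pi(g)v_n$ is measurable (continuity of the unitary representation suffices), that $\Phi_H$ is a bounded operator commuting with limits, and that $\Phi_H$ restricted to $V$ genuinely produces the finite sum described — i.e. that coset-constancy of $g\mapsto\pi(g)v_n$ on $H$ reduces the Haar integral to a finite sum with the coset-measure weights. None of this is hard, but it is the part that needs care; the finite-dimensionality/closedness argument at the end is then routine.
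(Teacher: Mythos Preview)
Your proposal is correct and follows essentially the same route as the paper's own proof: both average over $H$ via $\Phi_H$, use smoothness of $V$ to reduce $\Phi_H(v_n)$ to a finite sum of vectors in $V$, and then invoke admissibility so that the finite-dimensional space $\overline V^H$ (hence $V^H$) is closed and contains the limit $v$. Your write-up is in fact slightly more explicit about why $V^H$ is closed and about the continuity of $\Phi_H$, but the underlying argument is identical.
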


Combining Example \ref{ex: smooth} and Lemma \ref{lem: smooth}, we obtain the following corollary:
\begin{cor}\label{cor2: smooth}
Let $z\in \C$ be such that $\mu(z)\in (-1,1)$. Then $\cH_z^\infty = \cK_z$ and $(\cH_z')^\infty=\cK(\bd T)$.
\end{cor}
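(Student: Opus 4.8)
The plan is to combine the two results just stated: Example~\ref{ex: smooth} supplies the smooth representations $(\pi_z, \cK_z)$ and $(\pi_z', \cK(\bd T))$, and Lemma~\ref{lem: smooth} identifies the smooth vectors of the completion of a smooth unitary representation with the original space, provided the completion is admissible. So the corollary follows once I verify the two hypotheses of Lemma~\ref{lem: smooth} in each of the two cases, namely that the completion is admissible and that the ambient representation really is the unitary completion of the smooth one.

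First I would treat $(\pi_z, \cH_z)$. Here $\cK_z$ is the inner-product space spanned by left translates of $\varphi_z$, with inner product $\inprod{\,\cdot\,,\,\cdot\,}_z$, and $\cH_z$ is by construction its completion; the action of $G$ on $\cH_z$ extends the left-translation action on $\cK_z$, so the "ambient representation is the completion" hypothesis of Lemma~\ref{lem: smooth} is immediate. The content is admissibility of $(\pi_z, \cH_z)$: for every compact open subgroup $H \le G$ the fixed subspace $\cH_z^H$ is finite-dimensional. This is where I would invoke the general theory --- $(\pi_z, \cH_z)$ is an irreducible unitary representation of $\aut(T)$, and all such are admissible (for compact open $H$ one reduces to $H = G(S)$ for a finite complete subtree $S$, and finite-dimensionality of $\cH_\pi^{G(S)}$ is exactly the input recalled in Section~\ref{sec: uni_rep} from \cite{F-TN91, Ol77}; alternatively one can cite \cite{Ama03}). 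Granting admissibility, Lemma~\ref{lem: smooth} gives $(\cH_z)^\infty = \cK_z$ directly.

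Next the case $(\pi_z', \cH_z')$ is entirely parallel: by Example~\ref{ex: smooth} the representation $(\pi_z', \cK(\bd T))$ is smooth, and $\cH_z'$ is by definition the completion of $\cK(\bd T)$ with respect to $(\,\cdot\,,\,\cdot\,)_z$, with $\pi_z'$ extending the given action on $\cK(\bd T)$. Since $(\pi_z', \cH_z')$ is unitarily equivalent to $(\pi_z, \cH_z)$ (as recalled in Section~\ref{sec: sph_intro}), it is again admissible, and Lemma~\ref{lem: smooth} yields $(\cH_z')^\infty = \cK(\bd T)$. One small bookkeeping point worth a sentence: the restriction $\mu(z) \in (-1,1)$ is exactly what guarantees $\varphi_z$ is positive definite and $\mu(z) \ne \pm 1$, so that the inner products $\inprod{\,\cdot\,,\,\cdot\,}_z$ and $(\,\cdot\,,\,\cdot\,)_z$ are genuinely positive definite (not merely positive semi-definite) and the completions are the honest Hilbert-space completions to which Example~\ref{ex: smooth} and the classification apply.

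The main --- really the only --- obstacle is pinning down admissibility of $(\pi_z, \cH_z)$ cleanly, since everything else is formal. I would handle it by the remark already made in the text: an irreducible unitary representation $\pi$ of $G$ has $\cH_\pi^{G(S)}$ finite-dimensional for the minimal complete subtrees $S \in \cM_\pi$, and by a standard argument (restricting to the finite-dimensional representation of the compact group $\wt G(S')/G(S')$ on $\cH_\pi^{G(S')}$ for larger $S' \supseteq S$, or directly from the classification) $\cH_\pi^H$ is finite-dimensional for every compact open $H$; hence $\pi_z$ is admissible, and likewise $\pi_z'$. With that in hand the corollary is just "apply Lemma~\ref{lem: smooth} twice."
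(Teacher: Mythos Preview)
Your proposal is correct and is exactly the approach the paper takes: the paper simply says ``Combining Example~\ref{ex: smooth} and Lemma~\ref{lem: smooth}, we obtain the following corollary'' with no further argument, and you have spelled out precisely that combination. Your extra care in verifying the admissibility hypothesis of Lemma~\ref{lem: smooth} (which the paper leaves implicit) is appropriate and correctly handled.
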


\subsection{Unitary Special Representations}
Let $G=\aut(T)$. In this subsection, we follow \cite{F-TN91}, Section 2 in Chapter III to recall the description of unitary special representations of $G$. 

Consider the $G$-module $(\lambda, \ell^2(\wt{\mathcal{E}_T},\C))$ of $\ell^2$-functions on oriented edges on which $G$ acts by left translations $\lambda$. Denote by $\cH_\lambda$ the submodule consisting of all $\ell^2$-functions $f: \wt{\mathcal{E}_T}\to \C$ such that for any vertices $x\in T$, one has
$$\sum_{y\sim x} f(e_{xy})=\sum_{y\sim x}f(e_{yx})=0.$$ 
Fix any $e_0\in \wt{\mathcal{E}_T}$ and choose any inversion $g_0\in G$ such that $g_0e_0=\overline{e_0}$. Then the $(\pm 1)$-eigenspaces $\cH_\lambda^\pm$ of $\lambda(g_0)$ in $\cH_\lambda$ are $G$-invariant, and it was shown that they are the only two unitary inequivalent special representations of $G$ (\cite{F-TN91}, 2.6 in Chapter III).  

\subsection{Induced Representations and Unitary Cuspidal Representations}\label{sec: cuspidal_intro}

To describe the classification of unitary Cuspidal representations of $\aut(T)$ by Ol'shanski, we recall the notion of induced representations. Let $G$ be a unimodular locally compact second countable group and let $H$ be a compact open subgroup. Let $(\omega, V)$ be a unitary representation of $H$ on a Hilbert space $V$. For $1\leq p\leq \infty$, denote by $\cI^p V$ the Banach space of functions $f: G\to V$ such that
\begin{enumerate}[noitemsep]
    \item $f(xh)=\omega(h^{-1})f(x)$ for all $x\in G$ and $h\in H$,
    \item $f$ is bounded if $p = \infty$, and one has
    $$\norm{f}_p^p := \int_G \norm{f(g)}^p_V\, d\mu_G = \sum_{x\in \cR}\norm{f(x)}_V^p<+\infty$$
    if $p\ne \infty$,
\end{enumerate}
where $\mu_G$ is the Haar measure on $G$ with $\mu_G(H)=1$, and $\cR$ is a complete set of representatives for $G/H$. We define the $L^p$-\textbf{induced representation} $L^p\ind_H^G \omega$ of $\omega$ to be the left regular representation $\lambda$ of $G$ on $\cI^pV$. 
If $p=2$, then $\cI^2V$ is a Hilbert space with respect to the following inner product:
$$
\inprod{f,g} := \int_G \inprod{f(x),g(x)}_V \,d\mu_G(x)\quad \forall f,g\in \cI^2V.
$$
The representation $L^2 \ind_H^G \omega$ is then unitary, and we abbreviate it as $\ind_H^G \omega$.

We give a useful alternative description of $L^p\ind_H^G \omega$. Let $s: G/H\to G$ be a set-theoretic section. Define an action $\lambda_s$ of $G$ on the Banach space $\ell^p(G/H, V)$ by
$$
(\lambda_s(g))f(xH) := \omega(s(xH)^{-1}gs(g^{-1}xH))f(g^{-1}xH)
$$
for any $f\in \ell^p(G/H,V)$, $x\in \cR$ and $g\in G$. Then $(\lambda_s, \ell^p(G/H,V))$ is isomorphic to $(\lambda, \cI^pV)$ via the linear isometry $\Phi: \cI^pV\to\ell^p(G/H, V)$ defined by $\Phi(f)(xH):=f(s(xH)).$

Now let $G=\aut(T)$ and let $S$ be a finite complete subtree of diameter at least 2. For a unitary representation $(\omega, V)$ of $Q(S)$, denote by $\wt{\omega}:=\omega\circ p_S$ and write
$$\ind_{S}\,\omega :=\ind_{\wt{G}(S)}^G\wt{\omega}.$$ 
If $S_1,...,S_n$ are the maximal proper complete subtree of $S$, then observe that 
$$G(S)\leq G(S_j)\leq \wt{G}(S)$$
for all $j=1,...,n$. Set $A_j:=p_S(G(S_j))$. We say $(\omega, V)$ of the finite group $Q(S)\simeq \aut(S)$ is \textbf{non-degenerate} if there are no non-trivial $A_j$-invariant vectors for all $j=1,...,n$. 

\begin{thm}[\cite{Ol77}, Theorem 1; See also III.3.13 in \cite{F-TN91} and Theorem 1 in \cite{Ama03}]\label{thm: class_cuspidal}
Let $S_0$ be a finite complete subtree of diameter at least $2$. Then the map
$\omega\mapsto \ind_{S_0}\, \omega$
gives a bijection from the set of equivalence classes of non-degenerate irreducible unitary representations of $Q(S_0)$, to the set of equivalence classes of unitary cuspidal representations of $G$ with minimal tree $S_0$. Moreover, if $\pi$ and $\pi'$ are unitary cuspidal representations of $G$ and $\cM_\pi\ne \cM_{\pi'}$, then $\pi$ and $\pi'$ are inequivalent.
\end{thm}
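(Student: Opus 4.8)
\medskip

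\noindent\emph{Proof strategy.} This is Ol'shanski's theorem; I sketch the shape of the argument one would give. The starting point is that, $S_0$ being finite, $H:=\wt{G}(S_0)$ is a \emph{compact open} subgroup of $G$, so the statement falls squarely within the Mackey machinery for representations induced from compact open subgroups. The plan is to split the proof into three blocks: (1) attach to an arbitrary unitary cuspidal $\pi$ with $S_0\in\cM_\pi$ a non-degenerate representation $\omega$ of $Q(S_0)$ together with an embedding $\pi\hookrightarrow\ind_{S_0}\omega$ (this will give surjectivity of the map); (2) compute $\cH^{G(S_0)}$ of an arbitrary $\ind_{S_0}\omega$ with $\omega$ non-degenerate and recover $\omega$ from it as a $Q(S_0)$-module (this will give injectivity, the fact that $\ind_{S_0}\omega$ is cuspidal with minimal tree $S_0$, and the irreducibility of the $\omega$ produced in (1)); (3) prove that $\ind_{S_0}\omega$ is irreducible whenever $\omega$ is non-degenerate and irreducible, via Mackey's irreducibility criterion. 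Blocks (2) and (3) both reduce to one and the same combinatorial statement about the double cosets $H\backslash G/H$ — equivalently about the relative positions of $S_0$ and $gS_0$ — and that is where I expect essentially all the difficulty to sit.

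\smallskip

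\noindent For block (1): given an irreducible unitary $\pi$ with $S_0\in\cM_\pi$, the space $W:=\cH_\pi^{G(S_0)}$ is non-zero and finite-dimensional (irreducible unitary representations of $\aut(T)$ are admissible, cf.\ \cite{F-TN91}), and $H$ preserves $W$, acting through a representation $\omega$ of $Q(S_0)\simeq\aut(S_0)$. If $W$ contained a non-zero $A_j$-fixed vector $v$, then — since $p_{S_0}(G(S_j))=A_j$ and $\omega$ is $G(S_0)$-trivial — $v$ would be $G(S_j)$-fixed, so $0\neq v\in\cH_\pi^{G(S_j)}$ with $S_j\subsetneq S_0$ complete, contradicting minimality of $S_0$; hence $\omega$, and every subrepresentation of it, is non-degenerate. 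Frobenius reciprocity for the compact open $H$ — legitimate here because cuspidal representations of $\aut(T)$ are square-integrable, so the matrix coefficients in play lie in $\ell^2$ — gives $\Hom_G(\pi,\ind_{S_0}\omega)\simeq\Hom_H(\pi|_H,\wt{\omega})$; since $\wt{\omega}$ is $G(S_0)$-trivial, such an $H$-map annihilates the orthocomplement of $W$, hence corresponds to a point of $\Hom_{Q(S_0)}(\omega,\omega)\ni\id$. So $\pi$ embeds in $\ind_{S_0}\omega=\bigoplus_i\ind_{S_0}\omega_i$ taken over the irreducible constituents $\omega_i$ of $\omega$; granting block (3), $\pi\simeq\ind_{S_0}\omega_{i_0}$ for some non-degenerate irreducible $\omega_{i_0}$.

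\smallskip

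\noindent For blocks (2) and (3): realize $\ind_{S_0}\omega$ on $\ell^2(G/H,V)$. Mackey's description expresses $\cH^{G(S)}$ as a sum over the $G(S)$-orbits in $G/H$, the orbit of $gH$ contributing a copy of the subspace of $V$ fixed by $\omega(p_{S_0}(g^{-1}G(S)g\cap H))$. For $S=S_0$ and $g\notin H$ one identifies $p_{S_0}(g^{-1}G(S_0)g\cap H)$ with the pointwise stabilizer in $\aut(S_0)$ of the subtree $S_0\cap gS_0$, or — when $S_0\cap gS_0=\varnothing$ — with the stabilizer of the (necessarily leaf) vertex of $S_0$ lying on the geodesic toward $gS_0$. \textbf{The crux, and the step I expect to be the real obstacle, is the combinatorial claim that for $g\notin H$ the subtree $S_0\cap gS_0$ (resp.\ that vertex) is contained in some maximal proper complete subtree $S_j$ of $S_0$}, i.e.\ that it misses an entire head of $S_0$; this is precisely where the completeness of the subtrees and the anatomy of heads and centipedes get used, and it is the heart of the matter. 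Granting it, the pointwise stabilizer above contains $A_j$, so non-degeneracy of $\omega$ kills every non-trivial double coset and $\cH_{\ind_{S_0}\omega}^{G(S_0)}\simeq\omega$ as a $Q(S_0)$-module. This gives injectivity of the map; it forces $\omega=\omega_{i_0}$ in block (1) by comparing dimensions; and, together with the elementary fact that a proper complete subtree of $S_0$ has strictly smaller diameter than $S_0$ — so that $G(S')$ fixes no point of $G/H$ when $S'\subsetneq S_0$, whence $\cH_{\ind_{S_0}\omega}^{G(S')}=0$ — it shows $S_0$ is a minimal tree of $\ind_{S_0}\omega$; transitivity of $G$ on minimal trees then gives $\cM_{\ind_{S_0}\omega}=G\cdot S_0$, so $\ind_{S_0}\omega$ is indeed cuspidal with minimal tree $S_0$. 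For block (3), Mackey's irreducibility criterion requires $\wt{\omega}$ and its conjugate $\wt{\omega}^{\,g}$ to be disjoint over $H\cap gHg^{-1}$ for $g\notin H$; since $\wt{\omega}$ is trivial on $G(S_0)$, an intertwiner is forced to have image inside the $\wt{\omega}$-invariants of $G(g^{-1}S_0)\cap H$, which the same combinatorial claim places inside a subspace of the form $V^{A_j}=0$, so there are no non-zero intertwiners. Finally, the ``moreover'' clause is immediate, since $\cM_\pi$ is manifestly an isomorphism invariant of $\pi$.
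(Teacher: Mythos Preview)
The paper does not give a proof of this theorem: it is stated with attribution to Ol'shanski \cite{Ol77} (with pointers to \cite{F-TN91}, III.3, and \cite{Ama03}) and used as a black box thereafter. There is therefore nothing in the paper to compare your argument against.

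That said, your sketch is essentially the standard route taken in those references: Frobenius reciprocity for the compact open $H=\wt G(S_0)$ to produce $\omega$ from $\pi$, and Mackey's analysis of $H\backslash G/H$ to prove irreducibility and recover $\omega$ from $\ind_{S_0}\omega$. You have correctly located the heart of the matter in the combinatorial control of $p_{S_0}(g^{-1}G(S_0)g\cap H)$ for $g\notin H$. One small slip: your argument that $S_0$ is minimal (``$G(S')$ fixes no point of $G/H$ when $S'\subsetneq S_0$'') is not right as stated --- for instance if $S'=S_j$ is a maximal proper complete subtree then elements of $G(S_j)$ can preserve $S_0$ setwise, so $G(S_j)$-orbits in $G/H$ need not be free. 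What one actually checks is that for \emph{every} coset $gH$ (including $g=e$) the image $p_{S_0}(g^{-1}G(S_j)g\cap H)$ contains some $A_i$, so that non-degeneracy kills each orbit's contribution to $\cH^{G(S_j)}$; for $g=e$ this image is exactly $A_j$, and for $g\notin H$ it follows from the same combinatorial claim you flagged. Also, invoking square-integrability of cuspidal $\pi$ in block (1) is fine in the order of exposition of \cite{F-TN91}, but be aware that in Ol'shanski's original treatment this is part of what gets proved, so one should be careful about circularity depending on which source one follows.
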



\section{Continuous Bounded Cohomology}\label{sec: coh}

Let $G$ be a topological group. A \textbf{Banach} $G$-module $E$ is a Banach space on which $G$ acts by linear isometries. The \textbf{continuous bounded cohomology} $\Hcb^*(G, E)$ of $G$ with coefficient in $E$ is the cohomology of the complex
$$0 \longrightarrow \bC(G, E)^G\overset{d}\longrightarrow \bC(G^2, E)^G\overset{d}\longrightarrow\bC(G^3,E)^G\overset{d}\longrightarrow \; ...$$
of $G$-equivariant bounded continuous maps from $G^{*+1}$ to $E$, where the coboundary operator $d: \bC(G^{n},E)^G\to\bC(G^{n+1},E)^G$ is defined by 
\begin{equation}\label{eq: coboundary}
    df(x_0,...,x_n) :=\sum_{j=0}^n(-1)^jf(x_0,...,\widehat{x_j},...,x_n),
\end{equation}
In this section, we collect necessary facts from continuous bounded cohomology. We refer unfamiliar readers to \cite{Mon01} for a detailed account of the theory of continuous bounded cohomology. 

Let $Z$ be a regular $G$-space (see \cite{Mon01}, Section 2.1), and let $E$ be a coefficient $G$-module (see \cite{Mon01}, Section 1.2). Let $L_\mathrm{w*}^\infty(Z, E)$ (resp. $L^\infty(Z,E)$) be the Banach space which consists of classes of essentially bounded measurable maps with respect to the Borel $\sigma$-algebra of the weak*-topology (resp. the norm topology) on $E$. Note that if $E$ is a separable Hilbert space, then $L_\mathrm{w*}^\infty(Z, E)=L^\infty(Z,E)$. In the following theorem by Burger and Monod, a \textbf{regular amenable $G$-space} is a {regular $G$-space} on which the $G$-action is amenable in the sense of Zimmer (\cite{Zim84}, Section 4.3), and the boundary operator $d$ is defined in the same way as (\ref{eq: coboundary}).

\begin{thm}[\cite{BM02b}, Theorem 2; \cite{Mon01}, Theorem 7.5.3]\label{thm: BM02b}
Let $G$ be a locally compact second countable group. If $Z$ is an amenable regular $G$-space and $E$ is a coefficient $G$-module, then the continuous bounded cohomology $\Hcb^*(G, E)$ is canonically isometrically isomorphic to the cohomology of the complex
$$
0 \longrightarrow L_{\mathrm{w*}}^\infty(Z, E)^G\overset{d}\longrightarrow L_{\mathrm{w*}}^\infty(Z^2, E)^G\overset{d}\longrightarrow L_{\mathrm{w*}}^\infty(Z^3, E)^G\longrightarrow \; ...
$$
of $G$-equivariant cochains in $L_{\mathrm{w*}}^\infty(G^{*}, E)$. The same statement holds for the subcomplex $(L_{\mathrm{w*},\alt}^\infty(Z^{*}, E)^G,d)$ of alternating cochains.
\end{thm}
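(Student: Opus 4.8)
The plan is to deduce the statement from the general relative homological algebra of continuous bounded cohomology, following \cite{Mon01}. Recall the guiding principle: if $(E^\bullet, d^\bullet)$ is any \emph{strong} resolution of the coefficient module $E$ by \emph{relatively injective} Banach $G$-modules, then the cohomology of the complex of $G$-invariants $\bigl((E^\bullet)^G, d^\bullet\bigr)$ is canonically and isometrically isomorphic to $\Hcb^*(G,E)$ --- the latter being, by definition, the cohomology of the $G$-invariants of the standard resolution $\bC(G^{\bullet+1},E)$, which is itself strong and relatively injective. Thus it suffices to prove two assertions: (i) the complex $\bigl(L^\infty_{\mathrm{w*}}(Z^{\bullet+1},E), d\bigr)$, augmented by the inclusion $E\hookrightarrow L^\infty_{\mathrm{w*}}(Z,E)$ of constant classes, is a strong resolution; (ii) each $L^\infty_{\mathrm{w*}}(Z^{n+1},E)$ is a relatively injective Banach $G$-module. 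The comparison/uniqueness statement of homological algebra then produces the canonical isometric isomorphism, and a separate argument with antisymmetrization handles the alternating subcomplex.

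For (i) I would fix a probability measure $\nu$ on $Z$ inside the given quasi-invariant measure class (available since $Z$ is a regular $G$-space) and define the contracting homotopy by integrating out the first coordinate: $(h^n f)(z_1,\dots,z_n) = \int_Z f(z,z_1,\dots,z_n)\,d\nu(z)$ for $f\in L^\infty_{\mathrm{w*}}(Z^{n+1},E)$, together with $h^0 = \int_Z(\cdot)\,d\nu\colon L^\infty_{\mathrm{w*}}(Z,E)\to E$. A direct manipulation of the formula \eqref{eq: coboundary} gives $d\circ h + h\circ d=\id$ in all degrees, including at the augmentation level (where one uses that $\nu$ is a probability measure), and each $h^n$ has operator norm $\le 1$; the $h^n$ are not $G$-equivariant, but that is exactly what a \emph{strong} resolution allows. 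The only genuine care here is measure-theoretic: that the Gelfand--Pettis integral of a bounded weak-$*$ measurable $E$-valued function is well defined on $L^\infty_{\mathrm{w*}}$-classes, takes values in the right space, and obeys Fubini --- all routine for a coefficient module $E$.

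For (ii), note first that a finite power $Z^{n+1}$ with the diagonal action is again an amenable regular $G$-space (amenability of $G\curvearrowright Z$ in Zimmer's sense is preserved under taking products with an arbitrary Borel $G$-space), so it is enough to show that $L^\infty_{\mathrm{w*}}(S,E)$ is relatively injective whenever $S$ is an amenable regular $G$-space. This is where the hypothesis is really used: amenability of the action supplies, for every admissible injection $\iota\colon A\hookrightarrow B$ of Banach $G$-modules (i.e. split by a norm-$\le 1$ Banach retraction) and every $G$-morphism $\alpha\colon A\to L^\infty_{\mathrm{w*}}(S,E)$, a norm-controlled $G$-equivariant extension $\beta\colon B\to L^\infty_{\mathrm{w*}}(S,E)$ with $\beta\circ\iota=\alpha$; the construction is built from the $G$-equivariant conditional expectation (averaging) operators that characterize Zimmer-amenable actions. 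I expect this step to be the main obstacle: it is the substantive content of \cite[Theorem 2]{BM02b} (see also \cite{Mon01}), and it requires delicate bookkeeping with weak-$*$ measurable vector-valued functions and with the amenability characterization from \cite{Zim84}.

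Granting (i) and (ii), relative injectivity lets one extend $\id_E$ to a $G$-morphism of resolutions in each direction between $\bC(G^{\bullet+1},E)$ and $L^\infty_{\mathrm{w*}}(Z^{\bullet+1},E)$; uniqueness of such extensions up to $G$-homotopy makes the induced map on the cohomology of $G$-invariants canonical and independent of choices, and the norm estimates built into the homotopies make it isometric. This gives $\Hcb^*(G,E)\cong H^*\bigl(L^\infty_{\mathrm{w*}}(Z^{\bullet+1},E)^G\bigr)$. Finally, for the alternating subcomplex, I would observe that the antisymmetrization operator $\Alt$ is a $G$-equivariant chain projection of norm $\le 1$ of $L^\infty_{\mathrm{w*}}(Z^{\bullet+1},E)$ onto $L^\infty_{\mathrm{w*},\alt}(Z^{\bullet+1},E)$ commuting with $d$; hence the alternating complex is a $G$-equivariant retract of a strong resolution by relatively injective modules, so it is itself such a resolution, and the same comparison theorem applies verbatim to yield the isometric identification on the alternating side.
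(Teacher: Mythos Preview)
The paper does not give its own proof of this theorem: it is stated with attribution to \cite{BM02b} and \cite{Mon01} and used as a black box (immediately specialized in Corollary~\ref{cor: reso_bd}). Your outline is a faithful sketch of the argument in those references, so in that sense it is ``the same approach'' as what the paper invokes.

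One small point worth tightening: for the alternating statement, saying that $\Alt$ is a $G$-equivariant chain retraction of norm $\le 1$ shows the alternating subcomplex is a direct summand as a complex of $G$-modules, but you should say explicitly why that summand is still a \emph{strong} resolution (the contracting homotopy $h^\bullet$ you built is not alternating). The fix is standard---conjugate $h^\bullet$ by $\Alt$, or simply note that a $G$-retract of a strong relatively injective resolution is again one---but it is worth stating, since ``retract of a resolution'' is the content, not just ``retract of a complex''.
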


We will use the following special case of Theorem \ref{thm: BM02b}:
\begin{cor}\label{cor: reso_bd}
Let $T=T_q$ be the $(q+1)$-regular tree and let $E$ be a coefficient $\aut(T)$-module. Then the continuous bounded cohomology $\Hcb^*(\aut(T), E)$ is canonically isometrically isomorphic to the cohomology of the complex
$$
0 \longrightarrow L_{\mathrm{w*,\alt}}^\infty(\bd T, E)^{\aut(T)}\overset{d}\longrightarrow L_{\mathrm{w*},\alt}^\infty((\bd T)^2, E)^{\aut(T)}\overset{d}\longrightarrow L_{\mathrm{w*}, \alt}^\infty((\bd T)^3, E)^{\aut(T)}\longrightarrow \; ...
$$
of $\aut(T)$-equivariant, alternating cochains in $L_{\mathrm{w*}}^\infty((\bd T)^{*}, E)$.
\begin{proof}
Let $\gamma\in \bd T$ and let $P\leq \aut(T)$ be the stabilizer of $\gamma$. Then $\bd T$ can be identified with the homogeneous space $G/P$ as regular $\aut(T)$-spaces. Since $P$ is amenable (\cite{F-TN91}, Section I.8), the action of $G$ on $G/P$ is amenable in the sense of Zimmer (\cite{Zim84}, Proposition 4.3.2). The proof is complete by Theorem \ref{thm: BM02b}.
\end{proof}
\end{cor}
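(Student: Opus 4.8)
The statement to prove is Corollary \ref{cor: reso_bd}, which specializes Theorem \ref{thm: BM02b} to $G = \aut(T)$ acting on $\bd T$.

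\medskip

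The plan is to exhibit $\bd T$ as an amenable regular $G$-space and then quote Theorem \ref{thm: BM02b} directly. First I would fix a boundary point $\gamma \in \bd T$ and let $P = \wt{G}(\{\gamma\}) \le G$ be its stabilizer. Since $G = \aut(T)$ acts transitively on $\bd T$ (any geodesic ray can be mapped to any other by an automorphism of the regular tree), the orbit map $g \mapsto g\gamma$ induces a $G$-equivariant bijection $G/P \to \bd T$, and one checks this is an isomorphism of Borel $G$-spaces: the quotient topology on $G/P$ agrees with the topology on $\bd T \subset \overline{T}$ described in Section \ref{sec: prelim}, both being second countable and compact, so the continuous equivariant bijection is a homeomorphism and in particular a Borel isomorphism. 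Hence $\bd T$ is a regular $G$-space in the sense of \cite{Mon01}, Section 2.1 (with, say, any of the probability measures $\mu_x$ as the quasi-invariant measure — the $\mu_x$ are all mutually absolutely continuous, so the measure class is canonical).

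\medskip

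Next I would verify amenability of the action. The key input is that $P$ is an amenable (closed) subgroup of $G$: this is recorded in \cite{F-TN91}, Section I.8 — the stabilizer of a boundary point is a compact-by-(abelian/amenable) extension, hence amenable. By \cite{Zim84}, Proposition 4.3.2, the action of a locally compact group $G$ on a homogeneous space $G/P$ is amenable in the sense of Zimmer whenever $P$ is an amenable closed subgroup. Therefore $\bd T \cong G/P$ is an amenable regular $G$-space. Note that $G = \aut(T)$, with the topology of pointwise convergence, is locally compact (the stabilizer $K$ of a vertex is compact and open) and second countable, so the hypotheses of Theorem \ref{thm: BM02b} are met.

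\medskip

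Finally I would apply Theorem \ref{thm: BM02b} with $Z = \bd T$ and $E$ the given coefficient $G$-module, and invoke the last sentence of that theorem (the alternating subcomplex computes the same cohomology isometrically) to conclude that $\Hcb^*(\aut(T), E)$ is canonically isometrically isomorphic to the cohomology of the complex of $\aut(T)$-equivariant alternating cochains in $L^\infty_{\mathrm{w*}}((\bd T)^{*+1}, E)$, with the coboundary given by (\ref{eq: coboundary}). There is essentially no obstacle here; the only point requiring a word of justification is the identification $\bd T \cong G/P$ as regular $G$-spaces and the amenability of $P$, both of which are standard facts about trees and their boundaries, cited above.
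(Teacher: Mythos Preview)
Your proposal is correct and follows essentially the same route as the paper: identify $\bd T$ with $G/P$ for $P$ the stabilizer of a boundary point, cite the amenability of $P$ from \cite{F-TN91} and the amenability of the homogeneous action from \cite{Zim84}, and then apply Theorem \ref{thm: BM02b} (including its alternating statement). You simply spell out a few more details---transitivity of $G$ on $\bd T$, the regular $G$-space structure, and that $G$ is locally compact second countable---than the paper's terse proof does.
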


We also recall that the continuous bounded cohomology $\Hcb^*(G, E)$ of a topological group $G$ vanishes in all positive degrees whenever $E$ is a relatively injective $G$-module (\cite{Mon01}, Proposition 7.4.1). This leads to the following vanishing result for $L^\infty$-inductions:

\begin{lem}\label{lem: rel_inj}
Let $G$ be a unimodular locally compact second countable group and $H$ be a compact open subgroup. Let $(\omega, V)$ be a unitary representation of $H$. Then the Banach $G$-module $L^\infty\ind_H^G \omega$ is relatively injective. In particular, one has $\Hcb^n(G, L^\infty\ind_H^G \omega)=0$ for all $n\geq 1$.
\begin{proof}
   Since the Banach $G$-module $L^\infty(G,V)$ with the left regular representation is relatively injective (\cite{Mon01}, Corollary 4.4.5), it is enough to show that the inclusion $\cI^\infty V\xhookrightarrow{i} L^\infty(G,V)$ admits a $G$-equivariant left inverse of norm $1$ by Proposition 4.3.1 in \cite{Mon01}. To this end, define
   $$r(f)(x):= \int_H\omega(h)f(xh)\,d\mu_H(h)\in V $$
   for each $f\in L^\infty(G, V)$ and $x\in G$, where $\mu_H$ is the Haar measure on $H$ with $\mu_H(H)=1$. Then for any $h_0\in H$ and $x\in G$, we have
   \begin{align*}
       r(f)(xh_0) &= \int_H \omega(h)f(xh_0h)d\mu_H(h)
       = \int_H \omega(h_0^{-1}h_0h)f(xh_0h)d\mu_H(h)\\
       &= \int_H \omega (h_0^{-1}h)f(xh)d\mu_H(h)
       = \omega (h_0^{-1}) \int_H\omega(h)f(xh)\,d\mu_H(h)\\
       &= \omega(h_0^{-1})r(f)(x).
   \end{align*}
Thus, we have a well-defined $G$-equivariant map $r: L^\infty(G,V)\to \cI^\infty V$. It is evident that $\norm{r(f)}_\infty \leq \norm{f}_\infty$ for all $f\in L^\infty(G,V)$ and $r\circ i=\id_{\cI^\infty V}$, which completes the proof.
\end{proof}
\end{lem}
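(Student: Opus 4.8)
The plan is to exhibit $L^\infty\ind_H^G\omega$ as a norm-non-increasing $G$-equivariant retract of a Banach $G$-module that is already known to be relatively injective, and then invoke the two standard facts from \cite{Mon01}: that such a retract of a relatively injective module is relatively injective (Proposition 4.3.1), and that $\Hcb^n(G,-)$ vanishes for $n\geq 1$ on relatively injective modules (Proposition 7.4.1).

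Unwinding the definition, $L^\infty\ind_H^G\omega$ is the module $\cI^\infty V$ of (classes of) essentially bounded measurable $f\colon G\to V$ with $f(xh)=\omega(h^{-1})f(x)$, and this is a closed $G$-submodule of $L^\infty(G,V)$ equipped with the left regular representation; the latter is relatively injective by \cite{Mon01}, Corollary 4.4.5. It therefore suffices to produce a $G$-morphism $r\colon L^\infty(G,V)\to\cI^\infty V$ of norm at most $1$ with $r|_{\cI^\infty V}=\id$. The natural candidate is the $H$-averaging operator
$$ r(f)(x):=\int_H\omega(h)\,f(xh)\,d\mu_H(h), $$
where $\mu_H$ is the normalised Haar measure on the compact open group $H$. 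Four checks are needed: the substitution $h\mapsto h_0h$ gives $r(f)(xh_0)=\omega(h_0^{-1})r(f)(x)$, so $r(f)$ indeed lies in $\cI^\infty V$; unitarity of $\omega$ together with $\mu_H(H)=1$ yields $\norm{r(f)}_\infty\leq\norm{f}_\infty$; left translation acts only on the $x$-argument while the average is taken over the right, so $r$ commutes with the $G$-action; and for $f\in\cI^\infty V$ the integrand equals the constant $\omega(h)\omega(h^{-1})f(x)=f(x)$, whence $r$ composed with the inclusion is the identity.

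In order, the steps are: (1) record the isometric $G$-embedding $i\colon\cI^\infty V\hookrightarrow L^\infty(G,V)$; (2) cite relative injectivity of $L^\infty(G,V)$; (3) define $r$ and verify the four properties above; (4) apply the retract criterion to conclude $\cI^\infty V$ is relatively injective; (5) deduce $\Hcb^n(G,L^\infty\ind_H^G\omega)=0$ for all $n\geq1$. No serious obstacle is expected here; the only point requiring a little care is that $r$ is well defined on $L^\infty$-classes — that the pointwise integral over $H$ respects almost-everywhere equality and returns a genuinely measurable, essentially bounded representative — and that its image lies in the closed submodule $\cI^\infty V$ and not merely in $L^\infty(G,V)$; both are routine since $H$ is compact and $\omega$ is continuous.
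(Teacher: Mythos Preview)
Your proposal is correct and follows essentially the same approach as the paper: both embed $\cI^\infty V$ into the relatively injective module $L^\infty(G,V)$ (citing \cite{Mon01}, Corollary 4.4.5), define the same averaging retraction $r(f)(x)=\int_H\omega(h)f(xh)\,d\mu_H(h)$, verify the $H$-equivariance, norm bound, $G$-equivariance and retract property, and conclude via Propositions 4.3.1 and 7.4.1 of \cite{Mon01}. Your brief remark on well-definedness of $r$ at the level of $L^\infty$-classes is a small addition not spelled out in the paper, but otherwise the arguments coincide.
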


Finally, we recall the following vanishing result for degree 1:

\begin{prop}[\cite{Mon01}, Proposition 6.2.1]\label{prop: degree_1}
Let $G$ be a topological group. Then 
$$\Hcb^1(G,E)=0$$
for any reflexive Banach $G$-module $E$.
\end{prop}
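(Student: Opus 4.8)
The plan is to reduce the statement, via the standard (inhomogeneous) bar resolution computing $\Hcb^*$, to a fixed-point problem for an affine isometric action. Recall that $\Hcb^1(G,E)$ is canonically identified with the space of continuous bounded $1$-cocycles $b\colon G\to E$ — maps satisfying $b(gh)=b(g)+g\cdot b(h)$ and $\sup_{g\in G}\norm{b(g)}<\infty$ — modulo the coboundaries $g\mapsto g\cdot v-v$ with $v\in E$ (this is the usual passage from the homogeneous complex $\bC(G^{*+1},E)^G$ to the inhomogeneous one, using that $G$ acts on $E$ by isometries so that boundedness is preserved under this dictionary). So it suffices to show that every continuous bounded cocycle $b$ has this coboundary form.

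Given such a $b$, the cocycle identity says exactly that $\alpha(g)w:=g\cdot w+b(g)$ defines an action of $G$ on $E$ by affine isometries, and the $\alpha$-orbit of $0$ is precisely $\{b(g):g\in G\}$, which is bounded by hypothesis. Let $C:=\overline{\mathrm{conv}}\,\{b(g):g\in G\}$. Since $E$ is reflexive and $C$ is bounded, closed and convex, $C$ is weakly compact; since affine isometries carry convex hulls to convex hulls and are homeomorphisms for the norm topology, $C$ is $\alpha(G)$-invariant, and each $\alpha(g)|_C$ is moreover continuous for the weak topology (it is the composite of a bounded linear map with a translation). Thus $\{\alpha(g)|_C:g\in G\}$ is a group of weakly continuous affine isometries of the nonempty weakly compact convex set $C$.

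At this point I would invoke the Ryll–Nardzewski fixed-point theorem: a group of affine isometries is automatically noncontracting (for distinct $x,y\in C$ one has $\norm{\alpha(g)x-\alpha(g)y}=\norm{x-y}>0$ for every $g$), so there exists $w_0\in C$ fixed by all the $\alpha(g)$. Unwinding $\alpha(g)w_0=w_0$ gives $b(g)=w_0-g\cdot w_0=g\cdot(-w_0)-(-w_0)$, that is, $b$ is the coboundary of $v:=-w_0$; hence $\Hcb^1(G,E)=0$. One should also check that $g\mapsto g\cdot(-w_0)$ genuinely defines an element of $\bC(G,E)^G$, i.e. that the orbit map is continuous and bounded — boundedness is immediate since $G$ acts isometrically, and continuity is part of the standing assumption that $E$ is a continuous Banach $G$-module.

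The heart of the argument — and the only place reflexivity enters — is the fixed-point step: weak compactness of bounded closed convex sets is precisely what makes Ryll–Nardzewski applicable, and it is what lets the conclusion hold for an arbitrary topological group $G$ with no amenability hypothesis. For groups with extra structure (amenable groups, or the group $\aut(T)$ of this paper) one could instead manufacture the fixed point by an ergodic-averaging argument on $C$, but the Ryll–Nardzewski route gives the cleanest uniform statement. The main obstacle to be careful about is therefore simply the honest verification of the Ryll–Nardzewski hypotheses — weak compactness of $C$, weak continuity of the affine maps, and the noncontracting property — all of which are routine here, together with the continuity bookkeeping for the primitive $v$.
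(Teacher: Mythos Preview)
The paper does not actually prove this proposition; it simply quotes it from Monod's monograph. Your argument via the Ryll--Nardzewski fixed-point theorem is correct and is essentially the standard proof (and the one given in the cited reference): bounded $1$-cocycles correspond to affine isometric actions with bounded orbits, reflexivity makes the closed convex hull of an orbit weakly compact, and any common fixed point exhibits the cocycle as a coboundary.

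One small inaccuracy worth flagging: you justify the continuity of $g\mapsto g\cdot(-w_0)$ by invoking that $E$ is a \emph{continuous} Banach $G$-module, but the proposition as stated does not assume this --- only that $E$ is reflexive. This gap is easily closed without any extra hypothesis: the fixed-point equation $\alpha(g)w_0=w_0$ rewrites as $g\cdot(-w_0)=b(g)-w_0$, and since $b$ is continuous by assumption, the orbit map of $-w_0$ is continuous as well. With this adjustment your proof is complete.
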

Thus, it is enough for us to work with degree 2 and beyond. 


\section{Almost Everywhere Identities}\label{sec: ae}

The cochain complex in Corollary \ref{cor: reso_bd} consists of $G$-equivariant and alternating function classes. To allow us to work with actual $G$-equivariant and alternating functions rather than function classes, we collect in this section a few almost everywhere identities. 

\begin{lem}[\cite{Zim84}, Proposition B.5]\label{lem: a.e.zimmer}
Let $G$ be a locally compact second countable group. Suppose that $(X,\mu)$ is a standard Borel $G$-space with quasi-invariant measure, and $Y$ is a standard Borel $G$-space. Suppose that $f: X\to Y$ is a Borel measurable function such that for any $g\in G$, one has $f(g\cdot x) = g\cdot f(x)$ for almost all $x\in X$, then there exists a $G$-invariant conull subset $X_0\subset X$ and a Borel measurable map $\wt{f}: X_0\to Y$ such that 
\begin{enumerate}[nosep]
    \item $\wt{f}$ is $G$-equivariant, and
    \item $f=\wt{f}$ almost everywhere on $X_0$.
\end{enumerate} 
\end{lem}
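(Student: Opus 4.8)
The plan is to run the standard Fubini straightening argument for measurable equivariant maps. First I would form the ``defect set'' $E := \{(g,x)\in G\times X : f(g\cdot x)\ne g\cdot f(x)\}$, which is Borel in $G\times X$ because $f$ and the two action maps are Borel and $Y$ is standard Borel. The hypothesis says exactly that every vertical slice $E_g=\{x:(g,x)\in E\}$ is $\mu$-null, so by Fubini, applied to the product of a left Haar measure on $G$ with $\mu$, the set $E$ is null; therefore for $\mu$-almost every $x$ the horizontal slice $E^x=\{g:(g,x)\in E\}$ is Haar-null. Calling $X_1$ the conull Borel set of such $x$, we have: for every $x\in X_1$, $f(g\cdot x)=g\cdot f(x)$ for almost every $g\in G$.

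Next I would upgrade $X_1$ to a genuinely $G$-invariant conull set. Writing $N=X\setminus X_1$, quasi-invariance of $\mu$ gives $\int_X \lvert\{g: g\cdot x\in N\}\rvert\, d\mu(x)=\int_G \mu(g^{-1}N)\,dg=0$, so $X_0:=\{x\in X: g\cdot x\in X_1 \text{ for a.e.\ } g\in G\}$ is conull; and since ``a.e.\ on $G$'' is stable under right translation (the modular function is everywhere positive), $X_0$ is $G$-invariant. On $X_0$ I would define the straightened map as follows. For $x\in X_0$ the Borel map $\psi_x:G\to Y$, $\psi_x(g):=g^{-1}\cdot f(g\cdot x)$, satisfies $\psi_x(hg)=\psi_x(g)$ for almost every pair $(h,g)$: for a.e.\ $g$ one has $g\cdot x\in X_1$, and applying the defining property of $X_1$ at the point $g\cdot x$ gives $f((hg)\cdot x)=h\cdot f(g\cdot x)$ for a.e.\ $h$, whence $\psi_x(hg)=(hg)^{-1}h\cdot f(g\cdot x)=\psi_x(g)$. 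A further Fubini argument then shows $\psi_x$ is almost everywhere equal to a unique constant, which I declare to be $\wt f(x)$. Equivalently, fixing once and for all a probability measure $\nu$ on $G$ equivalent to Haar measure, we have $(\psi_x)_*\nu=\delta_{\wt f(x)}$ for every $x\in X_0$.

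Finally I would verify the three claimed properties and argue Borel measurability. Equality $\wt f=f$ a.e.\ holds because on the conull set $X_0\cap X_1$ one has $\psi_x\equiv f(x)$ almost everywhere. Genuine $G$-equivariance holds because $\psi_{h\cdot x}(g)=g^{-1}\cdot f(gh\cdot x)=h\cdot(gh)^{-1}\cdot f((gh)\cdot x)=h\cdot\psi_x(gh)$, and right translation by $h$ preserves the measure class, so the a.e.-constant value of $\psi_{h\cdot x}$ is $h$ times that of $\psi_x$, i.e.\ $\wt f(h\cdot x)=h\cdot\wt f(x)$ for all $h\in G$ and all $x\in X_0$. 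For Borel measurability I would note that $(x,g)\mapsto\psi_x(g)$ is Borel, hence $x\mapsto(\psi_x)_*\nu$ is a Borel map $X_0\to\mathrm{Prob}(Y)$, and that $y\mapsto\delta_y$ is a Borel isomorphism of $Y$ onto a Borel subset of $\mathrm{Prob}(Y)$ (this is where standardness of $Y$ enters, via Lusin--Souslin); composing with its inverse exhibits $\wt f$ as Borel. The main obstacle, and the only step requiring real care, is precisely this last point: $\wt f$ is defined by an almost-everywhere condition, so producing an honest Borel function rather than merely a class forces the disintegration/measurable-selection detour, and in parallel one must be careful to obtain a literally $G$-invariant conull domain, not just one invariant modulo null sets.
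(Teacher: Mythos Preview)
Your argument is correct and is precisely the standard straightening argument. Note, however, that the paper does not supply its own proof of this lemma: it is simply quoted as Proposition~B.5 of Zimmer's book \cite{Zim84}, so there is nothing to compare against beyond observing that your proof is the one found there (Fubini on the defect set, saturation to a $G$-invariant conull domain, and averaging $g\mapsto g^{-1}f(gx)$ to define the strict representative). Your care about the Borel measurability of $\wt f$ via $x\mapsto(\psi_x)_*\nu\in\mathrm{Prob}(Y)$ is fine; one can also argue more directly that for each Borel $B\subset Y$ the set $\wt f^{-1}(B)=\{x\in X_0:\nu(\psi_x^{-1}(B^c))=0\}$ is Borel by Fubini, which avoids the Lusin--Souslin detour.
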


Let $X$ be a topological space and let $\mu$ be a measure defined on the Borel $\sigma$-algebra $\cB_X$ of $X$. We say that a map $f: X\to E$ between $X$ and a dual Banach space $E$ is \textbf{weak}*-\textbf{Borel}-\textbf{measurable} if it is Borel measurable when $E$ is equipped with the Borel $\sigma$-algebra of its weak*-topology. We say that $f$ is \textbf{weak}*-\textbf{measurable} if it is a measurable map from the completion $(X,\overline{\cB_X},\bar{\mu})$ of $(X,\cB_X,\mu)$ to the Borel $\sigma$-algebra of the weak*-topology of $E$. 

\begin{lem}\label{lem: a.e.}
Let $G$ be a locally compact second countable group and let $(X,\mu)$ be a standard Borel $G$-space with quasi-invariant measure. Let $E$ be a coefficient $G$-module. Then for any class $\alpha\in L_{\mathrm{alt,w*}}^\infty(X^{n}, E)^G$, there exists a bounded weak*-Borel-measurable representative $f_\alpha: X^n\to E$ and a conull subsets $X_0\subset X^n$ such that 
\begin{enumerate}[nosep]
    \item $X_0$ is invariant under both the action of $G$ and the action of $\Sym(n)$ by permuting the $n$-coordinates;
    \item $f_\alpha|_{X_0}$ is alternating and $G$-equivariant.
\end{enumerate}
\begin{proof}
The symmetric group $\Sym(n)$ acts on $X^n$ by permuting coordinates, and it has an action on $E$ by multiplying the signature character $\sgn:\Sym(n)\to \{\pm 1\}$. A map $f: X^n\to E$ is alternating if and only if it is $\Sym(n)$-equivariant. 

Let $\alpha\in L_{\mathrm{alt,w*}}^\infty(X^{n}, E)^G$ and let $f: X^n\to E$ be a bounded representative of $\alpha$. Then the image of $f$ is contained in the closed ball $B_\alpha$ of radius $\norm{\alpha}_\infty$ in $E$. Since $E$ is a coefficient $G$-module, the ball $B_\alpha$ is a standard Borel $G$-space under the weak*-topology. Thus, the weak*-measurable map $f$ is almost everywhere equal to a weak*-Borel-measurable map $X^n\to B_\alpha$ (see, for example, Proposition 2.2.3 in \cite{Coh}, along with the fact that any two uncountable standard Borel spaces are Borel isomorphic (Theorem 3.3.13 in \cite{Sri91})), still denoted by $f$.

Since the action of the symmetric group $\Sym(n)$ on $X^n$ and $E$ commutes with the corresponding $G$ actions, we may view $X^n$ and $E$ as $(\Sym(n)\times G)$-spaces. For any $(\sigma, g)\in \Sym(n)\times G$, we have
\begin{enumerate}[noitemsep]
    \item for every $\sigma\in \Sym(n)$, there exists a conull subset $X_\sigma\subset X^n$ such that $f(\sigma x) = \sgn(\sigma)\cdot f(x)$ for all $x\in X_\sigma$, and
    \item for every $g\in G$, there exists a conull subset $X_g\subset X^n$ such that $f(g\cdot x)=g\cdot f(x)$ for all $x\in X_g$.
\end{enumerate}
Thus, for any $x$ in the conull subset $X_\sigma\cap \sigma^{-1}X_{g}$, one has
$$f(g\cdot\sigma x)=g\cdot f(\sigma x) = \sgn(\sigma)\cdot g\cdot f(x).$$
Thus, Lemma \ref{lem: a.e.zimmer} applies and we get a ($\Sym(n)\times G$)-invariant conull subset $X_0\subset X^n$ and a ($\Sym(n)\times G$)-equivariant weak*-Borel-measurable map $\wt{f}:X_0\to B_\alpha\subset E$ that is equal to $f$ almost everywhere on $X_0$. Finally, we may extend $\wt{f}$ to a weak*-Borel-measurable map $f_\alpha: X^n\to B_\alpha\subset E$ (see, for example, \cite{Sho83}), and the proof is complete.
\end{proof}
\end{lem}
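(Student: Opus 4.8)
The plan is to reduce the statement to a direct application of Zimmer's almost-everywhere-equivariance lemma (Lemma \ref{lem: a.e.zimmer}) applied to the semidirect product group $\Sym(n)\times G$ acting on $X^n$. First I would unwind the meaning of ``alternating'': since $\Sym(n)$ acts on $X^n$ by permuting coordinates and on $E$ through the sign character, a map $f\colon X^n\to E$ is alternating precisely when it is $\Sym(n)$-equivariant. Because these two actions commute with the respective $G$-actions (the $G$-action does not touch coordinates, the $\Sym(n)$-action does not touch the $G$-module structure), both $X^n$ and $E$ become $(\Sym(n)\times G)$-spaces, and an alternating $G$-equivariant map is the same thing as a $(\Sym(n)\times G)$-equivariant map.

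Next I would fix a bounded representative $f$ of the class $\alpha$. Its essential image lies in the norm-closed ball $B_\alpha$ of radius $\norm{\alpha}_\infty$, which, being a weak*-closed bounded subset of a coefficient module (hence the dual of a separable space), is a standard Borel $G$-space in the weak*-topology; so I may assume after an a.e.\ modification (via the standard fact that a weak*-measurable map into a standard Borel space agrees a.e.\ with a weak*-Borel map, using that all uncountable standard Borel spaces are isomorphic) that $f$ is genuinely weak*-Borel-measurable with values in $B_\alpha$. Then the hypotheses of Lemma \ref{lem: a.e.zimmer} must be checked: for each $\sigma\in\Sym(n)$ the identity $f(\sigma x)=\sgn(\sigma)f(x)$ holds off a null set $X_\sigma$ (because $\alpha$ is alternating as a class), and for each $g\in G$ the identity $f(gx)=g f(x)$ holds off a null set $X_g$ (because $\alpha$ is $G$-invariant as a class). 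Combining these on the conull set $X_\sigma\cap\sigma^{-1}X_g$ gives, for each $(\sigma,g)\in\Sym(n)\times G$, that $f((\sigma,g)\cdot x)=(\sigma,g)\cdot f(x)$ for a.e.\ $x$, which is exactly the a.e.-equivariance hypothesis for the $(\Sym(n)\times G)$-action. Here one uses that $X^n$ carries a quasi-invariant measure for this larger group and is still a standard Borel space.

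Applying Lemma \ref{lem: a.e.zimmer} then produces a $(\Sym(n)\times G)$-invariant conull subset $X_0\subset X^n$ together with a genuinely $(\Sym(n)\times G)$-equivariant weak*-Borel map $\wt f\colon X_0\to B_\alpha$ agreeing with $f$ a.e.\ on $X_0$; invariance of $X_0$ under $\Sym(n)\times G$ is precisely invariance under both the coordinate-permutation action and the $G$-action, giving (1), and equivariance of $\wt f$ unpacks to being alternating and $G$-equivariant, giving (2). The final cosmetic step is to extend $\wt f$ from $X_0$ to all of $X^n$ as a bounded weak*-Borel map into $B_\alpha$ (any Borel extension from a conull Borel subset works, e.g.\ setting it to $0$ off $X_0$), yielding the desired representative $f_\alpha$ with essential image still in $B_\alpha$, hence bounded by $\norm{\alpha}_\infty$.

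The only genuinely delicate point — and the one I would be most careful about — is the measurable-structure bookkeeping: verifying that $B_\alpha$ with its weak*-topology really is a standard Borel space (this is where separability of the predual, i.e.\ the hypothesis that $E$ is a coefficient module, is essential), that the a.e.\ modification landing $f$ in $B_\alpha$ is legitimate, and that $X^n$ remains a standard Borel space with quasi-invariant measure under the enlarged group so that Zimmer's lemma genuinely applies. None of this is deep, but it is exactly the kind of step where a hidden non-separability or non-standardness would break the argument, so it deserves the explicit citations to \cite{Coh}, \cite{Sri91}, and \cite{Sho83} rather than being waved through.
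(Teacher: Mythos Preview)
Your proposal is correct and follows essentially the same route as the paper's own proof: reinterpret ``alternating and $G$-equivariant'' as $(\Sym(n)\times G)$-equivariance, replace the representative by a weak*-Borel map into the ball $B_\alpha$, verify the almost-everywhere equivariance hypothesis of Lemma~\ref{lem: a.e.zimmer} for the product group, and then extend the resulting map off $X_0$. The only cosmetic slip is calling $\Sym(n)\times G$ a ``semidirect product''; since the two actions commute it is a direct product, which is in fact what you use.
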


\begin{rmk}\label{rmk: a.e. orbits1}
\emph{In the above Lemma \ref{lem: a.e.}, if $X_1\subset X$ is a $G$-orbit of positive measure, then for any two pairs $(f_\alpha,X_0)$ and $(f_\alpha',X_0')$ that satisfies the two conditions in Lemma \ref{lem: a.e.}, one has $X_1\subset X_0\cap X_0'$ by the $G$-invariance of $X_0$ and $X_0'$.
Moreover, we have $f_\alpha|_{X_1}=f_\alpha'|_{X_1}$. Indeed, since the two maps $f_\alpha$ and $f_\alpha'$ agree almost everywhere and $X_1$ is of positive measure, there exists some $x\in X_1$ such that $f_\alpha(x)=f_\alpha'(x)$. It follows that $f_\alpha(gx)=f_\alpha'(gx)$ for all $g\in G$ since $f_\alpha'|_{X_0'}$ are $G$-equivariant.}
\end{rmk}

\begin{rmk}\label{rmk: a.e. orbits}
\emph{If $X = (\bd T)^n$ and $G=\aut(T_q)$, then any $G$-orbit in the conull subset $D_n(\bd T)\subset X$ contains a subset of positive measure. Indeed, if $\gamma=(\gamma_0,...,\gamma_{n-1})\in D_n(\bd T)$ and $m\in T$ is some vertex, take $N_\gamma:= \max_{i\ne j} (\gamma_i, \gamma_j)_m$. Then the $G$-orbit of $\gamma$ contains the neighborhood
$$
U:=\{\gamma'=(\gamma'_0,...,\gamma_{n-1}')\in D_n(\bd T): (\gamma_j',\gamma_j)_m > N_\gamma\},
$$
which has positive measure. Thus, we may always take $X_0$ to be the conull subset $D_n(\bd T)$ by Remark \ref{rmk: a.e. orbits1}.} 
\end{rmk}

Finally, we record the following two facts for later use:

\begin{cor}\label{cor: a.e.}
Let $E$ be a coefficient $G$-module. If any bounded weak*-Borel-measurable map $f: (\bd T)^{n+1}\to E$ that is $G$-equivariant and alternating on $D_{n+1}(\bd T)$ vanishes almost everywhere, then $\Hcb^n(\aut(T),E)=0$.
\begin{proof}
Let $\alpha\in L_{\alt,\mathrm{w}^*}^\infty((\bd T)^{n+1}, E)$ be a cochain, then by Lemma \ref{lem: a.e.} and Remark \ref{rmk: a.e. orbits}, there is a bounded weak*-Borel measurable representative $f_\alpha: (\bd T)^{n+1}\to E$ such that $f_\alpha$ is alternating and $G$-equivariant on $D_{n+1}(\bd T)$. Since $f_\alpha$ vanishes almost everywhere, we have $\alpha = 0$ and the conclusion follows from Corollary \ref{cor: reso_bd}.
\end{proof}
\end{cor}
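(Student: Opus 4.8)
The plan is to deduce this from Corollary~\ref{cor: reso_bd} together with the almost-everywhere machinery of Section~\ref{sec: ae}. Recall that by Corollary~\ref{cor: reso_bd}, $\Hcb^n(\aut(T),E)$ is the cohomology, at the term $L^\infty_{\mathrm{w*},\alt}((\bd T)^{n+1},E)^{\aut(T)}$, of the complex of $\aut(T)$-equivariant alternating $L^\infty_{\mathrm{w*}}$-cochains on the powers of $\bd T$. Hence it suffices to show that this single cochain space is already trivial: if every degree-$n$ cochain in this model vanishes, then in particular every cocycle vanishes, and the cohomology at that spot is $0$.

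So first I would fix an arbitrary class $\alpha \in L^\infty_{\mathrm{w*},\alt}((\bd T)^{n+1},E)^{\aut(T)}$ and apply Lemma~\ref{lem: a.e.} with $X=\bd T$. This produces a bounded weak*-Borel-measurable representative $f_\alpha\colon (\bd T)^{n+1}\to E$ of $\alpha$ together with a conull subset $X_0\subseteq (\bd T)^{n+1}$, invariant under both the $\aut(T)$-action and the coordinate-permutation action of $\Sym(n+1)$, on which $f_\alpha$ is simultaneously $\aut(T)$-equivariant and alternating. Next I would invoke Remark~\ref{rmk: a.e. orbits}: every $\aut(T)$-orbit contained in the conull set $D_{n+1}(\bd T)$ meets an open set of positive measure, so by the $G$-invariance of $X_0$ together with Remark~\ref{rmk: a.e. orbits1} the representative $f_\alpha$ is in fact $\aut(T)$-equivariant and alternating on all of $D_{n+1}(\bd T)$.

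At this point $f_\alpha$ meets exactly the hypothesis of the corollary, so by assumption $f_\alpha=0$ almost everywhere; hence $\alpha=0$ as an element of $L^\infty_{\mathrm{w*},\alt}((\bd T)^{n+1},E)^{\aut(T)}$. Since $\alpha$ was arbitrary, that cochain space is $\{0\}$, and reading off the cohomology of the complex in Corollary~\ref{cor: reso_bd} at that term gives $\Hcb^n(\aut(T),E)=0$.

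I do not expect a genuine obstacle here: all the substantive work is already packaged in Lemma~\ref{lem: a.e.} (upgrading an a.e.-equivariant, a.e.-alternating measurable class to an honest equivariant alternating Borel map) and in Remarks~\ref{rmk: a.e. orbits1}–\ref{rmk: a.e. orbits} (pinning the ``good'' domain down to $D_{n+1}(\bd T)$). The only things needing care are bookkeeping: matching the cohomological degree $n$ with the $(n+1)$-fold boundary power appearing in the resolution, and noting that one only needs vanishing of the cochain space at one spot rather than an actual cocycle-modulo-coboundary computation.
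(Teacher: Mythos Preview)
Your proposal is correct and follows essentially the same route as the paper's proof: pick an arbitrary cochain, use Lemma~\ref{lem: a.e.} together with Remarks~\ref{rmk: a.e. orbits1}--\ref{rmk: a.e. orbits} to obtain a representative that is genuinely $G$-equivariant and alternating on $D_{n+1}(\bd T)$, apply the hypothesis to conclude the cochain vanishes, and then invoke Corollary~\ref{cor: reso_bd}. Your write-up is somewhat more explicit about why vanishing of the entire cochain space forces vanishing of the cohomology, but the argument is the same.
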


\begin{cor}\label{cor: a.e. 2}
Let $n\geq 1$ and let $E$ be a coefficient $G$-module. Denote by $\cF_\alt((\bd T)^{n+1},E)^G$ the vector space of all $G$-equivariant alternating maps from $(\bd T)^{n+1}\to E$. Then any map $f\in \cF_\alt((\bd T)^{n+1},E)^G$ is weak*-continuous on $D_{n+1}(\bd T)$. Moreover, there is a vector space isomorphism
$$\cF_\alt^\infty((\bd T)^{n+1},E)^G\overset{\simeq}\longrightarrow L_{\alt,\mathrm{w}^*}^\infty((\bd T)^{n+1},E)^G,$$
where $\cF_\alt^\infty((\bd T)^{n+1},E)^G\subset \cF_\alt((\bd T)^{n+1},E)^G$ is the subspace of bounded maps.
\begin{proof}
We first show that every map $f\in \cF_\alt((\bd T)^{n+1},E)^G$ is weak*-continuous on $D_{n+1}(\bd T)$: Let $\gamma\in D_{n+1}(\bd T)$ and set $v:=f(\gamma)\in E$. Let $O_v$ be a neighborhood of $v$. Since $E$ is a coefficient $G$-module, the map $\varphi_v: g\mapsto gv$ is weak*-continuous. Choose a neighborhood $U_e\subset G$ of identity such that $\varphi_v(U_e)\subset O_v$, and consider $U_e\cdot\gamma:=\{g\gamma: g\in U_e\}$ which contains a neighborhood of $\gamma$. Then $f(U_e\cdot\gamma)\subset O_v$ by $G$-equivariance. This shows that $f|_{D_{n+1}(\bd T)}$ is weak*-continuous. In particular, the restriction $f|_{D_{n+1}(\bd T)}$ defines a class $c_f\in L_{\alt,\mathrm{w}^*}^\infty((\bd T)^{n+1},E)^G$ for any $f\in \cF_\alt^\infty((\bd T)^{n+1},E)^G$

We now show that the linear map $f\mapsto c_f$ is an isomorphism. To see that it is injective, suppose that $c_f=0$. Then $f=0$ almost everywhere on $D_{n+1}(\bd T)$. Since any $G$-orbit on $D_{n+1}(\bd T)$ is of positive measure (Remark \ref{rmk: a.e. orbits}), we have that $f=0$ on $D_{n+1}(\bd T)$, and hence on the whole $(\bd T)^{n+1}$ since $f$ is alternating. For surjectivity, let $c\in L_{\alt,\mathrm{w}^*}^\infty((\bd T)^{n+1},E)^G$ and let $f_c$ be its unique representative that is alternating and $G$-equivariant on $D_{n+1}(\bd T)$ (Remark \ref{rmk: a.e. orbits} and Lemma \ref{lem: a.e.}). Then the map $f\in \cF_\alt^\infty((\bd T)^{n+1},E)^G$ defined by $f=f_c$ on $D_{n+1}(\bd T)$ and zero elsewhere satisfies $c_f=c$. 
\end{proof}
\end{cor}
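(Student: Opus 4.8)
The plan is to prove the two assertions in turn, the second resting on the first.

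\emph{Weak*-continuity.} Fix $f\in\cF_\alt((\bd T)^{n+1},E)^G$, a point $\gamma=(\gamma_0,\dots,\gamma_n)\in D_{n+1}(\bd T)$, and set $v:=f(\gamma)$. Since $E$ is a coefficient $G$-module, the orbit map $\varphi_v\colon G\to E$, $g\mapsto gv$, is weak*-continuous. The geometric input I would use is that the $\aut(T)$-action on $D_{n+1}(\bd T)$ is microtransitive at $\gamma$: fixing a vertex $o$ and putting $N_\gamma:=\max_{i\ne j}(\gamma_i,\gamma_j)_o$, for all sufficiently large $r$ the compact open subgroup $G(B_r(o))$ (pointwise stabiliser of the radius-$r$ ball about $o$) already satisfies $G(B_r(o))\cdot\gamma\supseteq\prod_i U(o,\gamma_i(o,r))$, which is a neighbourhood of $\gamma$ in $(\bd T)^{n+1}$. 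This is precisely the mechanism in the proof of Remark~\ref{rmk: a.e. orbits}: a tuple of ends close to $\gamma$ is realised by an automorphism fixing a large ball about $o$ and moving the pairwise-disjoint tails $[\gamma_i(o,r),\gamma_i)$ of the rays independently, using the abundance of automorphisms of the full tree. Granting it, for any weak*-neighbourhood $O$ of $v$ choose a neighbourhood $U_e$ of the identity in $G$ with $\varphi_v(U_e)\subseteq O$; then $U_e\cdot\gamma$ contains a neighbourhood of $\gamma$, and $G$-equivariance of $f$ gives $f(U_e\cdot\gamma)=U_e\cdot v\subseteq O$. Thus $f$ is weak*-continuous at $\gamma$, and $\gamma$ was arbitrary in $D_{n+1}(\bd T)$. (Equivalently one can say that $D_{n+1}(\bd T)$ is a disjoint union of open $G$-orbits, each $G$-homeomorphic to $G/\stab(\gamma)$, and that $f$ on such an orbit is the weak*-continuous map descending from $\varphi_v$, noting $v\in E^{\stab(\gamma)}$ by equivariance.)

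\emph{The isomorphism.} By the first part, every $f\in\cF_\alt^\infty((\bd T)^{n+1},E)^G$ is bounded and weak*-continuous on the conull set $D_{n+1}(\bd T)$; extending $f|_{D_{n+1}(\bd T)}$ by $0$ on the null complement gives a bounded weak*-Borel-measurable, $G$-equivariant, alternating representative, so $f$ determines a class $c_f\in L_{\alt,\mathrm{w}^*}^\infty((\bd T)^{n+1},E)^G$, and $f\mapsto c_f$ is visibly linear. For injectivity, suppose $c_f=0$, i.e. $f=0$ almost everywhere on $D_{n+1}(\bd T)$; since each $G$-orbit there has positive measure (Remark~\ref{rmk: a.e. orbits}), $f$ vanishes at some point of each orbit, hence on each orbit by equivariance, hence on all of $D_{n+1}(\bd T)$, and then on the remaining diagonal since transposing two equal coordinates forces $f=-f$ there --- so $f\equiv0$. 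For surjectivity, given $c\in L_{\alt,\mathrm{w}^*}^\infty((\bd T)^{n+1},E)^G$, Lemma~\ref{lem: a.e.} together with Remark~\ref{rmk: a.e. orbits} (taking $X_0=D_{n+1}(\bd T)$) produces a bounded weak*-Borel-measurable representative $f_c$ that is genuinely alternating and $G$-equivariant on $D_{n+1}(\bd T)$; set $f:=f_c$ on $D_{n+1}(\bd T)$ and $f:=0$ on the complement. Both pieces are invariant under $G$ and under coordinate permutations, $f_c$ is $G$-equivariant and alternating on the first, and the zero map is trivially so on the second, whence $f\in\cF_\alt^\infty((\bd T)^{n+1},E)^G$ and $c_f=c$.

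I expect the one genuine obstacle to be the microtransitivity fact used for weak*-continuity --- that tuples of distinct ends near $\gamma$ are realised by automorphisms arbitrarily close to the identity. But this is exactly the construction already carried out in the proof of Remark~\ref{rmk: a.e. orbits} (fix a large ball, permute the disjoint ray-tails independently), so it brings in no new idea; everything else is routine bookkeeping with the almost-everywhere machinery of Section~\ref{sec: ae}.
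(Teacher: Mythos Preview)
Your proof is correct and follows essentially the same approach as the paper's: both use weak*-continuity of the orbit map together with the fact that $U_e\cdot\gamma$ contains a neighbourhood of $\gamma$ for the first claim, and both handle the isomorphism via $f\mapsto c_f$ with injectivity from Remark~\ref{rmk: a.e. orbits} and surjectivity from Lemma~\ref{lem: a.e.}. You are slightly more explicit about the microtransitivity step, which the paper simply asserts, but the arguments are otherwise the same.
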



\section{Proof of Theorem \ref{thm: A} and \ref{thm: B}}\label{sec: pf}

\subsection{The Flipping Lemma}

Let $T=T_q$ with $q\geq 2$ and let $(\pi, \cH_\pi)$ be an irreducible representation of $G:=\aut(T)$. As seen in Section \ref{sec: uni_rep}, the Hilbert space $\cH_\pi$ is often, roughly speaking,  an $L^2$-space on a collection $\mathcal{S}_\pi$ of subtrees. One of the key observations in our proof of the vanishing of $\Hcb^*(G, \cH_\pi)$ is that the space $L_\alt^\infty((\bd T)^{n+1}, \cH_\pi)^G$ is usually small and, in many cases, trivial: For a map $\alpha: (\bd T)^{n+1}\to \cH_\pi$, a tuple $(\gamma_0,...,\gamma_n)\in (\bd T)^{n+1}$, and a subtree $S\in \mathcal{S}_\pi$, we may often choose a suitable elliptic element $h\in \aut(T)$ that permutes $(\gamma_0,...,\gamma_n)$ with a negative sign and fixes $S$ pointwise. Then $\alpha(\gamma_0,...,\gamma_n)(S)$ is forced to be 0 by the $G$-equivariance and the alternating conditions of $\alpha$. In this section, we establish a condition for the existence of such $h$'s, which immediately leads to the vanishing of $\Hcb^*(G, \cH_\pi)$ when $\pi$ is special. 

\begin{defn}
\emph{Let $\gamma = (\gamma_0,\gamma_1,\gamma_2)\in D_3(\bd T)$ be a triple of distinct elements with median $m_\gamma$, and let $S$ be a subtree of $T$. We say that $S$ \textbf{misses} $\gamma$ if there are at least two distinct indices $i,j\in \{0,1,2\}$ such that }
$$(\gamma_i, x)_{m_\gamma} = (\gamma_j,x)_{m_\gamma}=0$$
\emph{for any vertex $x\in S$. Otherwise, we say that $S$ \textbf{hits} $\gamma$.}
\end{defn}

Observe that if $S$ is a finite complete subtree, then it hits $\gamma\in D_3(\bd T)$ if and only if the median of $\gamma$ is a vertex of degree $q+1$ in $S$. In other words, a finite complete subtree hits a triple if and only if it "passes through" its median. In particular, if $(\pi, \cH_\pi)$ is an irreducible unitary representation of $\aut(T)$, then there are at most finitely many elements in $\cM_\pi$ that hit $\gamma$. 

\begin{lem}[The Flipping Lemma]\label{lem: flip}
Let $n\geq 2$ and let $\gamma= (\gamma_0,...,\gamma_n)\in D_{n+1}(\bd T)$ be a tuple of distinct elements in $\bd T$. Suppose that $S$ is a (possibly empty) subtree in $T$ that misses the triple $(\gamma_0,\gamma_1,\gamma_2)$, then there are two distinct indices $i,j\in \{0,...,n\}$ and an element 
$$h_{ij}\in G(S)\cap \bigcap_{k\ne i,j} \stab_G(\gamma_k)$$
such that $h_{ij}\cdot\gamma_i=\gamma_j$ and $h_{ij}\cdot \gamma_j = \gamma_i$.
\begin{proof}
Let $m_\gamma$ be the median of $\gamma_0, \gamma_1$ and $\gamma_2$. Since $S$ misses $(\gamma_0,\gamma_1,\gamma_2)$, we may assume without loss of generality that $(\gamma_1, x)_{m_\gamma} = (\gamma_2,x)_{m_\gamma}=0$ for all $x\in S$. Consider the subset
$$J_0 :=\{j: (\gamma_j,\gamma_1)_{m_\gamma}>0\}\cup\{j:(\gamma_j,\gamma_2)_{m_\gamma}>0\}\subset\{1,...,{n}\}.
$$
Note that $\{1,2\}\subset J_0$ and $(\gamma_l,\gamma_0)_{m_\gamma}=0$ for all $l\in J_0$. Choose two indices $i,j\in J_0$ such that 
$$(\gamma_i, \gamma_j)_{m_\gamma} = \max_{k,l\in J_0,\, k\ne l}\,(\gamma_k,\gamma_l)_{m_\gamma}.$$
and denote by $m_{\gamma}'$ the median of $m_\gamma, \gamma_i$ and $\gamma_j$. Then by our choice of $i$ and $j$, we have that 
$(\gamma_k,\gamma_i)_{m_\gamma'}=(\gamma_k,\gamma_j)_{m_\gamma'}=0$
for all $k\in\{0,...,n\}\setminus\{i,j\}$, and
$(x,\gamma_i)_{m_\gamma'}=(x,\gamma_j)_{m_\gamma'}=0$
for all $x\in S$. The existence of the desired $h_{ij}$ is now evident (see Figure \ref{fig: flip_cusp} for an illustration). 
\end{proof}
\end{lem}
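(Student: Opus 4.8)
The plan is to argue entirely with medians and Gromov products in $T$, using that $\aut(T)$ can realize any local isometry that fixes a chosen vertex and permutes the branches emanating from it. Let $m_\gamma$ be the median of $\gamma_0,\gamma_1,\gamma_2$. Since $S$ misses this triple, after relabelling I may assume $(\gamma_1,x)_{m_\gamma}=(\gamma_2,x)_{m_\gamma}=0$ for every vertex $x\in S$; geometrically, the rays $[m_\gamma,\gamma_1)$ and $[m_\gamma,\gamma_2)$ leave $S$ immediately at $m_\gamma$.

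First I would isolate the obstruction to simply swapping $\gamma_1$ and $\gamma_2$: some other endpoint $\gamma_k$ may share a positive-length initial segment with $[m_\gamma,\gamma_1)$ or $[m_\gamma,\gamma_2)$, so an automorphism flipping the $\gamma_1$- and $\gamma_2$-branches at $m_\gamma$ would move $\gamma_k$ as well. To handle this, set $J_0:=\{j:(\gamma_j,\gamma_1)_{m_\gamma}>0\}\cup\{j:(\gamma_j,\gamma_2)_{m_\gamma}>0\}$, which contains $1$ and $2$, and choose $i,j\in J_0$ maximizing $(\gamma_i,\gamma_j)_{m_\gamma}$. Two facts should then fall out. (a) For every $k\in J_0$ one has $(\gamma_k,x)_{m_\gamma}=0$ for all $x\in S$, because $\gamma_k$ agrees with $\gamma_1$ or $\gamma_2$ along a positive-length ray from $m_\gamma$ and both of those rays leave $S$; in particular this holds for $\gamma_i$ and $\gamma_j$. (b) For $k\notin J_0$, by definition $(\gamma_k,\gamma_1)_{m_\gamma}=(\gamma_k,\gamma_2)_{m_\gamma}=0$, and moreover $(\gamma_k,\gamma_l)_{m_\gamma}=0$ for every $l\in J_0$ (else $k\in J_0$).

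Next let $m_\gamma'$ be the median of $m_\gamma,\gamma_i,\gamma_j$; it lies on $[m_\gamma,\gamma_i)$ and on $[m_\gamma,\gamma_j)$, so $m_\gamma$ sits in the branch of $m_\gamma'$ that is disjoint from both the $\gamma_i$- and the $\gamma_j$-branches. I would then verify that at $m_\gamma'$ the rays toward $\gamma_i$ and toward $\gamma_j$ occupy two distinct branches while everything else avoids both: $(\gamma_k,\gamma_i)_{m_\gamma'}=(\gamma_k,\gamma_j)_{m_\gamma'}=0$ for all $k\ne i,j$ (maximality of $(\gamma_i,\gamma_j)_{m_\gamma}$ disposes of $k\in J_0$, and (b) together with $m_\gamma'$ lying on the $m_\gamma$-side disposes of $k\notin J_0$), and $(x,\gamma_i)_{m_\gamma'}=(x,\gamma_j)_{m_\gamma'}=0$ for all $x\in S$ (from (a), since $S$ lies in the $m_\gamma$-branch of $m_\gamma'$). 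The degenerate subcase in which the maximum defining $i,j$ equals $0$ — which forces $J_0=\{1,2\}$ and $m_\gamma'=m_\gamma$ — is covered by exactly the same bookkeeping.

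Finally, with $m_\gamma'$ a vertex of $T$ and the $\gamma_i$- and $\gamma_j$-branches identified as isomorphic hanging subtrees, I would construct $h_{ij}\in\aut(T)$ that interchanges these two branches by a chosen isometry and acts as the identity on the rest of $T$. By construction $h_{ij}$ fixes $m_\gamma'$ and every branch at $m_\gamma'$ other than the two swapped ones; hence it fixes $S$ pointwise and fixes each $\gamma_k$ with $k\ne i,j$, while $h_{ij}\gamma_i=\gamma_j$ and $h_{ij}\gamma_j=\gamma_i$, so $h_{ij}\in G(S)\cap\bigcap_{k\ne i,j}\stab_G(\gamma_k)$ as required. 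I expect the only real work to be the branch-counting in the two middle paragraphs — engineering $J_0$ and the maximal pair so that a single swap at $m_\gamma'$ is simultaneously compatible with $S$ and with all remaining endpoints; the existence of $h_{ij}$ itself is just homogeneity of the regular tree.
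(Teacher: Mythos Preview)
Your proposal is correct and follows essentially the same route as the paper's proof: the same set $J_0$, the same choice of a maximal pair $(i,j)\in J_0$, the same auxiliary median $m_\gamma'$, and the same branch-swap at $m_\gamma'$. You supply a bit more detail than the paper (the explicit construction of $h_{ij}$ as a two-branch swap and the degenerate case $m_\gamma'=m_\gamma$), but the argument is identical in substance.
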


\begin{figure}[!htb]
\centering
\setlength{\unitlength}{0.5cm}

\begin{picture}(10,10)
\put(5,4){{\color{black}\circle*{0.2}}\hbox{\kern4pt\texttt{$m_\gamma$}}}
\put(5,4){\vector(0,1){5.5}}  
\put(5,9.5){\makebox(0,0)[l]{\ $\gamma_0$}}

\put(5,6){\vector(1,0){4}}
\put(9,6){\makebox(0,0)[l]{\ $\gamma_4$}}

\put(5,4){\vector(-1,-0.8){4.5}}
\put(0,0){\makebox(0,0)[l]{\ $\gamma_1$}}
{
\put(3,2.4){\line(1,-0.8){0.99}}
\put(4,1.6){\color{blue}\vector(1,-0.8){1.5}}
\put(5.4,0){\color{blue}\makebox(0,0)[l]{\ $\gamma_3$}}
}

\put(4,1.6){{\color{blue}\circle*{0.2}}\hbox{\kern4pt\texttt{$m_\gamma'$}}}
{
\put(4,1.6){\color{blue}\vector(-1,-0.8){1.5}}
\put(2.5,0){\color{blue}\makebox(0,0)[r]{\ $\gamma_5$}}
}
\put(3.0,0.5){\color{blue}\makebox(0,0)[l]{\ $\xleftrightarrow[]{\;h_{35}\;}$}}
\put(5,4){\vector(1,-0.8){4.5}}
\put(9.5,0){\makebox(0,0)[l]{\ $\gamma_2$}}
\linethickness{0.7mm}
\put(5,7){\color{red}\line(0,1){0.5}}
\put(5,7){\color{red}\line(-1,0){0.5}}
\put(5,7){\color{red}\line(0,-1){0.5}}
\put(3.5,7){\color{red} \makebox(0,0)[l]{\ $S$}}
\end{picture}
\caption{\label{fig: flip_cusp} The element $h_{35}\in G(S)$ flips $\gamma_3$ and $\gamma_5$ and stabilizes all other $\gamma_k$'s.}
\end{figure}
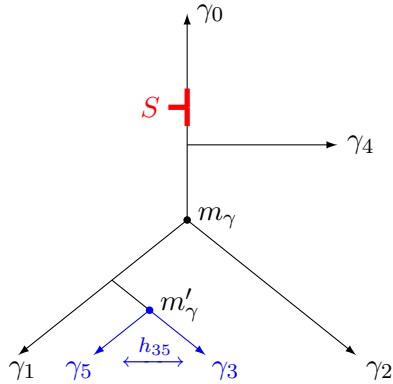

As a toy example, we establish the vanishing of $\Hcb^*(G, \cH_\pi)$ when $\pi$ is special.

\begin{thm}\label{thm: special}
Fix $n\geq 2$. Let $\alpha: (\bd T)^{n+1}\to \ell^2(\wt{\mathcal{E}_T}, 
 \C)$ be any map that is alternating and $G$-equivariant on $D_{n+1}(\bd T)$. Then $\alpha = 0$ on $D_{n+1}(\bd T)$. In particular, one has $\Hcb^n(G, \cH_\pi)=0$ for all $n\geq 1$ for any unitary special representations $(\pi, \cH_\pi)$.
 \begin{proof}
Let $\gamma =(\gamma_0,...,\gamma_n)\in D_{n+1}(\bd T)$ and let $e = e_{xy}\in \wt{\mathcal{E}_T}$ be an oriented edge. Then $e$ cannot hit the triple $(\gamma_0,\gamma_1,\gamma_2)$. By Lemma \ref{lem: flip}, there are two indices $i,j\in \{0,...,n\}$ and an element $h_{ij}\in \stab_G(e)\cap\bigcap_{k\ne i,j} \stab_G(\gamma_k)$ such that $h_{ij}(\gamma_i)=\gamma_j$ and $h_{ij}(\gamma_j)=\gamma_i$. Since $\alpha$ is $G$-equivariant and alternating on $D_{n+1}(\bd T)$, one has
$$\alpha(\gamma)(e) = \alpha(h_{ij}\gamma)(h_{ij}e)=-\alpha(\gamma)(e).$$
It follows that $\alpha(\gamma)=0$. Since $\cH_\pi$ is a submodule in $\ell^2(\wt{\cE_T}, \C)$ when $\pi$ is special, the vanishing of $\Hcb^n(G,\cH_\pi)$ for all $n\geq 1$ follows from Corollary \ref{cor: reso_bd}, along with \ref{prop: degree_1}.
 \end{proof}
\end{thm}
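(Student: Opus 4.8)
The plan is to derive both assertions from the Flipping Lemma (Lemma \ref{lem: flip}). The crucial preliminary observation is that an oriented edge can never hit a triple: viewing the underlying edge $\{x,y\}$ of $e=e_{xy}$ as the two-vertex complete subtree $S$, neither endpoint has degree $q+1$ inside $S$, so by the characterization of hitting recorded just after the definition of missing, $S$ misses every triple $(\gamma_0,\gamma_1,\gamma_2)\in D_3(\bd T)$. Hence, for every tuple $\gamma\in D_{n+1}(\bd T)$ and every oriented edge $e$, the hypotheses of Lemma \ref{lem: flip} are met with this $S$, and that is the only geometric input the argument needs.

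First I would fix an arbitrary $\gamma=(\gamma_0,\dots,\gamma_n)\in D_{n+1}(\bd T)$ and an arbitrary oriented edge $e=e_{xy}$. Applying Lemma \ref{lem: flip} to $S=\{x,y\}$ produces indices $i\neq j$ and an element $h_{ij}\in G(S)\cap\bigcap_{k\ne i,j}\stab_G(\gamma_k)$ that swaps $\gamma_i$ and $\gamma_j$. Since $h_{ij}$ fixes both endpoints of $e$ it fixes the oriented edge, so $h_{ij}e=e$; and $h_{ij}$ acts on the tuple $\gamma$ exactly as the transposition $(i\,j)$. The $G$-equivariance and the alternating property of $\alpha$ on $D_{n+1}(\bd T)$ then give
$$\alpha(\gamma)(e)=\alpha(h_{ij}\gamma)(h_{ij}e)=-\alpha(\gamma)(e),$$
whence $\alpha(\gamma)(e)=0$. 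As $e$ is arbitrary, $\alpha(\gamma)=0$ in $\ell^2(\wt{\mathcal{E}_T},\C)$, and as $\gamma$ is arbitrary, $\alpha\equiv 0$ on $D_{n+1}(\bd T)$; this settles the first assertion.

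For the cohomological consequence I would invoke the measurable machinery of Section \ref{sec: ae}. A unitary special representation realizes $\cH_\pi=\cH_\lambda^\pm$ as a closed $G$-submodule of $\ell^2(\wt{\mathcal{E}_T},\C)$, so any bounded weak*-Borel-measurable, $G$-equivariant, alternating map $(\bd T)^{n+1}\to\cH_\pi$ is in particular such a map into $\ell^2(\wt{\mathcal{E}_T},\C)$; by the first assertion it vanishes on the conull set $D_{n+1}(\bd T)$, hence almost everywhere. For $n\geq 2$, Corollary \ref{cor: a.e.} then yields $\Hcb^n(G,\cH_\pi)=0$. Degree $n=1$ falls outside the range of the Flipping Lemma, but $\cH_\pi$ is a Hilbert space, hence reflexive, so $\Hcb^1(G,\cH_\pi)=0$ by Proposition \ref{prop: degree_1}, completing the claim for all $n\geq 1$.

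I do not anticipate a genuine obstacle, since all of the geometry is already packaged inside Lemma \ref{lem: flip}. The only points demanding care are bookkeeping ones: confirming that the action-direction conventions make the displayed two-step identity hold (equivariance transports the flip from the tuple to the edge-argument, where $h_{ij}e=e$ annihilates it, while the minus sign is supplied by the alternating property), and checking that the purely pointwise first assertion feeds correctly into Corollary \ref{cor: a.e.}, which is legitimate precisely because $D_{n+1}(\bd T)$ is conull so that vanishing there is vanishing almost everywhere.
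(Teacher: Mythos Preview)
Your proposal is correct and follows essentially the same route as the paper: both apply the Flipping Lemma to the two-vertex subtree underlying an oriented edge (which necessarily misses any triple), derive $\alpha(\gamma)(e)=-\alpha(\gamma)(e)$ from equivariance and alternation, and then pass to the cohomological statement via the boundary resolution and Proposition~\ref{prop: degree_1}. The only cosmetic difference is that you cite Corollary~\ref{cor: a.e.} where the paper cites Corollary~\ref{cor: reso_bd} directly, but the former is just a convenient repackaging of the latter.
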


\subsection{The Spherical Cases}

Let $T=T_q$ with $q\geq 2$ and let $G=\aut(T)$. We adapt the notations from Section \ref{sec: sph_intro}. In particular, we fix a vertex $o\in T$ and let $K$ be the stabilizer of $o$. The goal of this section is to prove the following result: 

\begin{thm}\label{thm: spherical}
Let $z\in \C$ with $\mu(z)\in [-1,1]$. Then $\Hcb^n(G, \cH_z)=0$ for all $n\geq 1$. In other words, for any unitary spherical representation $(\pi,\cH_\pi)$, we have $\Hcb^n(G, \cH_\pi)=0$ for all $n\geq 1$.
\end{thm}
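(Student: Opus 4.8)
The plan is to reduce to the complex of $G$-equivariant alternating cochains on $(\bd T)^{*+1}$ via Corollary \ref{cor: reso_bd} and Corollary \ref{cor: a.e. 2}, and then show that every such cochain vanishes using the Flipping Lemma together with the admissibility/smoothness structure recorded in Corollary \ref{cor2: smooth}. By Proposition \ref{prop: degree_1} it suffices to treat $n\geq 2$. So fix $n\geq 2$ and let $\alpha:(\bd T)^{n+1}\to\cH_z$ be a bounded map that is alternating and $G$-equivariant on $D_{n+1}(\bd T)$; by Corollary \ref{cor: a.e.} it is enough to prove $\alpha\equiv 0$ on $D_{n+1}(\bd T)$.

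First I would dispose of the principal series, where $\mu(z)$ lies on the unit circle only when $\mathrm{Re}(z)=1/2$ but more to the point $\mu(z)\in[-1,1]$ splits into the ``interior'' range $\mu(z)\in(-1,1)$ and the two endpoint cases $\mu(z)=\pm1$, which are the trivial and the ``sign'' representation (these are one-dimensional, so $\cH_z$ is reflexive and in fact the Flipping Lemma applied to any $\gamma\in D_3(\bd T)$ with $S=\emptyset$ forces $\alpha(\gamma)=-\alpha(\gamma)$ after composing with a flip, killing it directly — actually one must be slightly careful since the trivial module is not made to vanish by a sign flip, but $\Hcb^n$ of a compact-by-amenable... in any case these endpoint modules are $L^2$-inductions from the trivial/sign character of a compact open subgroup up to $G$, or reduce to trees, and vanish by Lemma \ref{lem: rel_inj}). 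The heart is $\mu(z)\in(-1,1)$, where by Corollary \ref{cor2: smooth} the smooth vectors of $\cH_z'$ are exactly $\cK(\bd T)$, the span of the characteristic functions $\mathbf 1_{U(o,x)}$. The key point is that, because $\cM_{\pi_z}$ consists of single vertices, for \emph{any} triple $\gamma=(\gamma_0,\gamma_1,\gamma_2)\in D_3(\bd T)$ with median $m_\gamma$ there is a vertex $x$ (e.g. $x=m_\gamma$ or any vertex hit by the triple) — but what I actually want is: for each fixed vertex $v\in T$, one can find a flip element $h_{ij}$ fixing a whole neighborhood of $v$; more precisely, apply the Flipping Lemma with $S$ taken to be an arbitrarily large ball around $v$ that misses $(\gamma_0,\gamma_1,\gamma_2)$ (possible as soon as the ball does not reach $m_\gamma$, and one slides $v$ outward or chooses the ball radius appropriately). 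Then $h_{ij}\in G(S)$ fixes $v$ and all $\gamma_k$ with $k\neq i,j$, and swaps $\gamma_i,\gamma_j$, so $G$-equivariance plus alternation give $\pi_z'(h_{ij})\alpha(\gamma)=-\alpha(\gamma)$ while $h_{ij}$ acts trivially near $v$.

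The way I would turn this into ``$\alpha(\gamma)=0$'': evaluate against smooth vectors. For $\varphi=\mathbf 1_{U(o,x)}\in\cK(\bd T)$ and a flip $h=h_{ij}$ fixing $U(o,x)$ setwise (i.e.\ $h$ fixes $o$ and $x$ and enough around them), one has $\pi_z'(h)\varphi=\varphi$ up to the Poisson cocycle factor $P^z(o,ho,\cdot)=P^z(o,o,\cdot)=1$, hence
$$
\langle\alpha(\gamma),\varphi\rangle_z=\langle\pi_z'(h)\alpha(\gamma),\pi_z'(h)\varphi\rangle_z=\langle -\alpha(\gamma),\varphi\rangle_z=-\langle\alpha(\gamma),\varphi\rangle_z,
$$
so $\langle\alpha(\gamma),\varphi\rangle_z=0$. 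The remaining (and really the only) work is to check that for every basis vector $\varphi=\mathbf 1_{U(o,x)}$ one can genuinely arrange a flip $h_{ij}$ from the Flipping Lemma that fixes $o$, fixes $x$, and fixes $U(o,x)$ setwise: this is where I take $S$ in the Flipping Lemma to be the convex hull of $o$, $x$, and a large ball, note this $S$ misses $(\gamma_0,\gamma_1,\gamma_2)$ because two of the $\gamma_k$ branch off at $m_\gamma$ away from $S$ (after possibly replacing $S$ by a translate/enlargement — here one uses that a single vertex median can always be dodged by a subtree as in the discussion after the Flipping Lemma), and then $h_{ij}\in G(S)$ automatically fixes $U(o,x)$ pointwise-on-the-tree-part hence setwise on the boundary. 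Since $\cK(\bd T)=\cH_z^{\prime\infty}$ is dense in $\cH_z'$, vanishing of $\langle\alpha(\gamma),\varphi\rangle_z$ for all such $\varphi$ forces $\alpha(\gamma)=0$; as $\gamma\in D_{n+1}(\bd T)$ was arbitrary, $\alpha\equiv 0$ and $\Hcb^n(G,\cH_z)=0$ by Corollary \ref{cor: a.e.}, with $n=1$ covered by Proposition \ref{prop: degree_1}.

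\textbf{Main obstacle.} The delicate step is the geometric bookkeeping: given an \emph{arbitrary} triple $\gamma$ and an \emph{arbitrary} prescribed vertex-data $(o,x)$, produce a flip element simultaneously fixing the finite subtree spanned by $o$ and $x$ (so that it acts as the identity on $\mathbf 1_{U(o,x)}$ including the Poisson factor) while swapping two of the $\gamma_k$'s. The Flipping Lemma supplies a flip fixing any prescribed subtree $S$ that \emph{misses} the triple, so everything hinges on showing the subtree one needs — roughly the convex hull of $\{o,x\}$, possibly enlarged — can be taken to miss $(\gamma_0,\gamma_1,\gamma_2)$; when it does not (i.e.\ when $m_\gamma$ happens to lie on the geodesic $[o,x]$), one must instead use a different pair of indices or first apply an auxiliary element of $K=\stab_G(o)$ to move $x$, using $K$-invariance of $\varphi_z$ and hence of the inner product, to reduce to the generic position. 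I expect this case analysis to be the only real content beyond routine invocation of the earlier lemmas.
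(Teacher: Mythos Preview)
Your core idea for $\mu(z)\in(-1,1)$ — pair $\alpha(\gamma)$ directly against the characteristic functions $\mathbf 1_{U(o,x)}$ and kill each pairing with a flip that fixes $[o,x]$ — is sound and in fact \emph{simpler} than the paper's route: the paper introduces an averaging transform $\cP\alpha$ (integrating $\alpha$ over shadow boxes), shows $\cP\alpha$ takes values in $\cK(\bd T)$, and then recovers $\alpha$ almost everywhere from $\cP\alpha$ via a Lebesgue differentiation theorem for ultrametric spaces. Your direct pairing avoids all of this analytic machinery. What your approach buys is brevity; what the paper's buys is that the flip only needs to fix a single boundary point $\eta$ rather than a whole segment $[o,x]$, which makes the geometric hypothesis of the Flipping Lemma automatic.

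However, your handling of the ``main obstacle'' does not close the gap. When $m_\gamma$ lies in the interior of $[o,x]$ and two of the three rays $[m_\gamma,\gamma_i)$ run along the segment, $[o,x]$ genuinely fails to miss the triple, and neither of your proposed fixes works. Choosing a different triple of indices fails already for $n=2$ (only one triple), and even for $n\geq 3$ all medians can coincide. Moving $x$ by $k\in K$ fails because unitarity gives $(\alpha(\gamma),\mathbf 1_{U(o,x)})_z=(\alpha(k\gamma),\mathbf 1_{U(o,kx)})_z$: the median moves to $km_\gamma$, which still lies on $[o,kx]=k[o,x]$. The correct and much simpler repair is to use full $G$-equivariance first: choose $g\in G$ with $g m_\gamma=o$, note $\alpha(\gamma)=0$ iff $\pi_z'(g)\alpha(\gamma)=\alpha(g\gamma)=0$, and replace $\gamma$ by $g\gamma$. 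Once $m_\gamma=o$, the segment $[o,x]$ leaves $o$ in a single direction while the three rays leave in three distinct directions, so at least two of them are orthogonal to $[o,x]$ at $o$ and $[o,x]$ misses the triple for every $x$. Your argument then goes through verbatim. (This reduction to $m_\gamma=o$ is exactly what the paper does at the start of its proof.)

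Separately, your treatment of the endpoints $\mu(z)=\pm 1$ is muddled. Both representations are one-dimensional and $K$ acts trivially on them (for $\mu(z)=-1$ the character is $(-1)^{d_T(o,go)}$, and any flip from the Flipping Lemma with $S=\varnothing$ can be taken in $K$). So the flip argument \emph{does} give $\alpha(\gamma)=\pi_z(h)\alpha(\gamma)=\alpha(h\gamma)=-\alpha(\gamma)$ directly, including for the trivial module; your worry there is unfounded. The appeal to Lemma~\ref{lem: rel_inj} is misplaced: these one-dimensional modules are not $L^\infty$-inductions from a compact open subgroup.
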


We start by dealing with the edge cases where $\mu(z)=\pm 1$. This is another direct application of the flipping lemma:

\begin{prop}\label{prop: re_z=0}
Let $z\in \C$ with $\mu(z)\in \{\pm 1\}$. Then $\Hcb^n(G, \cH_z)=0$ for all $n\geq 1$. 
\begin{proof}
If $\mu(z)=1$, the spherical function $\varphi_z$ is the constant function $\mathbf{1}$ and $(\pi_z, \cH_z)$ is the trivial representation. If $\mu(z)=-1$, then $\varphi_z(g) = (-1)^{d_T(o,go)}$ for all $g\in G$. In both cases, the space $\cH_z$ is one-dimensional and $K$-invariant.

Let $n\geq 2$, and let $\alpha: (\bd T)^{n+1}\to \cH_z$ be a bounded map that is $G$-equivariant and alternating on $D_{n+1}(\bd T)$. Let  $\gamma = (\gamma_0,...,\gamma_n)\in D_{n+1}(\bd T)$. Applying Lemma \ref{lem: flip} with $S=\varnothing$, there exists some $g = g_{ij}\in K$ such that $(g\gamma_i, g\gamma_j)=(\gamma_j,\gamma_i)$ for some $i,j\in \{0,...,n\}$ and $g\gamma_k = \gamma_k$ for all $k\notin \{i,j\}$. Since $\cH_z$ is $K$-invariant and $\alpha$ is alternating and $G$-equivariant, we have
$$
\alpha(\gamma) = g\cdot\alpha(\gamma) = \alpha(g\gamma_0,...,g\gamma_n) = -\alpha(\gamma).
$$
It follows that $\alpha = 0$. This completes the proof by Corollary \ref{cor: reso_bd} and Proposition \ref{prop: degree_1}.
\end{proof}
\end{prop}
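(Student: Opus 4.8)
The plan is to reduce to a pointwise flipping argument on the boundary, exploiting that in both edge cases the coefficient module is one-dimensional with \emph{trivial} $K$-action. First I would record the structure of $\cH_z$: when $\mu(z)=1$ the representation $\pi_z$ is trivial, and when $\mu(z)=-1$ one has $\pi_z(g)=(-1)^{d_T(o,go)}$; in either case $\dim_\C\cH_z=1$ and $K$ acts trivially, since $d_T(o,ko)=0$ for $k\in K$. The degree-one statement $\Hcb^1(G,\cH_z)=0$ is immediate from Proposition \ref{prop: degree_1}, as $\cH_z$ is a reflexive (indeed finite-dimensional) Hilbert module. For $n\geq 2$ I would invoke Corollary \ref{cor: a.e.}: it suffices to prove that every bounded weak*-Borel-measurable map $\alpha\colon(\bd T)^{n+1}\to\cH_z$ which is $G$-equivariant and alternating on $D_{n+1}(\bd T)$ vanishes on $D_{n+1}(\bd T)$, hence almost everywhere.

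The key step is to apply the Flipping Lemma (Lemma \ref{lem: flip}) with the one-vertex subtree $S=\{o\}$ rather than with $S=\varnothing$. To do so I must verify that $\{o\}$ always misses the triple $(\gamma_0,\gamma_1,\gamma_2)$ for any $\gamma=(\gamma_0,\dots,\gamma_n)\in D_{n+1}(\bd T)$. This is a short geometric observation: at the median $m_\gamma$ the three rays $[m_\gamma,\gamma_0)$, $[m_\gamma,\gamma_1)$, $[m_\gamma,\gamma_2)$ leave in three distinct directions, and the single vertex $o$ can lie in the direction of at most one of them. Hence $(\gamma_i,o)_{m_\gamma}=0$ for at least two indices $i$, which is exactly the condition that $\{o\}$ misses the triple. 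The Flipping Lemma then supplies indices $i,j$ and an element
$$g\in G(\{o\})\cap\bigcap_{k\neq i,j}\stab_G(\gamma_k)=K\cap\bigcap_{k\neq i,j}\stab_G(\gamma_k)$$
with $g\gamma_i=\gamma_j$, $g\gamma_j=\gamma_i$, and $g\gamma_k=\gamma_k$ for $k\neq i,j$.

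With $g\in K$ in hand the sign argument closes at once. Since $K$ acts trivially on the one-dimensional module $\cH_z$, and since $\alpha$ is $G$-equivariant and alternating on $D_{n+1}(\bd T)$, I obtain
$$\alpha(\gamma)=g\cdot\alpha(\gamma)=\alpha(g\gamma_0,\dots,g\gamma_n)=-\alpha(\gamma),$$
where the first equality is triviality of the $K$-action, the second is $G$-equivariance, and the third is the alternating property applied to the transposition of the $i$-th and $j$-th coordinates. Thus $\alpha(\gamma)=0$ for every $\gamma\in D_{n+1}(\bd T)$, and Corollary \ref{cor: a.e.} yields $\Hcb^n(G,\cH_z)=0$ for all $n\geq 2$.

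The one point requiring care, and the reason I would not simply take $S=\varnothing$, is the case $\mu(z)=-1$, where $\pi_z$ is the nontrivial character $g\mapsto(-1)^{d_T(o,go)}$. A flipping element produced with $S=\varnothing$ could move $o$ an odd distance, contributing a factor $-1$ that cancels the alternating sign and renders the identity vacuous. Forcing the flip to fix $o$, i.e.\ to lie in $K=G(\{o\})$, makes $d_T(o,go)=0$ and neutralizes the character; this is the main (and essentially only) obstacle, and it is dissolved precisely by the observation that a single vertex always misses any boundary triple.
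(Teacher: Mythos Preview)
Your proof is correct and follows essentially the same approach as the paper's: identify $\cH_z$ as a one-dimensional $K$-trivial module, use Proposition \ref{prop: degree_1} for $n=1$, and for $n\geq 2$ apply the Flipping Lemma to force $\alpha(\gamma)=-\alpha(\gamma)$. Your choice of $S=\{o\}$ (together with the verification that a single vertex always misses any boundary triple) is in fact slightly more careful than the paper's phrasing, which invokes the lemma with $S=\varnothing$ yet asserts $g\in K$; your version makes the membership $g\in K=G(\{o\})$ transparent and renders the discussion of the sign character in the $\mu(z)=-1$ case unnecessary.
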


\begin{rmk}
\emph{The vanishing of continuous bounded cohomology of $G$ with the trivial coefficient was already proved by Bucher and Monod in \cite{BM19}, where they showed that $\Hcb^*(H, \R)=0$ in all positive degrees for all closed subgroup $H\leq G$ acting strongly transitively on $\bd T$. Note that our proof of the previous proposition, while simpler, does not extend to all such closed subgroups.}
\end{rmk}

From now on, we assume that $\mu(z)\in (-1,1)$. Let $\cH$ be a Hilbert space, and let $\alpha: (\bd T)^{n+1}\to \cH$ be a bounded measurable function. Consider the bounded map $\cP\alpha: T^{n+1}\to \cH$ on $(n+1)$-tuples of vertices defined by
$$
\cP\alpha(x_0,...,x_n) := \frac{1}{\prod_{j=0}^n\mu_o(U(o,x_j))}\cdot \int_{U(o,x_0)\times...\times U(o,x_n)} \alpha(\gamma)\;d\mu_o^{\otimes ({n+1})}(\gamma).
$$
We call $\cP\alpha$ the \textbf{averaging transform} of $\alpha$. 

Now suppose that $\cH=\cH_z'$. Since the measure $\mu_o$ is invariant under $K$ and permutations of coordinates, the averaging transform $\cP\alpha$ is $K$-equivariant (respectively alternating) whenever $\alpha$ is $K$-equivariant (resp. alternating) on $D_{n+1}(\bd T)$. 


We will need the following two facts about the averaging transform:

\begin{lem}\label{cor: smooth}
Let $\alpha:  (\bd T)^{n+1}\to \cH_z'$ be a bounded measurable function that is $K$-equivariant on $D_{n+1}(\bd T)$. Then $\cP\alpha(x_0,...,x_n)\in \cK(\bd T)$ for any $x_0,...,x_n\in T$.
\begin{proof}
By Lemma \ref{lem: smooth}, it is enough to show that $\cP\alpha(x_0,...,x_n)$ is smooth. Choose 
$$H:= K\cap\bigcap_{j=0}^n\stab_G(x_j).$$
Then $H$ is a compact open subgroup of $G$. Since $\cP\alpha$ is $K$-equivariant, we have that
$$\pi_z'(h)\cdot\cP\alpha(x_0,...,x_n) = \cP\alpha(hx_0,...,hx_n)=\cP\alpha(x_0,...,x_n)$$
for all $h\in H$, and so $\cP\alpha(x_0,...,x_n)$ is smooth. 
\end{proof}
\end{lem}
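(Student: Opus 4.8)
The statement to prove is Lemma~\ref{cor: smooth}: for a bounded measurable $\alpha\colon(\bd T)^{n+1}\to\cH_z'$ that is $K$-equivariant on $D_{n+1}(\bd T)$, the value $\cP\alpha(x_0,\dots,x_n)$ lies in $\cK(\bd T)$ for every tuple of vertices $x_0,\dots,x_n$.

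The plan is to invoke Corollary~\ref{cor2: smooth}, which identifies $(\cH_z')^\infty$ with $\cK(\bd T)$ when $\mu(z)\in(-1,1)$, so it suffices to show that $\cP\alpha(x_0,\dots,x_n)$ is a smooth vector, i.e.\ is fixed by some compact open subgroup of $G$. The natural candidate is $H:=K\cap\bigcap_{j=0}^n\stab_G(x_j)$, which is an intersection of finitely many compact open subgroups and hence compact open. First I would record that the averaging transform is $K$-equivariant: since the probability measure $\mu_o$ on $\bd T$ is $K$-invariant and the product measure is invariant under permutations of coordinates, for any $k\in K$ the change of variables $\gamma\mapsto k^{-1}\gamma$ carries $U(o,x_j)$ to $U(o,k^{-1}x_j)$ and preserves the normalising constants (which depend only on $d_T(o,x_j)$), so that $\pi_z'(k)\cdot\cP\alpha(x_0,\dots,x_n)=\cP\alpha(kx_0,\dots,kx_n)$; here one uses the $K$-equivariance of $\alpha$ on the conull set $D_{n+1}(\bd T)$, which is enough since the integrals only see almost-every $\gamma$. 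Then for $h\in H$ one has $hx_j=x_j$ for all $j$, so $\pi_z'(h)\cdot\cP\alpha(x_0,\dots,x_n)=\cP\alpha(x_0,\dots,x_n)$, exhibiting the required fixed vector.

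Having shown $\cP\alpha(x_0,\dots,x_n)\in(\cH_z')^\infty$, Corollary~\ref{cor2: smooth} immediately gives $\cP\alpha(x_0,\dots,x_n)\in\cK(\bd T)$, completing the proof.

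I do not expect a serious obstacle here; the one point requiring a little care is the justification that $K$-equivariance of $\alpha$ merely almost everywhere (on $D_{n+1}(\bd T)$, which is conull) still yields the honest identity $\pi_z'(k)\cdot\cP\alpha(x_0,\dots,x_n)=\cP\alpha(kx_0,\dots,kx_n)$ for every $k$ — but this is immediate because the averaging transform is defined by an integral against $\mu_o^{\otimes(n+1)}$, so modifying $\alpha$ on a null set does not change $\cP\alpha$, and the defining identity for $k$ only needs $\alpha(k\gamma)=k\cdot\alpha(\gamma)$ for $\mu_o^{\otimes(n+1)}$-almost every $\gamma$.
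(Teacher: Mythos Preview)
Your proof is correct and follows essentially the same approach as the paper: reduce to showing smoothness via Corollary~\ref{cor2: smooth} (the paper cites Lemma~\ref{lem: smooth}, which amounts to the same thing), take $H=K\cap\bigcap_j\stab_G(x_j)$, and use the $K$-equivariance of $\cP\alpha$ to conclude that $\cP\alpha(x_0,\dots,x_n)$ is $H$-fixed. Your additional remarks justifying the $K$-equivariance of $\cP\alpha$ and handling the almost-everywhere caveat are details the paper states more tersely just before the lemma, so there is no substantive difference.
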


\begin{lem}\label{lem: Poisson_inv}
Let $\cH$ be a Hilbert space, and let $\alpha: (\bd T)^{n+1}\to \cH$ be a bounded measurable function. Then for any $\varphi\in \cH$, there is a conull subset $X_\varphi\subset (\bd T)^{n+1}$ such that for all $\gamma = (\gamma_0,...,\gamma_n)\in X_\varphi$, one has
$$
\inprod{\alpha(\gamma), \varphi}_\cH = \lim_{t\to+\infty}\inprod{ \cP\alpha(\gamma_0(o,t),...,\gamma_n(o,t)),\; \varphi}_\cH\;.
$$
\begin{proof}
For $\gamma,\eta\in \bd T$, define $d_o(\gamma,\eta):=q^{-(\gamma,\eta)_o}$. Then $d_o$ is an ultrametric that realizes the topology on $\bd T$. Then the product metric on $(\bd T)^{n+1}$ defined by
$$d_{o,n+1}((\gamma_0,...,\gamma_{n}),(\eta_0,...,\eta_n)):=\max_{j=0,...,n} d_o(\gamma_j,\eta_j)$$ 
is also an ultrametric. Denote by $B^{n+1}(\gamma, r)$ the open ball of radius $r>0$ centered around $\gamma\in (\bd T)^{n+1}$. Then for any $\gamma = (\gamma_0,...,\gamma_n)\in (\bd T)^{n+1}$, one has
\begin{align}\label{diff}
    \cP\alpha(\gamma_0(o,t),...,\gamma_n(o,t)) = \frac{1}{\mu_o^{\otimes(n+1)}(B^{n+1}(\gamma, q^{-(t-1)}))}\cdot \int_{B^{n+1}(\gamma, q^{-(t-1)})}\alpha \;d{\mu_o^{\otimes{(n+1)}}}.
\end{align}
Now for each $\varphi\in \cH$, consider the bounded measurable function $\alpha_\varphi: (\bd T)^{n+1}\to \C$ defined by $\gamma \mapsto \inprod{\alpha(\gamma), \varphi}_\cH$. Then by (\ref{diff}) and the Lebesgue differentiation theorem for ultrametric spaces (see Theorem 9.1 in \cite{Sim12}, which is based on section 2.8-2.9 in \cite{Fed69}), there is a conull subset $X_\varphi\subset(\bd T)^{n+1}$ such that for any $\gamma=(\gamma_0,...\gamma_n)\in X_\varphi$,
$$\alpha_\varphi(\gamma) = \lim_{t\to \infty}\cP\alpha_\varphi(\gamma_0(o,t),...,\gamma_n(o,t)).$$
The lemma now follows from the fact that 
$\cP\alpha_\varphi(x_0,...,x_n)=\inprod{\cP\alpha(x_0,...,x_n), \;\varphi}_\cH$ for all $x_0,...,x_n\in T$. 
\end{proof}
\end{lem}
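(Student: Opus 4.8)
The statement to prove is Lemma~\ref{lem: Poisson_inv}: for a bounded measurable $\alpha:(\bd T)^{n+1}\to\cH$ and any fixed $\varphi\in\cH$, there is a conull set $X_\varphi$ on which $\inprod{\alpha(\gamma),\varphi}_\cH$ equals the limit as $t\to\infty$ of $\inprod{\cP\alpha(\gamma_0(o,t),\dots,\gamma_n(o,t)),\varphi}_\cH$.

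\medbreak

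\noindent\emph{Proof proposal.} The plan is to reduce the vector-valued statement to a scalar Lebesgue-differentiation statement by pairing against the fixed vector $\varphi$, and then to recognize the averaging transform evaluated along the rays $\gamma_j(o,t)$ as averages over shrinking metric balls. First I would fix $\varphi\in\cH$ and introduce the scalar function $\alpha_\varphi(\gamma):=\inprod{\alpha(\gamma),\varphi}_\cH$, which is bounded and measurable on $(\bd T)^{n+1}$ since $\alpha$ is bounded measurable and the inner product against a fixed vector is continuous. The crucial linearity observation is that the averaging transform commutes with this pairing: because $\cP$ is defined by integrating $\alpha$ against a probability measure and $\inprod{\cdot,\varphi}_\cH$ is a bounded linear functional, one has
$$
\inprod{\cP\alpha(x_0,\dots,x_n),\varphi}_\cH=\cP\alpha_\varphi(x_0,\dots,x_n)
$$
for all vertices $x_0,\dots,x_n$, where on the right $\cP$ denotes the (scalar) averaging transform of $\alpha_\varphi$. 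This identity is the key step because it lets us work entirely with the scalar function $\alpha_\varphi:(\bd T)^{n+1}\to\C$ and forget the Hilbert space.

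\medbreak

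\noindent Next I would set up the metric geometry exactly as in the statement. Define $d_o(\gamma,\eta):=q^{-(\gamma,\eta)_o}$ on $\bd T$; this is an ultrametric inducing the boundary topology, and the product $d_{o,n+1}=\max_j d_o(\cdot_j,\cdot_j)$ on $(\bd T)^{n+1}$ is again an ultrametric. The heart of the argument is the combinatorial identity (\ref{diff}): the ray vertices $\gamma_j(o,t)$ satisfy $U(o,\gamma_j(o,t))=\{\eta:(\gamma_j,\eta)_o\geq t\}=\{\eta:d_o(\gamma_j,\eta)\leq q^{-t}\}$, which is precisely the closed $d_o$-ball of radius $q^{-t}$ around $\gamma_j$, equal to the open ball of radius $q^{-(t-1)}$ since $d_o$ takes values in the discrete set $q^{\Z}$. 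Taking the product over $j$ identifies the product cylinder $U(o,\gamma_0(o,t))\times\cdots\times U(o,\gamma_n(o,t))$ with the $d_{o,n+1}$-ball $B^{n+1}(\gamma,q^{-(t-1)})$, and the normalizing factor $\prod_j\mu_o(U(o,\gamma_j(o,t)))$ becomes $\mu_o^{\otimes(n+1)}(B^{n+1}(\gamma,q^{-(t-1)}))$; this yields (\ref{diff}) verbatim. Thus $\cP\alpha_\varphi(\gamma_0(o,t),\dots,\gamma_n(o,t))$ is exactly the average of $\alpha_\varphi$ over the shrinking ball $B^{n+1}(\gamma,q^{-(t-1)})$.

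\medbreak

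\noindent Finally I would invoke the Lebesgue differentiation theorem for ultrametric (more generally, metric measure) spaces. The product measure $\mu_o^{\otimes(n+1)}$ on the compact ultrametric space $((\bd T)^{n+1},d_{o,n+1})$ is a finite Borel measure, and for an ultrametric the family of balls is automatically a differentiation basis of Vitali type, so the cited Theorem~9.1 in \cite{Sim12} (built on \cite{Fed69}, 2.8--2.9) applies to the bounded measurable integrand $\alpha_\varphi$. It produces a conull set $X_\varphi\subset(\bd T)^{n+1}$ consisting of Lebesgue points of $\alpha_\varphi$, at each of which the ball-averages converge to the value $\alpha_\varphi(\gamma)$ as the radius tends to $0$; letting $t\to\infty$ drives $q^{-(t-1)}\to0$, giving
$$
\alpha_\varphi(\gamma)=\lim_{t\to\infty}\cP\alpha_\varphi(\gamma_0(o,t),\dots,\gamma_n(o,t))\qquad(\gamma\in X_\varphi).
$$
Combining this with the pairing identity from the first step recovers the asserted formula on $X_\varphi$. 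I expect the main obstacle to be the careful verification that the radii match up so that the cylinder sets are genuinely the metric balls of the stated radius, i.e.\ tracking the off-by-one in the exponent between $q^{-t}$ and $q^{-(t-1)}$ and confirming the ultrametric ball structure; once (\ref{diff}) is established cleanly, the measure-theoretic differentiation step is a direct citation and the scalar-to-vector reduction is routine.
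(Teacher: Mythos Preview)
Your proposal is correct and follows essentially the same approach as the paper: reduce to the scalar function $\alpha_\varphi=\inprod{\alpha(\cdot),\varphi}$, identify the averaging transform along the rays with ball-averages via the ultrametric $d_{o,n+1}$ to obtain (\ref{diff}), and apply the Lebesgue differentiation theorem for ultrametric spaces (\cite{Sim12}, \cite{Fed69}). Your discussion of the closed/open ball radius and the off-by-one exponent is a helpful elaboration of a point the paper leaves implicit.
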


\begin{proof}[Proof of Theorem \ref{thm: spherical}]
Let $z\in \C$ with $\mu(z)\in (-1,1)$ and fix $n\geq 2$. Let $\alpha: (\bd T)^{n+1}\to \cH'_z$ be a bounded measurable map that is alternating and $G$-equivariant on $D_{n+1}(\bd T)$. For each vertex $x\in T$, denote by $B_x\subset D_{n+1}(\bd T)$ the subset consists of all $\gamma = (\gamma_0,...,\gamma_n)$ such that the medium of $\gamma_0,\gamma_1,\gamma_2$ is $x$. Note that $B_x$ has positive measure for any $x\in T$. 

We first show that it is enough to find a subset $B_o'\subset B_o$ such that $\mu_o(B_o\setminus B_o')=0$ and $\alpha(\gamma)=0$ for all $\gamma\in B_o'$. Suppose that such $B_o'$ exists. For each vertex $x\in T$, pick $g_x\in G$ such that $g_xo=x$. Then $g_xB_o=B_x$ for all $x\in T$ and $\mu_o(g_x(B_o\setminus B_o'))=0$ since $\mu_o$ is quasi-invariant. Consider the conull subset
$X_0 := \bigsqcup_{x\in T} g_xB_o'$ in $(\bd T)^{n+1}$. Then for any $x\in T$ and $\gamma\in B_o'$, we have
$$\alpha(g_x\gamma) = \pi_z'(g_x)\alpha(\gamma) = 0$$
by the $G$-equivariance of $\alpha$. This shows that $\alpha=0$ on the conull subset $X_0$, which finishes the proof by Corollary \ref{cor: reso_bd} and Proposition \ref{prop: degree_1}.

It remains to find such $B_o'$. Choose a countable dense subset $\cD_z$ in the separable Hilbert space $\cH_z'$. By Lemma \ref{lem: Poisson_inv}, there exists a conull subset $B\subset (\bd T)^{n+1}$ such that for all $\gamma\in B$,
$$(\alpha(\gamma), \varphi)_z = \lim_{t\to+\infty}(\cP\alpha(\gamma_0(o,t),...,\gamma_n(o,t)),\; \varphi)_z$$
for all $\varphi\in \cD_z$. Set $B_o':= B\cap B_{o}$. Then by the density of $\cD_z$ in $\cH_z'$, it suffices to show that for all $\gamma = (\gamma_0,...,\gamma_n)\in B_o'$, one has $(\alpha(\gamma), \varphi)_z = 0$ for all $\varphi\in \cD_z$. In fact, we will show that
$$\cP\alpha(\gamma_0(o,t),...,\gamma_n(o,t))=0$$
for all $(\gamma_0,...,\gamma_n)\in B_o'$ and $t\in \N$. By Lemma \ref{cor: smooth}, the element $\cP\alpha(\gamma_0(o,t),...,\gamma_n(o,t))$ is in $\cK(\bd T)$. In particular, it is a map from $\bd T$ to $\C$. Take any $\eta\in \bd T$. Then at least two of the three Gromov products $(\gamma_0, \eta)_o$, $(\gamma_1, \eta)_o$ and $(\gamma_2, \eta)_o$ are $0$. By the flipping lemma (\ref{lem: flip}), there are two indices $i,j\in \{0,...,n\}$ and an element 
$$h_{ij}\in K\cap\stab_G(\eta)\cap\bigcap_{k\ne i,j}\stab_G(\gamma_k)$$
such that $h_{ij}\cdot{\gamma_j} = \gamma_i$ and $h_{ij}\cdot\gamma_i=\gamma_j$. Since $d_T(\gamma_k(o,t),o)=t$ for all $k\in \{0,...,n\}$ and $t\in \N$, we have $h_{ij}\cdot\gamma_i(o,t)=h_{ij}\cdot\gamma_j(o,t)$ and vice versa for all $t\in \N$. Finally, since $\cP\alpha$ is $K$-equivariant, we have that
\begin{align*}
  \cP\alpha(\gamma_0(o,t),...,\gamma_n(o,t))(\eta)&=-\cP\alpha(h_{ij}\cdot\gamma_0(o,t),...,h_{ij}\cdot\gamma_n(o,t))(h_{ij}\cdot\eta)\\
  &=-\cP\alpha(\gamma_0(o,t),...,\gamma_n(o,t))(\eta).  
\end{align*}
for all $t\in \N$. It follows that $\cP\alpha(\gamma_0(o,t),...,\gamma_n(o,t))=0$ for all $t\in \N$.
\end{proof}

\subsection{The Cuspidal Cases}

\subsubsection{Vanishing of Bounded Cohomology in Higher Degrees}

Let $T=T_q$ with $q\geq 2$ and let $G=\aut(T)$. The goal of this section is to prove that the continuous bounded cohomology $\Hcb^n(G, \cH_\pi)$ vanishes for all $n\geq 3$ when $(\pi, \cH_\pi)$ is a unitary cuspidal representation of $\aut(T)$. In fact, we will prove the following slightly more general statement:

\begin{thm}\label{prop: vanishing_cusp}
Let $S_0$ be a finite complete subtree of $T$, and let $(\omega,V)$ be an irreducible unitary representation of $\aut(S_0)$. Then
$$\Hcb^n(\aut(T), \ind_{S_0}\,\omega) = 0$$
for all $n\geq 3$. 
\end{thm}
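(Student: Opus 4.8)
The plan is to exhibit an explicit $G$-equivariant contracting homotopy on the boundary resolution in every degree $\geq 3$. By Corollary~\ref{cor: reso_bd} I compute $\Hcb^{*}(\aut(T),\cH_\pi)$ (writing $\cH_\pi:=\ind_{S_{0}}\omega$) from the complex $\big(L^{\infty}_{\mathrm{alt}}((\bd T)^{*+1},\cH_\pi)^{G},d\big)$, and by Corollary~\ref{cor: a.e. 2} I work throughout with the unique honest bounded $G$-equivariant alternating representatives, which are defined and weak$^{*}$-continuous on the conull sets $D_{*+1}(\bd T)$. Write $\mathcal S$ for the $G$-orbit of $S_{0}$, so that $\cH_\pi$ is the Hilbert space of $\ell^{2}$-sections of the $G$-equivariant Hilbert bundle over $\mathcal S\simeq G/\wt G(S_{0})$ with fibre $V_{S}$ (a copy of $V$) over $S$; the setwise stabilizer $\wt G(S)$ acts on $V_{S}$ through $\omega\circ p_{S}$, and in particular $G(S)$ acts trivially on $V_{S}$. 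For $\phi\in\cH_\pi$ write $\phi(S)\in V_{S}$ for its value over $S$.

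The first input is a support estimate obtained from the Flipping Lemma~\ref{lem: flip}, exactly as in the proofs of Theorems~\ref{thm: special} and~\ref{thm: spherical}: if $\alpha$ is an equivariant alternating cochain of degree $n\geq 2$ and $S\in\mathcal S$ misses some triple among the coordinates of $\gamma\in D_{n+1}(\bd T)$, then producing a flip $h\in G(S)$ of two of the coordinates and using that $h$ acts trivially on $V_{S}$ forces $\alpha(\gamma)(S)=0$. Hence $\alpha(\gamma)$ is supported on $\Sigma(\gamma):=\{S\in\mathcal S: S\text{ hits every triple of }\gamma\}$. Since a finite complete subtree hits a triple only when the corresponding median is an interior vertex of it, every $S\in\Sigma(\gamma)$ contains the median of $\gamma_{0},\gamma_{1},\gamma_{2}$ as an interior vertex, so $|\Sigma(\gamma)|\leq C$ for a constant $C=C(q,S_{0})$; moreover $\Sigma(\eta,\gamma_{1},\dots,\gamma_{n})\subseteq\Sigma(\gamma_{1},\dots,\gamma_{n})$, and the latter is finite once $n\geq 3$.

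Now attach to each $S\in\mathcal S$ the $\wt G(S)$-invariant probability measure $\nu_{S}:=|\cV_{S}|^{-1}\sum_{v\in\cV_{S}}\mu_{v}$ on $\bd T$; it satisfies $g_{*}\nu_{S}=\nu_{gS}$ for all $g\in G$. For a cochain $\alpha$ of degree $n\geq 3$ define a cochain $s\alpha$ of degree $n-1$ by
$$
(s\alpha)(\gamma_{1},\dots,\gamma_{n})(S):=\int_{\bd T}\alpha(\eta,\gamma_{1},\dots,\gamma_{n})(S)\,d\nu_{S}(\eta).
$$
Then $s\alpha$ is alternating; it is $G$-equivariant precisely because $g_{*}\nu_{S}=\nu_{gS}$ is compatible with the way $G$ permutes fibres; it is bounded with $\|s\alpha\|_{\infty}\leq\sqrt{C}\,\|\alpha\|_{\infty}$, since for fixed $\gamma_{1},\dots,\gamma_{n}$ with $n\geq 3$ the integrand over $S$ vanishes outside the set $\Sigma(\gamma_{1},\dots,\gamma_{n})$ of size $\leq C$; and it is measurable, by the weak$^{*}$-continuity of $\alpha$ on $D_{n+1}(\bd T)$, the local constancy of $\Sigma(\gamma_{1},\dots,\gamma_{n})$ in $\gamma$, and dominated convergence. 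A direct computation — carried out fibrewise over $S$, where it collapses to the elementary homotopy identity $\sigma d+d\sigma=\mathrm{id}$ for the operator $\sigma$ of integration against the probability measure $\nu_{S}$ in the first slot — then gives $sd\alpha+ds\alpha=\alpha$ for every cochain $\alpha$ of degree $n\geq 3$ (all the operators $s$ appearing are applied in degree $\geq 3$, hence are bounded). Therefore every cocycle of degree $n\geq 3$ is the coboundary of a genuine bounded cochain, and $\Hcb^{n}(\aut(T),\ind_{S_{0}}\omega)=0$ for all $n\geq 3$.

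The main difficulty is designing the homotopy: there is no $G$-invariant probability measure on $\bd T$ against which to integrate, so the usual ``cone off a basepoint'' trick is unavailable; replacing a global measure by the $S$-local measures $\nu_{S}$ restores $G$-equivariance at the cost of needing the Flipping-Lemma support bound to guarantee boundedness of $s$. That bound requires at least three ``frozen'' boundary variables, which is exactly why the method yields vanishing only in degrees $\geq 3$ and says nothing in degree $2$ — consistent with the nonvanishing allowed by Theorem~\ref{thm: B}. (Irreducibility of $\omega$ is not used in this argument.)
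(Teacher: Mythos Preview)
Your argument is correct and takes a genuinely different route from the paper's. Both proofs rest on the same support estimate coming from the Flipping Lemma (this is the paper's Lemma~\ref{lem: finite_supp}): any equivariant alternating $(n+1)$-cochain evaluated at $\gamma$ is supported on the finitely many $S\in\cM_\pi$ that hit every triple of $\gamma$. Where the two proofs diverge is in how this estimate is exploited. The paper embeds $\ell^{2}(\cM_\pi,V)$ into $\ell^{\infty}(\cM_\pi,V)$, invokes the relative injectivity of the $L^{\infty}$-induced module (Lemma~\ref{lem: rel_inj}) to obtain a primitive $\alpha$ in the $\ell^{\infty}$ complex, and then uses the support estimate a posteriori to show that $\alpha$ actually lies in $\ell^{2}$ with a uniform bound. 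You instead build an explicit contracting homotopy $s$ directly on the $\ell^{2}$ complex by integrating the first boundary variable against the fibre-dependent probability measures $\nu_{S}$, and use the support estimate a priori to see that $s$ lands in bounded $\ell^{2}$-valued cochains. Your approach is more self-contained (it bypasses Lemma~\ref{lem: rel_inj} entirely) and makes the mechanism transparent: the obstruction to a global ``cone off a point'' homotopy is the absence of a $G$-invariant measure on $\bd T$, and your fibrewise choice $\nu_{S}$ repairs equivariance while the support bound repairs boundedness. The paper's approach, on the other hand, separates concerns cleanly (vanishing in $\ell^{\infty}$ is soft, promotion to $\ell^{2}$ is combinatorial) and generalizes immediately to any situation where an analogous finite-support lemma is available. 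Your closing remark that irreducibility of $\omega$ is unused is also correct and applies equally to the paper's proof.
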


Write $\pi = \ind_{S_0} \omega$. Then we may identify $\cM_\pi$ with the quotient $G/\wt{G}(S_0)$. Fix a set-theoretic section $s: G/\wt{G}(S_0)\simeq \cM_\pi\to G$ that sends $S_0$ to the identity $e_G$ of $G$. As introduced in Section \ref{sec: cuspidal_intro}, the representation $\pi$ can be described by the $G$-module $\cH_\pi:= \ell^2(\cM_\pi, V)$, with the action of $G$ defined by
$$(\pi(g)f)(S) = \wt{\omega}(s(S)^{-1}gs(g^{-1}S))f(g^{-1}S)$$
for all $g\in G$, $f\in \cH_\pi$ and $S\in \cM_{\pi}$.

We will need the following essential observation, which is another application of the flipping lemma (\ref{lem: flip}):

\begin{lem}\label{lem: finite_supp}
Let $n\geq 3$. Suppose that $\alpha: D_n(\bd T)\to\ell^{\infty}(\cM_\pi,V)$ is a $G$-equivariant and alternating map, then for any $\gamma = (\gamma_0,...,\gamma_{n-1})\in D_n(\bd T)$
$$\mathrm{supp}(\alpha(\gamma)) \subset \{S\in\cM_\pi: S\;\,\mathrm{hits}\;\, (\gamma_i,\gamma_j,\gamma_k)\}$$
for all distinct $i,j,k\in \{0,...,n-1\}$, where $m_\gamma(i,j,k)\in T$ denotes the median of $\gamma_i,\gamma_j$ and $\gamma_k$. In particular, the map $\alpha(\gamma)$ is finitely supported and is an element in $\ell^2(\cM_\pi, V)$. 
\begin{proof}
Without loss of generality, take $(i,j,k)=(0,1,2)$ and write $m_\gamma:=m_\gamma(0,1,2)$. Let $m_\gamma\in T$ be the median of $\gamma_0,\gamma_1$, and $\gamma_2$, and let $S$ be a finite complete subtree in $\cM_\pi$ that misses $(\gamma_0,\gamma_1, \gamma_2)$. By Lemma \ref{lem: flip}, there are two distinct indices $i,j\in \{0,...,n\}$ and an element 
$$h_{ij}\in G(S)\cap \bigcap_{k\ne i,j} \stab_G(\gamma_k)$$
such that $h_{ij}\cdot\gamma_i=\gamma_j$ and $h_{ij}\cdot \gamma_j = \gamma_i$. Since $\alpha$ is $G$-equivariant and alternating, we have
\begin{align*}
    \alpha(\gamma)(S) =  -\alpha(h_{ij}\gamma)(h_{ij}S) = -\wt{\omega}(s(S)^{-1}h_{ij}s(S))\cdot \alpha(\gamma)(S).
\end{align*}
But observe that $s(S)^{-1}h_{ij}s(S)\in G(S_0)$, and so $\wt{\omega}(s(S)^{-1}h_{ij}s(S)) = \id_V$. It follows that $\alpha(\gamma)(S)=0$.
\end{proof}
\end{lem}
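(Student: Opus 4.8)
The plan is to prove the support containment directly: I would show that $\alpha(\gamma)(S)=0$ for every $S\in\cM_\pi$ that \emph{misses} the triple $(\gamma_i,\gamma_j,\gamma_k)$, which is exactly the assertion $\mathrm{supp}(\alpha(\gamma))\subset\{S:S\text{ hits }(\gamma_i,\gamma_j,\gamma_k)\}$. Since $\alpha$ is alternating, permuting the coordinates of $\gamma$ only rescales $\alpha(\gamma)$ by a sign and leaves its support unchanged, so there is no loss in reducing to the case $(i,j,k)=(0,1,2)$ and writing $m_\gamma$ for the median of $\gamma_0,\gamma_1,\gamma_2$. Fix such an $S$ missing $(\gamma_0,\gamma_1,\gamma_2)$; the entire burden is to show $\alpha(\gamma)(S)=0$.

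The engine is the Flipping Lemma (\ref{lem: flip}). Applied to the tuple $\gamma$ together with the subtree $S$, which misses $(\gamma_0,\gamma_1,\gamma_2)$ by hypothesis, it yields two distinct indices $a,b$ and an element
$$
h\in G(S)\cap\bigcap_{l\ne a,b}\stab_G(\gamma_l)
$$
with $h\gamma_a=\gamma_b$ and $h\gamma_b=\gamma_a$. Thus the tuple $h\gamma$ is exactly $\gamma$ with its $a$-th and $b$-th entries transposed, so the alternating property gives $\alpha(h\gamma)=-\alpha(\gamma)$, while $G$-equivariance gives $\alpha(h\gamma)=\pi(h)\alpha(\gamma)$; combining these yields $\pi(h)\alpha(\gamma)=-\alpha(\gamma)$. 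I would then evaluate both sides at $S$. Because $h\in G(S)$ fixes $S$ set-wise we have $h^{-1}S=S$, and the formula for the induced action collapses to
$$
\big(\pi(h)\alpha(\gamma)\big)(S)=\wt{\omega}\big(s(S)^{-1}h\,s(S)\big)\,\alpha(\gamma)(S).
$$

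The decisive point to verify is that the twist $\wt{\omega}\big(s(S)^{-1}h\,s(S)\big)$ is trivial. Since the section $s$ carries $S_0$ to $S$ and $h$ stabilizes $S$ \emph{pointwise}, the conjugate $s(S)^{-1}h\,s(S)$ stabilizes $S_0$ pointwise, i.e. lies in $G(S_0)=\ker p_{S_0}$; consequently $\wt{\omega}\big(s(S)^{-1}h\,s(S)\big)=\omega(e)=\id_V$. Feeding this back, the evaluation at $S$ reads $\alpha(\gamma)(S)=-\alpha(\gamma)(S)$, forcing $\alpha(\gamma)(S)=0$, and the support containment follows for every choice of distinct $i,j,k$. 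For the final clause I would invoke the remark following the definition of ``hits'': a finite complete subtree hits a triple only if it passes through the median, and only finitely many members of $\cM_\pi$ pass through a given vertex, so each hit-set is finite; hence $\alpha(\gamma)$ is finitely supported, and being bounded it lies in $\ell^2(\cM_\pi,V)$. The one step that genuinely requires care is the conjugation bookkeeping $s(S)^{-1}G(S)s(S)=G(S_0)$ combined with the correct unwinding of the induced action $\pi$ at the fixed coset $S$; once that is in place, the remainder is a direct application of the Flipping Lemma and the alternating/equivariance hypotheses.
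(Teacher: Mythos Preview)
Your proposal is correct and follows essentially the same route as the paper's proof: reduce to $(i,j,k)=(0,1,2)$, apply the Flipping Lemma to obtain $h\in G(S)$ transposing two coordinates, and then combine $G$-equivariance with alternation to get $\alpha(\gamma)(S)=-\wt{\omega}(s(S)^{-1}hs(S))\alpha(\gamma)(S)$, where the twist is trivial because $s(S)^{-1}G(S)s(S)=G(S_0)$. Your write-up is in fact slightly more explicit than the paper's, particularly in justifying the conjugation identity and the finiteness clause.
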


\begin{proof}[Proof of Theorem \ref{prop: vanishing_cusp}]
Fix $n\geq 3$ and let $c\in L^\infty_{\alt}((\bd T)^{n+1}, \ell^2(\cM_\pi, V))^G$ be a bounded cocycle. Since $\ell^2(\cM_\pi, V)$ is a submodule of $\ell^\infty(\cM_\pi, V)$, the function class $c$ can be seen as an element in $L^\infty_{\alt, \mathrm{w}^*}((\bd T)^{n+1}, \ell^\infty(\cM_\pi, V))^G$. By Lemma \ref{lem: rel_inj}, we have $\Hcb^n(G, \ell^\infty(\cM_\pi, V)) = 0$ for all $n\geq 1$, so the function class $c=d\alpha$ is a coboundary in $ L^\infty_{\alt}((\bd T)^{n+1}, \ell^\infty(\cM_\pi, V))^G$. By Lemma \ref{lem: a.e.} and Remark \ref{rmk: a.e. orbits}, there is a weak*-Borel-measurable representative $(\bd T)^n\to \ell^\infty(\cM_\pi, V)$ of $\alpha$, still denoted by $\alpha$, such that $\alpha|_{D_n(\bd T)}$ is bounded, alternating and $G$-equivariant. The restriction $\alpha|_{D_n(\bd T)}$ ranges in $\ell^2(\cM_\pi, V)$ by Lemma \ref{lem: finite_supp} and is measurable as a map $D_n(\bd T)\to \ell^2(\cM_\pi, V)$.

We now show that $\alpha(D_n(\bd T))$ is bounded in $\ell^2(\cM_\pi, V)$. Let $\gamma=(\gamma_0,...,\gamma_{n-1})\in D_n(\bd T)$ and fix any vertex $x\in T$. Define
$$\cM_\pi(\gamma):= \{S\in \cM_\pi: S \;\,\mathrm{hits}\;\,(\gamma_0,\gamma_1,\gamma_2)\}$$
and let $C := |M_\pi(\gamma)|\in \N.$
Note that $C$ is independent of the choice of $\gamma$. Let $m_\gamma$ be the median of $\gamma_0,\gamma_1$, and $\gamma_2$ in $\gamma=(\gamma_0,...,\gamma_{n-1})\in D_n(\bd T)$, then
\begin{align*}
    \norm{\alpha(\gamma)}_2^2 = \sum_{S\in \cM_\pi(\gamma)}\norm{\alpha(\gamma)(S)}_V^2\leq\sum_{S\in \cM_\pi(\gamma)}\norm{\alpha(\gamma)}_\infty^2=C\cdot\norm{\alpha(\gamma)}_\infty^2\leq C\cdot\norm{\alpha}_\infty^2
\end{align*}
for any $\gamma\in D_n(\bd T)$. It follows that $\alpha(D_n(\bd T))$ is bounded in $\ell^2(\cM_\pi, V)$. 

We have shown that the map $\alpha$ defines a bounded cochain in $L_{\mathrm{alt}}^\infty((\bd T)^n, \ell^2(\cM_\pi,V))^G$. The proof is now complete since $c = d\alpha$.
\end{proof}

\begin{cor}\label{cor: cuspidal_vanish}
Let $(\pi, \cH_\pi)$ be a unitary cuspidal representation of $G$. Then $\Hcb^n(G, \cH_\pi)=0$ for all $n\geq 3$. 
\begin{proof}
This follows directly from Theorem \ref{prop: vanishing_cusp} and the classification of unitary cuspidal representations by Ol'shanski (Theorem \ref{thm: class_cuspidal}).
\end{proof}
\end{cor}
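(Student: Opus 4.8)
The final statement to prove is Corollary \ref{cor: cuspidal_vanish}: if $(\pi,\cH_\pi)$ is a unitary cuspidal representation of $G=\aut(T)$, then $\Hcb^n(G,\cH_\pi)=0$ for all $n\geq 3$.

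\medbreak

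The plan is to reduce this corollary to Theorem \ref{prop: vanishing_cusp}, which has already been established. First I would invoke the classification of unitary cuspidal representations due to Ol'shanski, recorded as Theorem \ref{thm: class_cuspidal} in the excerpt: every unitary cuspidal representation $(\pi,\cH_\pi)$ of $G$ has a minimal tree $S_0\in\cM_\pi$, which is a finite complete subtree of diameter at least $2$, and $\pi$ is unitarily equivalent to $\ind_{S_0}\,\omega$ for some non-degenerate irreducible unitary representation $(\omega, V)$ of the finite group $Q(S_0)\simeq\aut(S_0)$. In particular, $(\omega,V)$ is an irreducible unitary representation of $\aut(S_0)$, so the hypotheses of Theorem \ref{prop: vanishing_cusp} are met.

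\medbreak

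Next I would apply Theorem \ref{prop: vanishing_cusp} directly to this data: since $S_0$ is a finite complete subtree of $T$ and $(\omega,V)$ is an irreducible unitary representation of $\aut(S_0)$, the theorem gives $\Hcb^n(\aut(T),\ind_{S_0}\,\omega)=0$ for all $n\geq 3$. Because $\cH_\pi\cong \ind_{S_0}\,\omega$ as Banach (indeed unitary) $G$-modules, and continuous bounded cohomology is a functor that sends isomorphic coefficient modules to isomorphic cohomology groups, we conclude $\Hcb^n(G,\cH_\pi)=0$ for all $n\geq 3$. One small point worth noting is that Theorem \ref{prop: vanishing_cusp} does not require the non-degeneracy hypothesis on $\omega$; it holds for \emph{any} irreducible unitary representation of $\aut(S_0)$. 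The non-degeneracy is only needed to ensure $\ind_{S_0}\,\omega$ is itself irreducible and genuinely cuspidal, which is exactly what the classification theorem supplies when we start from a cuspidal $\pi$.

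\medbreak

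There is essentially no obstacle here: the corollary is a formal consequence of the more general Theorem \ref{prop: vanishing_cusp} together with the structural input of Theorem \ref{thm: class_cuspidal}. The only thing to be careful about is bookkeeping — making sure the finite complete subtree produced by the classification is the same object $S_0$ that Theorem \ref{prop: vanishing_cusp} takes as input, and that "diameter at least $2$" (required for cuspidality) is subsumed by "finite complete subtree" (all that Theorem \ref{prop: vanishing_cusp} requires). All the genuine work — the Flipping Lemma argument, the finite-support Lemma \ref{lem: finite_supp}, relative injectivity of the $\ell^\infty$-induction, and the $\ell^2$-boundedness estimate on the primitive $\alpha$ — has already been carried out in the proof of Theorem \ref{prop: vanishing_cusp}, so the proof of the corollary is a one-line citation.
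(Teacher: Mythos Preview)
Your proposal is correct and follows exactly the same approach as the paper: invoke Ol'shanski's classification (Theorem \ref{thm: class_cuspidal}) to write the cuspidal representation as $\ind_{S_0}\,\omega$ for a finite complete subtree $S_0$ and an irreducible unitary representation $\omega$ of $\aut(S_0)$, and then apply Theorem \ref{prop: vanishing_cusp}. The paper's proof is literally a one-line citation of these two results, and your expanded discussion simply unpacks that citation faithfully.
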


\subsubsection{Hommage aux Centip\`edes}\label{subsec: centipede}

Let $T=T_{q}$ and let $G=\aut(T)$. It remains to compute $\Hcb^2(G, \cH_\pi)$ for unitary cuspidal representations $(\pi, \cH_\pi)$.

\begin{defn}\label{def: heads}
\emph{Let $S$ be a finite complete subtree with $\diam(S)\geq2$. A \textbf{head} of $S$ is the minimal subtree in $S$ that contains the vertices in $S\setminus S'$, where $S'$ is a maximal complete proper subtree of $S$. We say that $S$ is $k$-\textbf{headed} if it has $k$ heads, or equivalently, if it has $k$ maximal complete proper subtrees.}
\end{defn}
Note that any finite complete subtree of diameter at least 2 contains at least 2 heads. 
\begin{defn}\label{def: centipede}
\emph{A $2$-headed finite complete subtree $S$ of diameter $k\geq 3$ is called a $k$-\textbf{centipede}. Any finite complete subtree of diameter 2 is called a \textbf{spider}. We say that a centipede or a spider $S$ of diameter $k$ \textbf{lies on a geodesic} $L$ in $T$ if $\diam(S\cap L)=k$. }
\end{defn}

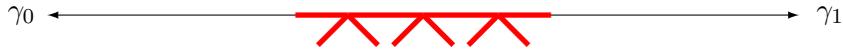
\begin{figure}[!htb]
    \centering
    \setlength{\unitlength}{0.5cm}
\begin{picture}(10,3)
\put(5,2){\vector(1,0){10}}
\put(5,2){\vector(-1,0){10}}
\put(-6.3,2){\makebox(0,0)[l]{\ $\gamma_0$}}
\put(15.2,2){\makebox(0,0)[l]{\ $\gamma_1$}}
\linethickness{0.7mm}
\put(5,2){\color{red}\line(1,0){2}}
\put(5,2){\color{red}\line(1,-1){0.8}}
\put(5,2){\color{red}\line(-1,-1){0.8}}
\put(5,2){\color{red}\line(-1,0){2}}
\linethickness{0.7mm}
\put(7,2){\color{red}\line(1,0){1.4}}
\put(7,2){\color{red}\line(1,-1){0.8}}
\put(7,2){\color{red}\line(-1,-1){0.8}}
\put(3,2){\color{red}\line(-1,0){1.4}}
\put(3,2){\color{red}\line(1,-1){0.8}}
\put(3,2){\color{red}\line(-1,-1){0.8}}
\end{picture}
    \caption{A $4$-centipede (thick red) that lies on $L(\gamma_0,\gamma_1)$ in the $4$-regular tree.}
    \label{fig:centipede}
\end{figure}

We now show that $\Hcb^2(G,\cH_\pi)\ne 0$ only if $\cM_\pi$ consists of centipedes or spiders. In fact, we will prove the following stronger statement:

\begin{thm}\label{prop: too_many_heads}
Fix $k\geq 3$. Let $(\pi, \cH_\pi)$ be a unitary cuspidal representation of $G$ such that $\cM_\pi$ consists of finite complete subtrees with at least $k$ heads. Then the subspace of cocycles
$$\mathcal{Z}L_{\mathrm{alt},\mathrm{w}^*}^\infty((\bd T)^k, \cH_\pi)^G:=\{c\in L_{\mathrm{alt},\mathrm{w}^*}^\infty((\bd T)^k, \cH_\pi)^G: dc=0\}$$
is trivial.
\end{thm}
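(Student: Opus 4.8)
The plan is to show that a $G$-equivariant, alternating cocycle $c$ in degree $k$ must already vanish pointwise on $D_k(\bd T)$, which by Corollary \ref{cor: reso_bd} forces the cohomology-level statement and even more. First I would invoke Lemma \ref{lem: a.e.} and Remark \ref{rmk: a.e. orbits} to replace the class $c$ by an honest bounded, weak*-Borel-measurable representative $\alpha\colon (\bd T)^k\to\cH_\pi$ that is alternating and $G$-equivariant on the conull set $D_k(\bd T)$; passing through $\ell^\infty(\cM_\pi,V)$ exactly as in the proof of Theorem \ref{prop: vanishing_cusp}, Lemma \ref{lem: finite_supp} shows that for each $\gamma=(\gamma_0,\dots,\gamma_{k-1})\in D_k(\bd T)$ the support of $\alpha(\gamma)$ is contained in the finite set of $S\in\cM_\pi$ that \emph{hit} every triple $(\gamma_i,\gamma_j,\gamma_l)$. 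So the whole question reduces to: for a fixed tuple $\gamma$ of $k$ distinct boundary points, and a fixed $S\in\cM_\pi$ (a subtree with at least $k$ heads) hitting all triples of $\gamma$, why is $\alpha(\gamma)(S)=0$?

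The key geometric step is a counting/pigeonhole argument. Since $S$ has at least $k$ heads but $\gamma$ has only $k$ boundary points, and $S$ must ``pass through'' the median of every triple of $\gamma$, I would argue that some two of the $\gamma_i$'s leave $S$ through the \emph{same} maximal proper complete subtree $S'$ of $S$ — equivalently, the geodesic rays $[x,\gamma_i)$ and $[x,\gamma_j)$ from the relevant vertex $x\in S$ agree on the head not containing $S'$. More carefully: each $\gamma_i$ determines a ``direction of exit'' from $S$, namely which head(s) of $S$ the ray to $\gamma_i$ eventually avoids; if $S$ hits all triples then every vertex of $S$ of degree $q+1$ is a median of some triple, and a dimension count on the exit-directions versus the number of heads produces a pair $i\ne j$ and a head $h$ of $S$ such that the rays to $\gamma_i$ and to $\gamma_j$ both avoid $h$ and agree along $h$ — in particular they agree on at least one full maximal complete proper subtree of $S$. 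Then, exactly as in the Flipping Lemma (Lemma \ref{lem: flip}) and Lemma \ref{lem: finite_supp}, there is an elliptic $h_{ij}\in G(S)\cap\bigcap_{l\ne i,j}\stab_G(\gamma_l)$ with $h_{ij}\gamma_i=\gamma_j$, $h_{ij}\gamma_j=\gamma_i$, and $h_{ij}S=S$ pointwise; equivariance plus the alternating property give
$$
\alpha(\gamma)(S)=-\alpha(h_{ij}\gamma)(h_{ij}S)=-\wt{\omega}\!\big(s(S)^{-1}h_{ij}s(S)\big)\cdot\alpha(\gamma)(S)=-\alpha(\gamma)(S),
$$
since $s(S)^{-1}h_{ij}s(S)\in G(S_0)$ acts trivially. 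Hence $\alpha(\gamma)(S)=0$, and as this holds for every $S$ in the (finite) support, $\alpha(\gamma)=0$ for all $\gamma\in D_k(\bd T)$; so the class of $c$ is zero.

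The main obstacle I expect is making the pigeonhole step fully rigorous — precisely, ensuring that ``$S$ hits all triples of $\gamma$'' really does force two of the $\gamma_i$ to share a head-avoiding direction when $S$ has $\ge k$ heads. The subtlety is that a single $\gamma_i$ can itself ``use up'' several heads (its exit ray can avoid many of them at once), so one must set up the right bipartite incidence between the $k$ points and the $\ge k$ heads and argue that the hitting condition forces this incidence to be, in an appropriate sense, a bijection on the relevant part, leaving a head with at least two points. I would handle this by observing that if $S$ hits $(\gamma_i,\gamma_j,\gamma_l)$ then the median of that triple is a degree-$(q+1)$ vertex of $S$, so the three rays split into distinct directions at that vertex; running this over all triples and using that $S$ is a union of its $\ge k$ heads glued along the ``spine'', a clean combinatorial lemma (each head of $S$ is entered by the ray to at most one $\gamma_i$, else we are done) yields at most $k$ points distributed injectively among $\ge k$ heads, which still leaves a head entered by \emph{no} $\gamma_i$ — and one checks that this likewise produces the required flipping element fixing $S$ while swapping a suitable pair. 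The remaining details (measurability of the resulting pointwise-zero map, and invoking Corollary \ref{cor: reso_bd}) are routine.
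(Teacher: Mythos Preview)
Your argument never uses the cocycle condition $dc=0$; you pick a representative $\alpha$ of $c$ in degree $k$ and from that point on only use that $\alpha$ is alternating and $G$-equivariant. In effect you are trying to prove the stronger statement that the whole cochain space $L^\infty_{\alt,\mathrm{w}^*}((\bd T)^k,\cH_\pi)^G$ vanishes, and your pigeonhole does not establish this. The sentence ``at most $k$ points distributed injectively among $\ge k$ heads, which still leaves a head entered by no $\gamma_i$'' is simply wrong when $S$ has \emph{exactly} $k$ heads and each $\gamma_i$ exits through a distinct head. In that bijection case there is no unused head, so Lemma \ref{lem: not_pass_head} does not apply; and there is no flip either, because any element of $G(S)$ fixes every vertex of $S$ and hence cannot exchange two boundary points that leave $S$ through different heads. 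So the case you flag as ``the main obstacle'' is a genuine gap, and it cannot be closed by the tools you list.

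The paper's proof avoids this by actually using that $c$ is a cocycle. Since $\ell^\infty(\cM_\pi,V)$ is relatively injective (Lemma \ref{lem: rel_inj}), one writes $c=d\alpha$ with $\alpha$ a $G$-equivariant alternating map on $(\bd T)^{k-1}$ with values in $\ell^\infty(\cM_\pi,V)$. Now the pigeonhole is $k-1$ boundary points against at least $k$ heads, which is strict: for every $S\in\cM_\pi$ some head is not entered by any ray $[x,\gamma_j)$, and Lemma \ref{lem: not_pass_head} gives $\alpha(\gamma)(S)=0$ for all $\gamma\in D_{k-1}(\bd T)$ and all $S$. Hence $\alpha=0$ and $c=d\alpha=0$. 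The moral is that the drop from $k$ to $k-1$ coming from $dc=0$ is exactly what makes the pigeonhole work; trying to argue directly in degree $k$ loses this margin.
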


As in the proof of Proposition \ref{prop: vanishing_cusp}, we fix an $S_0\in \cM_\pi$. Then $\pi = \ind_{S_0}\omega$ for some non-degenerate irreducible unitary representation $(\omega,V)$ of $\aut(S_0)$ (Theorem \ref{thm: class_cuspidal}). Identify $\cM_\pi$ with $G/\wt{G}(S_0)$ and fix a set-theoretic section $s: \cM_\pi\to G$ that sends $S_0$ to the identity of $G$. Then $\cH_\pi$ is identified by the $G$-module $\ell^2(\cM_\pi, V)$.

We first state the following observation:

\begin{lem}\label{lem: not_pass_head}
Let $k\geq 2$, and let $\alpha: X_0\subset(\bd T)^k\to \ell^\infty(\cM_\pi, V)$ be a $G$-equivariant map defined on a $G$-invariant subset $X_0$. Let $S\in \cM_\pi$ and let $x\in S$ be a vertex of degree $q+1$. Let $\gamma=(\gamma_0,...,\gamma_{k-1})\in X_0$. If there exists some maximal complete proper subtree $S'$ of $S$ such that 
$(S\setminus S')\cap [x,\gamma_j)=\varnothing$
for all $j = 0,...,k-1$, then $\alpha(\gamma)(S)=0$.
\begin{proof}
Since $X_0$ is $G$-invariant and the action of $G$ on $\cM_\pi$ is transitive (\cite{F-TN91}, Lemma 2.4 in Chapter III), the $G$-equivariance of $\alpha$ allows us to assume that $S = S_0$. For any $g\in G(S')$, the condition $(S_0\setminus S')\cap [x,\gamma_j)=\varnothing$ for all $j$ implies that $g\gamma_j=\gamma_j$ for all $j$. Since $\alpha$ is $G$-equivariant, we have that 
$$
\alpha(\gamma)(S_0) = \alpha(g\gamma)(gS_0) = \wt{\omega}(g)\cdot\alpha(\gamma)(S_0) = \omega(p_{S_0}(g))\cdot\alpha(\gamma)(S_0).
$$
for all $g\in G(S')$. So $\alpha(\gamma)(S_0)=0$ by the non-degeneracy of $\omega$.
\end{proof}
\end{lem}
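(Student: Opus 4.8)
The plan is to fix a reference minimal tree and then produce, for every symmetry $a$ of the head being removed, an automorphism fixing the tuple $\gamma$ that induces $a$ on $S_0$; the value $\alpha(\gamma)(S_0)$ is then invariant under all of these symmetries and must vanish by non-degeneracy.

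First I would reduce to the case $S=S_0$. Choosing $g_1\in G$ with $g_1S=S_0$ and replacing $\gamma,x,S'$ by $g_1\gamma,g_1x,g_1S'$, the hypothesis is preserved, since $g_1[x,\gamma_j)=[g_1x,g_1\gamma_j)$ avoids $g_1(S\setminus S')=S_0\setminus g_1S'$ and $g_1S'$ is again a maximal proper complete subtree. As the coefficient $\wt{\omega}(\cdots)$ appearing in $\alpha(g_1\gamma)(S_0)=\wt{\omega}(\cdots)\,\alpha(\gamma)(S)$ is invertible, vanishing of $\alpha(\gamma)(S)$ is equivalent to vanishing of $\alpha(g_1\gamma)(S_0)$. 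Writing $S'=S_j$ for the corresponding index in the list $S_1,\dots,S_n$ of maximal proper complete subtrees of $S_0$, and $A_j=p_{S_0}(G(S_j))$, the goal becomes to show that $\alpha(\gamma)(S_0)$ is fixed by $\omega(a)$ for every $a\in A_j$, after which non-degeneracy of $(\omega,V)$ forces $\alpha(\gamma)(S_0)=0$.

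The geometric core is the following. Since $x$ is the initial vertex of each ray $[x,\gamma_j)$ and these rays avoid $S_0\setminus S'$, we have $x\in S'$ and, more usefully, the portion of each $[x,\gamma_j)$ lying in $S_0$ is contained in $S'$; hence each ray leaves $S_0$ through a leaf of $S_0$ that belongs to $S'$. For a fixed $a\in A_j$ I would then build $g\in\wt{G}(S_0)$ with $p_{S_0}(g)=a$ and $g\gamma_j=\gamma_j$ for all $j$ by prescribing $g$ branch by branch: let $g$ act as $a$ on $S_0$ (so it fixes $S'$ pointwise and only permutes the head $S_0\setminus S'$), let $g$ be the identity on every branch of $T$ rooted at a leaf of $S_0$ lying in $S'$, and let $g$ map the branches rooted at the head leaves to one another by any tree isomorphisms compatible with $a$. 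These local prescriptions glue to an automorphism of $T$ because $T$ is the union of $S_0$ with the disjoint branches at its leaves. Because $a$ fixes $S'$ pointwise and each ray lives inside $S'$ together with the branches off the $S'$-leaves of $S_0$, the element $g$ fixes every ray, whence $g\gamma=\gamma$.

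With such $g$ in hand, $g\gamma=\gamma$ and $gS_0=S_0$ together with $G$-equivariance and the normalization $s(S_0)=e_G$ give $\alpha(\gamma)(S_0)=\alpha(g\gamma)(gS_0)=\wt{\omega}(g)\,\alpha(\gamma)(S_0)=\omega(a)\,\alpha(\gamma)(S_0)$. Letting $a$ range over $A_j$ exhibits $\alpha(\gamma)(S_0)$ as an $\omega(A_j)$-invariant vector, and non-degeneracy concludes. I expect the main obstacle to be exactly the construction of $g$ in the previous paragraph: the naive lift of $a$ to $G(S')$ need not fix $\gamma$, because it may scramble the branches hanging off the non-head leaves of $S'$ and thereby move some $\gamma_j$. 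The hypothesis that the rays miss the head is precisely what forces them to exit only through leaves in $S'$, which is what makes it possible to reset the lift to the identity on those branches without disturbing its induced automorphism $a$ on $S_0$.
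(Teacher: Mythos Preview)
Your proof is correct and follows the same route as the paper: reduce to $S=S_0$, show that $\alpha(\gamma)(S_0)$ is $A_j$-invariant via $G$-equivariance, and conclude by non-degeneracy. In fact your care in constructing the lift $g$ is warranted --- the paper's assertion ``for any $g\in G(S')$ \ldots\ $g\gamma_j=\gamma_j$'' is not literally true (such $g$ may permute the branches hanging off the $S'$-leaves of $S_0$ and move $\gamma_j$), and what is actually needed, as you supply, is for each $a\in A_j$ a single lift $g\in G(S')$ with $p_{S_0}(g)=a$ that fixes every $\gamma_j$.
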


\begin{proof}[Proof of Theorem \ref{prop: too_many_heads}]
Let $c\in \mathcal{Z}L_{\mathrm{alt},\mathrm{w}^*}^\infty((\bd T)^k, \cH_\pi)^G$. As in the proof of Proposition \ref{prop: vanishing_cusp}, we may view $c$ as a coboundary in $L_{\mathrm{alt},\mathrm{w}^*}^\infty((\bd T)^k, \ell^\infty(\cM_\pi, V))^G$. By Lemma \ref{lem: a.e.} and Remark \ref{rmk: a.e. orbits}, there exists a weak*-Borel-measurable bounded map $\alpha: (\bd T)^{k-1}\to \ell^\infty(\cM_\pi, V)$ 
such that $\alpha|_{D_k(\bd T)}$ is $G$-equivariant and $d\alpha$ represents $c$.

Let $\gamma=(\gamma_0,...,\gamma_{k-2})\in D_{n-1}(\bd T)$ and $S\in\cM_\pi$. Then for any $j=0,...,k-2$, there is at most one maximal complete proper subtree $S''$ of $S$ such that $[x,\gamma_j)\cap(S\setminus S'') \ne \varnothing$. Since $S$ has at least $k$ heads, the pigeonhole principle then guarantees the existence of a maximal complete proper subtree $S'$ of $S$ such that $[x,\gamma_j)\cap (S\setminus S') = \varnothing$ for all $j=0,...,k-2$. From Lemma \ref{lem: not_pass_head} it follows that $\alpha(\gamma)(S)=0$, and so $c=d\alpha=0$.
\end{proof}

\begin{cor}\label{cor: non_centipede_degree_2}
    Let $(\pi, \cH_\pi)$ be a unitary cuspidal representation such that $\cM_\pi$ consists of finite complete subtrees with three or more heads. Then $\Hcb^2(G, \cH_\pi)=0$.
\begin{proof}
Apply Theorem \ref{prop: too_many_heads} with $k=3$.
\end{proof}
\end{cor}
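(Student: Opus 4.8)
The plan is to read the statement off directly from Theorem \ref{prop: too_many_heads}; essentially all that remains is to match up the degree bookkeeping. First I would observe that $\cH_\pi \cong \ell^2(\cM_\pi, V)$ is a separable Hilbert space on which $G$ acts unitarily, hence a coefficient $G$-module, so that Corollary \ref{cor: reso_bd} applies and computes $\Hcb^{*}(G, \cH_\pi)$ from the complex of $G$-equivariant, alternating, weak*-measurable $L^{\infty}$ cochains on the powers of $\bd T$. In that resolution the $n$-th cohomology is computed at cochains on $(\bd T)^{n+1}$, so for $n = 2$ the relevant space of cocycles is exactly $\mathcal{Z}L_{\mathrm{alt},\mathrm{w}^*}^\infty((\bd T)^{3}, \cH_\pi)^{G}$, and $\Hcb^{2}(G,\cH_\pi)$ is a quotient of it.

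Then I would apply Theorem \ref{prop: too_many_heads} with $k = 3$. The hypothesis that $\cM_\pi$ consists of finite complete subtrees with three or more heads is precisely the hypothesis of that theorem in the case $k = 3$, so it yields $\mathcal{Z}L_{\mathrm{alt},\mathrm{w}^*}^\infty((\bd T)^{3}, \cH_\pi)^{G} = 0$, and hence $\Hcb^{2}(G, \cH_\pi) = 0$.

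I do not anticipate any genuine difficulty here: the work has been done in Theorem \ref{prop: too_many_heads} (and, underneath it, in the Flipping Lemma \ref{lem: flip} and in Lemma \ref{lem: not_pass_head}, via the pigeonhole argument on the heads of $S_0$). The one point deserving a sentence of care is the degree shift, namely checking that degree $2$ in $\Hcb^{*}$ corresponds to cochains on $(\bd T)^{3}$ so that $k = 3$ is the correct value to substitute; one may optionally also record that having at least three heads forces $\diam(S_0) \geq 4$, which makes explicit that this case is disjoint from the centipede case treated in Theorem \ref{thm: B}.
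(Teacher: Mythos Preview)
Your proposal is correct and follows exactly the paper's approach: the paper's proof is the single line ``Apply Theorem \ref{prop: too_many_heads} with $k=3$,'' and you have simply unpacked the implicit degree bookkeeping (that $\Hcb^{2}$ is a quotient of $\mathcal{Z}L_{\mathrm{alt},\mathrm{w}^*}^\infty((\bd T)^{3}, \cH_\pi)^{G}$ via Corollary \ref{cor: reso_bd}). Your optional remark that three or more heads forces $\diam(S_0)\geq 4$ is also correct, since a diameter-$3$ finite complete subtree is always a $3$-centipede.
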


Corollary \ref{cor: non_centipede_degree_2}, Theorem \ref{prop: vanishing_cusp}, and the vanishing results for unitary spherical (Theorem \ref{thm: spherical}) and special representations (Theorem \ref{thm: special}) finish the proof of Theorem \ref{thm: A}. To conclude Theorem \ref{thm: B}, it remains to compute $\Hcb^2(G, \cH_\pi)$ when $\cM_\pi$ consists of centipedes.

Suppose that $S_0$ is a centipede or a spider of diameter $k$. 
Fix any two vertices $x,y\in S_0$ with degree $1$ such that $x,y$ belong to two different maximal complete proper subtrees of $S_0$. Define two subgroups $Q(x,y)$ and $\wt{Q}(x,y)$ of $\aut(S_0)$ as follows:
\begin{align*}
    Q(x,y)&:=\{\varphi\in \aut(S_0): \varphi(x)=x,\varphi(y)=y\},\\
    \wt{Q}(x,y)&:=\{\varphi\in \aut(S_0): \varphi(\{x,y\})=\{x,y\}\}.
\end{align*}
In other words, these are the point-wise and set-wise stabilizers of the set $\{x,y\}$ respectively. 


\begin{lem}\label{lem: centipede_alpha}
Let $k\geq 2$ and let $(\pi,\cH_\pi)$ be a unitary cuspidal representation of $G$ such that $\cM_\pi$ consists of centipedes or spiders of diameter $k$. Then there is a vector space isomorphism
$$
L_{\alt,\mathrm{w}^*}^\infty((\bd T)^2,\ell^\infty(\cM_\pi, V))^G\overset{\simeq}\longrightarrow V^{Q(x,y)}/\,V^{\wt{Q}(x,y)}$$
\begin{proof}

By Corollary \ref{cor: a.e. 2}, it is enough to find an isomorphism between the vector space $\cF_\alt^\infty((\bd T)^2, \ell^\infty(\cM_\pi, V))^G$ of bounded $G$-equivariant alternating maps and $V^{Q(x,y)}/\,V^{\wt{Q}(x,y)}$. 

Pick two distinct elements $\gamma_0,\gamma_1\in \bd T$ such that $S_0$ lies on the geodesic line $L:=L(\gamma_0,\gamma_1)$ with $\{x,y\}\subset L$. For a vertex $z$ of degree $q+1$ in $S_0$, assume without loss of generality that $x\in [z,\gamma_0)$ and $y\in [z,\gamma_1)$. Consider the linear map 
$$\ev:\,\cF_\alt^\infty((\bd T)^2, \ell^\infty(\cM_\pi, V))^G \longrightarrow V^{Q(x,y)}$$ 
defined by $\alpha \mapsto \alpha(\gamma_0,\gamma_1)(S_0)$. Observe that the image of $\ev$ is indeed contained in $V^{Q(x,y)}$: If $q\in Q(x,y)$, then by our choice of $\gamma_0$ and $\gamma_1$, there exists some $g\in \wt{G}(S_0)$ such that $p_{S_0}(g)=q$ with $g(\gamma_0,\gamma_1) = (\gamma_0, \gamma_1)$, and it follows that
$$\ev(\alpha)=\alpha(g\gamma_0,g\gamma_1)(gS_0)=\wt{\omega}(g)\alpha(\gamma_0,\gamma_1)(S_0)=\omega(q)\ev(\alpha)$$
for any $\alpha\in \cF_\alt^\infty((\bd T)^2, \ell^\infty(\cM_\pi, V))^G$. Now consider the linear map 
$$\wt{\ev}: \cF_\alt^\infty((\bd T)^2, \ell^\infty(\cM_\pi, V))^G\longrightarrow V^{Q(x,y)}/V^{\wt{Q}(x,y)}$$ 
defined by composing $\ev$ and the canonical projection $V^{Q(x,y)}\to V^{Q(x,y)}/V^{\wt{Q}(x,y)}$.

We first show that $\wt{\ev}$ is injective. Suppose that $\alpha(\gamma_0,\gamma_1)(S_0)\in V^{\wt{Q}(x,y)}$. Choose an element $\varphi\in V^{\wt{Q}(x,y)}$ such that $\varphi(x)=y$, $\varphi(y)=x$. Then by our choice of $\gamma_0$ and $\gamma_1$, there exists some $g\in \wt{G}(S_0)$ such that $p_{S_0}(g)=\varphi$ with $g\gamma_0=\gamma_1$ and $g\gamma_1=\gamma_0$. Then
$$\alpha(\gamma_0,\gamma_1)(S_0)=-\alpha(g\gamma_0,g\gamma_1)(gS_0) = -\omega(\varphi)\alpha(\gamma_0,\gamma_1)(S_0)=-\alpha(\gamma_0,\gamma_1)(S_0),$$
and so $\alpha(\gamma_0,\gamma_1)(S_0)=0$. By the $G$-equivariance of $\alpha$, it follows that $\alpha(\gamma_0,\gamma_1)(S)=0$ for all $S$ lying on $L=L(\gamma_0,\gamma_1)$. Moreover, if $S$ does not lie on $L$, then by Lemma \ref{lem: not_pass_head} we have $\alpha(\gamma_0,\gamma_1)(S)=0$, and so $\alpha(\gamma_0, \gamma_1)=0$. Finally, it follows from the transitivity of the action $G$ on $D_2(\bd T)$ that $\alpha=0$ on $D_2(\bd T)$, and hence on all of $(\bd T)^2$ since $\alpha$ is alternating.

We now show that $\wt{\ev}$ is surjective. Note that $Q(x,y)$ is a subgroup of index 2 in $\wt{Q}(x,y)$, so $Q(x,y)$ is normal in $\wt{Q}(x,y)$ and $V^{Q(x,y)}$ is invariant under the action of $\wt{Q}(x,y)$. Let $w\in V^{Q(x,y)}\setminus V^{\wt{Q}(x,y)}$. Then we may choose an element $\varphi\in \wt{Q}(x,y)$ of order 2 such that $\omega(\varphi)\ne \varphi$. We may then write $w=v_++v_-$, where $v_{\pm}$ is in the $(\pm1)$-eigenspaces of $\omega(\varphi)|_{V^{Q(x,y)}}$. Observe that $v_+\in V^{\wt{Q}(x,y)}$, and so both $w$ and $v_-$ define the same class in $V^{Q(x,y)}/V^{\wt{Q}(x,y)}$. Set $v:=v_-$. We define a map $\alpha: \bd T\times\bd T\to \ell^{\infty}(\cM_\pi, V)$ by
\begin{equation}\label{eq: construction_alpha}
\alpha(\gamma,\eta)(S):=\begin{cases}
    \wt{\omega}(s(S)^{-1}g)\cdot v & \textrm{if } $$(\gamma,\eta,S)=(g\gamma_0,g\gamma_1,gS_0)\in E_0$$\\
    0 & \textrm{otherwise,}
\end{cases}
\end{equation}
where $E_0$ denotes the $G$-orbit of the triple $(\gamma_0,\gamma_1,S_0)$ in $\bd T\times \bd T\times \cM_\pi$. The map $\alpha$ is well-defined. Indeed, if $(\gamma,\eta,
S)=g_1(\gamma_0,\gamma_1,S_0) = g_2(\gamma_0,\gamma_1,S_0)$, then
$$\wt{\omega}(s(S)^{-1}g_1)\cdot v=\wt{\omega}(s(S)^{-1}g_2)(\wt{\omega}(g_2^{-1}g_1)\cdot v).$$
Since $g_2^{-1}g_1$ fixes both $\gamma_0$ and $\gamma_1$, the image $p_{S_0}(g_2^{-1}g_1)$ in $\aut(S_0)$ is in $Q(x,y)$, and so $\wt{\omega}(g_2^{-1}g_1)\cdot v=v$. We now check that $\alpha$ is $G$-equivariant. If $(\gamma,\eta,S)=(g\gamma_0,g\gamma_1,gS_0)$, then for any $g'\in G$, one has
\begin{align*}
    \alpha(g'\gamma,g'\eta)(g'S)&= \wt{\omega}(s(g'S)^{-1}g'g)\cdot v\\
    &= \wt{\omega}(s(g'S)^{-1}g's(S))\wt{\omega}(s(S)^{-1}g)\cdot v\\
    &= \wt{\omega}(s(g'S)^{-1}g's(S))\alpha(\gamma,\eta)(S)
\end{align*}
as required. Finally, we check that $\alpha$ is alternating. By our choice of $\gamma_0,\gamma_1$ and $\varphi$, there exists a $g\in \wt{G}(S_0)$ such that $p_{S_0}(g)=\varphi$. It follows that
$$\alpha(\gamma_1,\gamma_0)(S_0) = \alpha(g\gamma_0,g\gamma_1)(gS_0)=\omega(\varphi)v =-v= -\alpha(\gamma_0,\gamma_1)(S_0).$$
It follows from the $G$-equivariance of $\alpha$ that $\alpha(\gamma,\eta)(S)=-\alpha(\eta,\gamma)(S)$ for all $(\gamma,\eta,S)\in \bd T\times \bd T\times \cM_\pi$, and so $\alpha$ is alternating. Thus, the map $\alpha$ is an element in $\cF_\alt^\infty((\bd T)^2,\ell^\infty(\cM_\pi, V))^G$ such that $\wt{\ev}(\alpha)=w$ as required.
\end{proof}
\end{lem}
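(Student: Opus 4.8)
The plan is to construct an explicit ``evaluation at a single configuration'' map and show it is an isomorphism. First, by Corollary \ref{cor: a.e. 2} it suffices to produce an isomorphism out of the space $\cF_\alt^\infty((\bd T)^2,\ell^\infty(\cM_\pi, V))^G$ of bounded, $G$-equivariant, alternating functions. I would work in the model $\cH_\pi = \ell^2(\cM_\pi, V)$ attached to a section $s\colon \cM_\pi\to G$ with $s(S_0)=e$, so that the $G$-action is the one for $\ind_{S_0}\,\omega$. Choose distinct $\gamma_0,\gamma_1\in\bd T$ so that $S_0$ lies on $L=L(\gamma_0,\gamma_1)$ with its two degree-$1$ vertices on $L$, say $x\in[z,\gamma_0)$ and $y\in[z,\gamma_1)$ for a degree-$(q+1)$ vertex $z$ of $S_0$, and set $\ev(\alpha):=\alpha(\gamma_0,\gamma_1)(S_0)\in V$. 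The first claim is that $\ev(\alpha)\in V^{Q(x,y)}$: every $q\in Q(x,y)$ fixes $x$ and $y$, hence fixes the directions of $L$ at both ends, so it lifts to some $g\in\wt{G}(S_0)$ with $p_{S_0}(g)=q$ and $g\gamma_0=\gamma_0$, $g\gamma_1=\gamma_1$; feeding this into the $\ind$-formula and using $G$-equivariance gives $\ev(\alpha)=\omega(q)\,\ev(\alpha)$. Composing $\ev$ with the projection $V^{Q(x,y)}\to V^{Q(x,y)}/V^{\wt{Q}(x,y)}$ gives the candidate map $\wt\ev$, which is manifestly linear.

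Injectivity: suppose $\ev(\alpha)\in V^{\wt{Q}(x,y)}$. Pick the order-$2$ element $\varphi\in\wt{Q}(x,y)$ that swaps $x$ and $y$, and lift it to $g\in\wt{G}(S_0)$ with $g\gamma_0=\gamma_1$, $g\gamma_1=\gamma_0$; alternation together with equivariance then forces $\ev(\alpha)=-\omega(\varphi)\,\ev(\alpha)=-\ev(\alpha)$, whence $\ev(\alpha)=0$. I would then propagate this to all of $\cM_\pi$: for $S$ lying on $L$ one transports $S_0$ to $S$ by an element of $\stab_G(\gamma_0)\cap\stab_G(\gamma_1)$; for $S$ not on $L$, a $k$-centipede has only two heads while there are only the two rays $[\,\cdot\,,\gamma_0)$ and $[\,\cdot\,,\gamma_1)$, and if both heads were met by these rays the centipede would be forced onto $L$ — so Lemma \ref{lem: not_pass_head} applies and $\alpha(\gamma_0,\gamma_1)(S)=0$. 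Hence $\alpha(\gamma_0,\gamma_1)=0$, and transitivity of $G$ on $D_2(\bd T)$ plus alternation give $\alpha=0$.

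Surjectivity: given $w\in V^{Q(x,y)}$, split $w=v_++v_-$ along the $\pm1$-eigenspaces of $\omega(\varphi)$ on $V^{Q(x,y)}$ — legitimate because $Q(x,y)$ is normal of index $2$ in $\wt{Q}(x,y)$, so $V^{Q(x,y)}$ is $\varphi$-stable — noting that $v_+\in V^{\wt{Q}(x,y)}$, so $v:=v_-$ represents the same class as $w$ and satisfies $\omega(\varphi)v=-v$. Then I would set $\alpha$ to be supported on the $G$-orbit $E_0$ of $(\gamma_0,\gamma_1,S_0)$ inside $\bd T\times\bd T\times\cM_\pi$ via $\alpha(g\gamma_0,g\gamma_1)(gS_0):=\wt\omega(s(gS_0)^{-1}g)\cdot v$, and zero off $E_0$. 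The required verifications are routine: well-definedness (two lifts of the same triple differ by an element fixing $\gamma_0,\gamma_1$, whose image in $\aut(S_0)$ lies in $Q(x,y)$ and fixes $v$); $G$-equivariance (read off the cocycle-type defining formula); boundedness ($\norm{\alpha(\gamma,\eta)}\le\norm{v}_V$); and alternation (from $\omega(\varphi)v=-v$, propagated by equivariance). Then $\wt\ev(\alpha)=[w]$, finishing the argument.

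The hard part will not be any single calculation but the geometric bookkeeping that underlies all the lifting steps: identifying $Q(x,y)$ with the image in $\aut(S_0)$ of the stabiliser of the pair $(\gamma_0,\gamma_1)$ in $\wt{G}(S_0)$, realising the flip of $\gamma_0$ and $\gamma_1$ by a lift of a prescribed involution of $S_0$, and — in the injectivity step — checking that vanishing at the one orbit representative genuinely propagates to all of $(\bd T)^2$, which is exactly where the combinatorial input (two heads versus two rays, Lemma \ref{lem: not_pass_head}, and transitivity on $D_2(\bd T)$) does the work.
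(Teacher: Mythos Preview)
Your proposal is correct and follows essentially the same route as the paper's proof: the same reduction via Corollary~\ref{cor: a.e. 2}, the same evaluation map $\ev(\alpha)=\alpha(\gamma_0,\gamma_1)(S_0)$ and its quotient $\wt\ev$, the same injectivity argument (flip $\gamma_0,\gamma_1$ by a lift of an involution in $\wt Q(x,y)$, then propagate via Lemma~\ref{lem: not_pass_head} and transitivity on $D_2(\bd T)$), and the same surjectivity construction supported on the $G$-orbit of $(\gamma_0,\gamma_1,S_0)$ using the $(-1)$-eigenvector $v_-$. Your write-up is in places slightly more explicit than the paper's (e.g.\ the two-heads-versus-two-rays reasoning and the boundedness check), but the argument is identical in substance.
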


\begin{rmk}\label{rmk: centipede_alpha}
\emph{The assumption of non-degeneracy was only there to ensure that for any $\alpha\in \cF_\alt^\infty ((\bd T)^2, \ell^\infty(\cM_\pi, V))^G$ and $(\gamma,\eta)\in (\bd T)^2$, the map $\alpha(\gamma,\eta)$ vanishes for all $S\in \cM_\pi$ not lying on $L(\gamma, \eta)$. }

\emph{Thus, if $\pi=\Ind_{S_0}\, \omega$ for any representation $(\omega,V)$ of $\aut(S_0)$, and $\cA_\omega$ is the space consisting of all $\alpha\in \cF_\alt^\infty ((\bd T)^2, \ell^\infty(\cM_\pi, V))^G$ such that for any $(\gamma,\eta)\in (\bd T)^2$, the map $\alpha(\gamma,\eta)$ vanishes for all $S\in \cM_\pi$ not lying on $L(\gamma, \eta)$, then the map}
$$\wt{\ev}: \cA_\omega\longrightarrow V^{Q(x,y)}/V^{\wt{Q}(x,y)}$$
\emph{considered in the proof of Lemma \ref{lem: a.e.zimmer} is still an isomorphism.} 
\end{rmk}


\begin{thm}\label{thm: H2_centipede}
Let $k\geq 2$ and let $(\pi,\cH_\pi)$ be a unitary cuspidal representation of $G$ such that $\cM_\pi$ consists of centipedes or spiders of diameter $k$. Then $$\Hcb^2(G, \cH_\pi)\simeq V^{Q(x,y)}/\,V^{\wt{Q}(x,y)}$$ 
as vector spaces.
\begin{proof}
Since the action of $G$ on $\bd T\times \bd T$ is doubly ergodic with coefficient in any continuous Banach $G$-modules, (\cite{Mon01}, Proposition 11.2.2), one has $L_{\alt}^\infty((\bd T)^2, \ell^2(\cM_\pi, V))^G=0$ and hence $\Hcb^2(G, \cH_\pi)\simeq \cZ L_{\alt,\mathrm{w}^*}^\infty((\bd T)^3,\ell^2(\cM_\pi, V))^G$ by Corollary \ref{cor: reso_bd}. By Lemma \ref{lem: finite_supp}, the cocycle $d\alpha$ defines a class in $\cZ L_{\alt,\mathrm{w}^*}^\infty((\bd T)^3,\ell^2(\cM_\pi, V))^G$, and it suffices to show that the boundary operator 
$$d:  L_{\alt,\mathrm{w}^*}^\infty((\bd T)^2,\ell^\infty(\cM_\pi, V))^G\to \cZ L_{\alt,\mathrm{w}^*}^\infty((\bd T)^3,\ell^2(\cM_\pi, V))^G$$
is an isomorphism between vector spaces by Lemma \ref{lem: centipede_alpha}. The surjectivity of $d$ follows again from the relative injectivity of $\ell^\infty(\cM_\pi, V)$ (Lemma \ref{lem: rel_inj}) and the fact that $\ell^2(\cM_\pi,V)\subset \ell^{\infty}(\cM_\pi, V)$ is a submodule, so it remains to show that $d$ is injective. To this end, we choose $\gamma_0, \gamma_1\in \bd T$ and define $\ev$ and $\wt{\ev}$ in the same way as in the first paragraph of the proof of Lemma \ref{lem: centipede_alpha}.

We first assume that $k\geq 3$. Let $H_x$, $H_y$ be the two heads containing $x$ and $y$ respectively, and denote by $S_x$ and $S_y$ the corresponding maximal complete proper subtree of $S_0$. Observe that the image $p_{S_0}(G(S_y))$ of $G(S_y)$ in $\aut(S_0)$ can be identified by $\aut(H_y)$, which is isomorphic to the permutation group of the set of vertices $S_0\setminus S_y$. For any two $a,b\in S_0\setminus  S_y$, we denote by $(ab)\in \aut(H_y)$ the transposition which permutes the two vertices. Now, let $\alpha\in \cF_\alt^\infty((\bd T)^2, \ell^\infty(\cM_\pi, V))^G$ be nonzero. Then the vector $v:= \ev(\alpha)=\alpha(\gamma_0,\gamma_1)(S_0)$ is a nonzero element in $V^{Q(x,y)}$ (see the first two paragraphs in the proof of Lemma \ref{lem: centipede_alpha}). Thus, for any $a,b\in S_0\setminus (S_y\cup \{y\})$, one has $\omega((ab))\cdot v=v$. Then there exists a vertex $y'\in S_0\setminus S_y$ such that $\omega((yy'))\cdot v\ne v$ by the non-degeneracy of $\omega$. Choose $\gamma_2\in \bd T$ such that the geodesic $L(\gamma_0,\gamma_2)$ passes through $y'$, and choose $g\in G(S_y)$ such that $f$ fixes $\gamma_0$ and swaps $\gamma_1$ and $\gamma_2$. Then $p_{S_0}(g)=(yy')$ and
$$
\alpha(\gamma_0,\gamma_2)(S_0) =\alpha(g\gamma_0,g\gamma_1)(gS_0) = \omega((yy'))\alpha(\gamma_0,\gamma_1)(S_0).$$
Moreover, we have by Lemma \ref{lem: not_pass_head} that $\alpha(\gamma_1,\gamma_2)(S_0)=0$ since $L(\gamma_1,\gamma_2)$ does not pass through the head $H_x$. Consequently, we have
$$
d\alpha(\gamma_0,\gamma_1,\gamma_2)(S_0) = -\alpha(\gamma_0,\gamma_2)(S_0)+\alpha(\gamma_0,\gamma_1)(S_0) = -\omega((yy'))\cdot v+v\ne 0,
$$
which shows that $d\alpha\ne 0$ as desired.

It remains to show that $d$ is injective when $S_0$ is a spider. Suppose that $d\alpha=0$, then by the relative injectivity of $\ell^\infty(\cM_\pi, V)$, it is enough to show that the space $L_{\mathrm{w}^*}^\infty(\bd T, \ell^\infty(\cM_\pi, V))^G$ of $0$-cochains is trivial. To this end, fix some $\gamma\in \bd T$ and let $P$ be the stabilizer of $\gamma$. We may then identify $\bd T$ with the quotient $G/P$. Since $G$ acts transitively on $\bd T$, there is a unique $G$-equivariant map $f_\beta$ for any $\beta\in L_{\mathrm{w}^*}^\infty(\bd T, \ell^\infty(\cM_\pi, V))^G$ (Remark \ref{rmk: a.e. orbits}), and we write $\beta(\eta):=f_\beta(\eta)$ for all $\eta\in \bd T$. Observe that $\beta(\gamma)\in \ell^\infty(\cM_\pi, V)$ is $P$-invariant for any $\beta$. Moreover, if $f\in \ell^\infty(\cM_\pi,V)$ is $P$-invariant, then
$$
\wt{\omega}(g)f(S_0) = \wt{\omega}(g)f(g^{-1}S_0) = (g\cdot f)(S_0) = f(S_0)
$$
for all $g\in P\cap \wt{G}(S_0)$. Thus, we have a well-defined composition of linear maps
\begin{align*}
   L_{\mathrm{w}^*}^\infty(\bd T, \ell^\infty(\cM_\pi, V))^G\overset{\psi_1}{\longrightarrow}\ell^\infty(\cM_\pi, V)^P\overset{\psi_2}\longrightarrow V^{P\cap \wt{G}(S_0)},
\end{align*}
where $\psi_1$ maps $\beta\in L_{\mathrm{w}^*}^\infty(\bd T, \ell^\infty(\cM_\pi, V))^G$ to $\beta(\gamma)$ and $\psi_2$ maps $f\in \ell^\infty(\cM_\pi, V)^P$ to $f(S_0)$. The map $\psi_1$ is injective since the action of $G$ on $\bd T$ is transitive. Since every $S\in \cM_\pi$ is completely determined by its unique vertex of valency $q+1$, the map $\psi_2$ is also injective as $P$ acts transitively on the set of vertices in $T$. Finally, let $x_0$ be the unique vertex in $S_0$ of valency $q+1$, and let $e_\gamma$ be the edge in $x_0$ that is contained in the chain $[x_0,\gamma)$. Then $e_\gamma$ is one of the maximal complete proper subtrees in $S_0$. Observe that the image of $P\cap \wt{G}(S_0)$ in $\aut(S_0)$ coincides with $p_{S_0}(G(e_\gamma))$. Thus, the vector space $V^{P\cap \wt{G}(S_0)}$ is trivial by the non-degeneracy of $\omega$, which completes the proof.
\end{proof}
\end{thm}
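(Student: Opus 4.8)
The plan is to run the boundary resolution of Corollary~\ref{cor: reso_bd} and to compare the complexes with $\ell^2(\cM_\pi,V)$- and $\ell^\infty(\cM_\pi,V)$-coefficients. Since $\ell^2(\cM_\pi,V)$ is reflexive and the diagonal $G$-action on $\bd T\times\bd T$ is doubly ergodic with coefficients (\cite{Mon01}, Proposition~11.2.2), the degree-$1$ term $L_{\alt}^\infty((\bd T)^2,\ell^2(\cM_\pi,V))^G$ vanishes, so there are no $2$-coboundaries and $\Hcb^2(G,\cH_\pi)$ equals the cocycle space $\cZ L_{\alt,\mathrm{w}^*}^\infty((\bd T)^3,\ell^2(\cM_\pi,V))^G$. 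On the $\ell^\infty$ side, Lemma~\ref{lem: centipede_alpha} already identifies $L_{\alt,\mathrm{w}^*}^\infty((\bd T)^2,\ell^\infty(\cM_\pi,V))^G$ with $V^{Q(x,y)}/V^{\wt{Q}(x,y)}$. So everything reduces to showing that the coboundary operator
$$d:\ L_{\alt,\mathrm{w}^*}^\infty((\bd T)^2,\ell^\infty(\cM_\pi,V))^G\longrightarrow \cZ L_{\alt,\mathrm{w}^*}^\infty((\bd T)^3,\ell^2(\cM_\pi,V))^G$$
is a linear isomorphism; the theorem then follows.

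First, that $d$ really lands in this target (i.e.\ that $d\alpha$ is a bounded $\ell^2$-valued cocycle) is Lemma~\ref{lem: finite_supp} plus the counting estimate from the proof of Theorem~\ref{prop: vanishing_cusp}: $d\alpha(\gamma)$ is supported on the boundedly-many centipedes that hit $\gamma$. Next, for surjectivity I would use that $\ell^\infty(\cM_\pi,V)=L^\infty\ind_{\wt{G}(S_0)}^G\wt\omega$ is relatively injective (Lemma~\ref{lem: rel_inj}), so $\Hcb^2(G,\ell^\infty(\cM_\pi,V))=0$; since $\ell^2(\cM_\pi,V)\subset\ell^\infty(\cM_\pi,V)$ is a $G$-submodule, any $c$ in the target is in particular an $\ell^\infty$-valued $2$-cocycle, hence an $\ell^\infty$-coboundary $c=d\alpha$.

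The injectivity of $d$ is the crux and the main obstacle; I would split on the number of heads of $S_0$, reusing the evaluation maps $\ev,\wt{\ev}$ from the proof of Lemma~\ref{lem: centipede_alpha}. Suppose $d\alpha=0$ with $\alpha\neq0$; then $v:=\ev(\alpha)=\alpha(\gamma_0,\gamma_1)(S_0)$ is a nonzero vector in $V^{Q(x,y)}$, and in fact $v\notin V^{\wt{Q}(x,y)}$. If $k\geq3$, then $S_0$ has exactly two heads $H_x,H_y$, and $v$ is fixed by every transposition of two vertices of $H_y$ distinct from $y$; by non-degeneracy of $\omega$ there is then a vertex $y'$ of $H_y$ with $\omega((y\,y'))v\neq v$. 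Choosing $\gamma_2\in\bd T$ so that $L(\gamma_0,\gamma_2)$ runs through $y'$ and $g\in G(S_y)$ fixing $\gamma_0$ and exchanging $\gamma_1,\gamma_2$ (so $p_{S_0}(g)=(y\,y')$), one computes $\alpha(\gamma_0,\gamma_2)(S_0)=\omega((y\,y'))v$ while $\alpha(\gamma_1,\gamma_2)(S_0)=0$ by Lemma~\ref{lem: not_pass_head} (the geodesic $L(\gamma_1,\gamma_2)$ misses the head $H_x$), so that $d\alpha(\gamma_0,\gamma_1,\gamma_2)(S_0)=v-\omega((y\,y'))v\neq0$, a contradiction.

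It remains to settle $k=2$, i.e.\ $S_0$ of diameter $2$ (hence $(q+1)$-headed), where the pigeonhole argument breaks down; here I would argue homologically. Since $\Hcb^1(G,\ell^\infty(\cM_\pi,V))=0$, the $1$-cocycle $\alpha$ is a coboundary $d\beta$ of a $0$-cochain $\beta\in L_{\mathrm{w}^*}^\infty(\bd T,\ell^\infty(\cM_\pi,V))^G$, and it suffices to show that this space is trivial. Writing $\bd T\simeq G/P$ with $P=\stab_G(\gamma)$, such a $G$-equivariant $\beta$ is determined by the $P$-fixed vector $\beta(\gamma)$, and evaluating at $S_0$ gives injective linear maps $L_{\mathrm{w}^*}^\infty(\bd T,\ell^\infty(\cM_\pi,V))^G\hookrightarrow\ell^\infty(\cM_\pi,V)^P\hookrightarrow V^{P\cap\wt{G}(S_0)}$, the second using that $P$ acts transitively on the vertices of $T$, hence on $\cM_\pi$. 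Finally the image of $P\cap\wt{G}(S_0)$ in $\aut(S_0)$ is the stabilizer of the edge $e_\gamma$ of $S_0$ at its central vertex pointing toward $\gamma$, which is one of the groups $A_j=p_{S_0}(G(e_\gamma))$; hence $V^{P\cap\wt{G}(S_0)}=0$ by non-degeneracy of $\omega$, so $\beta=0$ and $\alpha=0$. I expect the $k=2$ case to be the delicate point, since one must trade the explicit flipping computation for the relative-injectivity input together with a careful identification of $p_{S_0}(P\cap\wt{G}(S_0))$.
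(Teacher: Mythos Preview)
Your proposal is correct and follows the paper's proof essentially step for step: the same reduction via double ergodicity and Lemma~\ref{lem: centipede_alpha} to showing $d$ is an isomorphism, the same surjectivity via relative injectivity of $\ell^\infty(\cM_\pi,V)$, and the same two-case injectivity argument (the explicit transposition computation for $k\geq 3$, and the vanishing of $L_{\mathrm{w}^*}^\infty(\bd T,\ell^\infty(\cM_\pi,V))^G$ via the chain of injections into $V^{P\cap\wt G(S_0)}$ for $k=2$). The only cosmetic difference is that you note $v\notin V^{\wt Q(x,y)}$, which is true but not actually needed---only $v\neq 0$ is used.
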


We provide an example to illustrate Theorem \ref{thm: H2_centipede}.
\begin{ex}
\emph{Let $S_0$ be a $k$-centipede in the $3$-regular tree $T_2$ ($k\geq 3$), then $S_0$ has two maximal proper complete subtrees $S_0'$ and $S_0''$. Let $x_1,x_2$ be the two vertices in $S_0\setminus S_0'$ and let $y_1,y_2$ be the two vertices in $S_0\setminus S_0''$. Note that $Q(x_1,y_1)$ is trivial and $\wt{Q}(x_1,y_1)$ contains only one non-trivial element $s$. Let $t\in \aut(S_0)$ be the element that flips $x_1$ and $x_2$ and fixes every other point in $S_0$. Then $s,t$ generates $\aut(S_0)$. In fact,
$$\aut(S_0)=\inprod{s,t\,| \, s^2=t^2=(st)^4=1}$$
is the dihedral group of order $8$. Let $(\omega, V)$ be the $1$-dimensional sign representation whose kernel is the subgroup generated by $st$. Then $\omega$ is non-degenerate and $V^{\wt{Q}(x_1,y_1)}$ is trivial since $\omega(s)=-1$. Let $\pi := \ind_{S_0}\omega$, then we have
$$\Hcb^2(G, \cH_\pi)\simeq V^{Q(x_1,y_1)}/\,V^{\wt{Q}(x_1,y_1)}\simeq V$$
by Theorem \ref{thm: H2_centipede}.
}
\end{ex}

The following observation, along with Theorem \ref{thm: H2_centipede} and \ref{prop: vanishing_cusp}, completes the proof of Theorem \ref{thm: B}:
\begin{prop}\label{prop: dim_1}
Let $S_0$ be a centipede or spider with diameter $k\geq 2$. Then
$$\dim\, V^{Q(x,y)}/V^{\wt{Q}(x,y)}\leq 1$$
for any irreducible representation $(\omega, V)$ of $\aut(S_0)$, where $x,y$ are chosen as before. In particular, one has
$$\dim \Hcb^2(G, \cH_\pi)\leq 1$$
for any unitary irreducible representation $\pi$ of $G$. 
\begin{proof}
We write $Q:=Q(x,y)$ and $\wt{Q}:=\wt{Q}(x,y)$ and fix an element $\wt{q}\in \wt{Q}\setminus Q$ of order 2. 

We start by assuming that $k\geq 3$. Denote by $H_x$ and $H_y$ the two heads in $S_0$ that contain $x$ and $y$ respectively, and denote by $S_x$, $S_y$ the two corresponding maximal complete proper subtrees of $S_0$. Consider the index-2 subgroup $\aut^+(S_0)$ in $\aut(S_0)$ defined by
$$\aut^+(S_0)=\{g\in \aut(S_0):\; gH_x=H_x\;\, \mathrm{and}\;\, gH_y=H_y\}.$$
We claim that $(\aut^+(S_0), Q)$ is a Gelfand pair. Indeed, suppose that $g\in \aut^+(S_0)$, then either $gx=g^{-1}x=x$, or both $gx$ and $g^{-1}x$ are vertices in $S\setminus (S_x\cup \{x\})$. Thus, one may choose $h_x\in \aut(H_x)\leq \aut^+(S_0)$ such that $h_xg^{-1}x=gx$ and $h_xx=x$. Similarly, we may choose $h_y\in \aut(H_y)$ such that $h_yg^{-1}y=gy$ and $h_yy=y$. Choose $q:= h_xh_y\in Q$, then $g^{-1}qg^{-1}$ stabilizes both $x$ and $y$ Thus, one has $g^{-1}\in QgQ$, which readily implies that $(\aut^+(S_0), Q)$ is a Gelfand pair (see \cite{Mac95}, VII.1). 

Now let $(\omega, V)$ be an irreducible representation of $\aut(S_0)$. Consider the restriction $\omega_0$ of $\omega$ to the subgroup $\aut^+(S_0)$. If $\omega_0$ is irreducible, then $\dim V^Q\leq 1$ since $(\aut^+(S_0), Q)$ is a Gelfand pair. Otherwise, we may write $V=W\oplus \wt{q}\,W$ as a direct sum of $\aut^+(S_0)$-modules for some irreducible representation $W$ of $\aut^+(S_0)$, and it follows that $\dim V^Q/V^{\wt{Q}}\leq 2$. If $V^Q\ne 0$, then either $W^Q\ne 0$ or $(\wt{q}\,W)^Q\ne 0$. Assume without loss of generality that $W^Q\ne 0$, then $w+\wt{q}w$ is a nontrivial element in $V^{\wt{Q}}$ for any $w\in V^Q\setminus \{0\}$, which implies that $\dim V^{\wt{Q}}\geq 1$. This finishes the proof for $k\geq 3$. 

Now suppose that $S_0$ is a spider. Then $\aut(S_0)\simeq  \Sym(q+1)$ and $\wt{Q}\simeq \Sym(q-1)\times \Z/2\Z$, and so $(\aut(S_0), \wt{Q})$ is a Gelfand pair (\cite{Sax81}). Let $(\omega, V)$ be an irreducible representation of $\aut(S_0)$. Then the tensor product $\omega\otimes \sgn$ between $\omega$ and the sign representation remains irreducible. Thus, the subspace $W$ of $\wt{Q}$-invariant vectors in $V$ with respect to $\omega\otimes \sgn$ has dimension at most 1. But $W$ coincides with the $(-1)$-eigenspace of $\omega(\wt{q})$ in $V^Q$, which is isomorphic to $V^Q/V^{\wt{Q}}$. 
\end{proof}
\end{prop}

\section{Explicit Constructions}\label{sec: C}

Let $S_0$ be a spider or a centipede. In this section, we will find or construct an explicit non-degenerate irreducible representation $\omega$ of $\aut(S_0)$ such that $\Hcb^2(\aut(T), \ind_{S_0}\,\omega)\ne 0$. We will prove Theorem \ref{thm: C} by showing that such $\omega$ is unique up to isomorphism. 

\subsection{The Spiders}\label{subsec: spider}

We start with the case where $S_0$ is a spider in $T_q$. Let $x_1,...,x_{q+1}$ be the vertices in $S_0$ of degree 1. For each $\sigma\in \Sym(q+1)$, we identify $\sigma$ with the element in $\Sigma$ that sends $v_j$ to $v_{\sigma(j)}$ for all $j=0,...,{q+1}$. Set
\begin{align*}
    Q&:=Q(x_{q},x_{q+1})\simeq \Sym(q-1),\\
    \wt{Q}&:=\wt{Q}(x_q,x_{q+1})\simeq \Sym(q-1)\times\Z/2\Z.
\end{align*}

Denote by $\{e_j\}$ the standard basis of $\C^{q+1}$.  Consider the representation $(\omega_\perm,V_\perm)$ of $\Sigma$, where $V_\perm=\C^{q+1}$ and $\Sigma$ acts on $V_\perm$ by permuting coordinates. Then $\omega_\perm$ is the direct sum of the trivial representation $\id$ and the \textbf{standard representation} $(\omega_\std,V_\std)$, where $V_\std$ corresponds to all vectors $v\in\C^{q+1}$ such that the sum of the coordinates of $v$ is zero. Note that both the trivial representation and the standard representation are not non-degenerate: the vector $-q\cdot e_{j+1}+\sum_{j=1}^q e_j\in V_\std$ is invariant under the action of $\stab_{\Sigma} (x_{q+1})$.

\begin{thm}\label{thm: wedge_spider}
Let $S_0$ be a spider in $T_q$. Then the second exterior power $\wedge^2\omega_{\std}$ of the standard representation is the unique non-degenerate irreducible representation $\omega$ of $\Sigma$ such that
$$\Hcb^2(\aut(T_q), \Ind_{S_0}\,\omega)\ne 0.$$
\end{thm}

To prove the theorem, we need to study a few specific irreducible representations of $\Sym(q+1)$. For background in representations of symmetric groups, see for instance, Section 5.12-5.17 in \cite{Eti+11}.

\begin{lem}\label{lem: std}
We have $\dim V_\std^Q=2$ and $\dim V_\std^{\wt{Q}}=1$.
\begin{proof}
It is evident that $\dim V_\std^{Q}=2$. In fact, the subspace $V_\std^Q$ is spanned by 
\begin{align*}
v_1&=(q-1)(e_q+e_{q+1})-\sum_{j=1}^{q=1}\,2\cdot e_j\;\;\;\textrm{and}\;\;\; v_2=-e_q+e_{q+1},
\end{align*}
of which $v_1\in V_{\std}^{\wt{Q}}$ and $v_2\notin V_\std^{\wt{Q}}$.
\end{proof}
\end{lem}

\begin{lem}\label{lem: (q-1,2)}
For $q\geq 3$, let $(\omega_{(q-1,2)},V_{(q-1,2)})$ be the irreducible representation of $\sym(q+1)$ that corresponds to the Young diagram below of the partition $(q-1,2)$:
\end{lem}
\ytableausetup{centertableaux}
\begin{center}
\begin{ytableau}
    \hfill & \hfill & \hfill & \none[\dots]
& \scriptstyle \hfill & \hfill \\
\hfill & \hfill  
\end{ytableau}
\end{center}
Then the restriction $\tau_q:=\Res_{\Sym(q+1)}^{Q} \,\omega_{(q-1,2)}$ contains the trivial representation $\id$ of $Q$. In fact, we have $\dim V_{(q-1,2)}^Q= 1$.

\begin{proof}
For $q=3$ or $4$, a direct comparison of the character of $\omega_{(q-1,2)}$ on the conjugacy classes of $\Sym(q-1)\leq \Sym(q+1)$ against the character table of representations of $\Sym(q-1)$ gives the result\footnote{The character tables could be found on \url{https://groupprops.subwiki.org/wiki/Category:Linear_representation_theory_of_particular_groups}}. Now we assume that $q\geq5$. Let $i=(i_1,...,i_{q-1})$ and let $C_i$ be the conjugacy class in $\Sym(q-1)$ with $i_j$ cycles of length $j$. Then by the Frobenius formula (see Section 5.15 in \cite{Eti+11}), the character $\chi_{\tau_q}(C_i)$ of $\tau_q$ is given by
\begin{align*}
    \chi_{\tau_q}(C_i) &= \left[(t_1-t_2)(t_1+t_2)^{i_1+2}\cdot\prod_{j=2}^{q-1}\;(t_1^j+t_2^j)^{i_j}\right]_{(q,2)}\\
    &= \left[t_1^2\cdot P(t_1,t_2)\right]_{(q,2)}+\left[2t_1t_2\cdot P(t_1,t_2)\right]_{(q,2)}+\left[t_2^2\cdot P(t_1,t_2)\right]_{(q,2)},
\end{align*}
where $[\,\cdot\,]_{j,k}$ denotes the coefficient of the term $t_1^jt_2^k$ in the polynomial inside the bracket, and $P(t_1,t_2)$ is defined by 
$$P(t_1,t_2):=(t_1-t_2)\cdot\prod_{j=1}^{q-1} (t_1^j+t_2^j)^{i_j}.$$
Since $\sum_{j=1}^{n-1}j\cdot i_j=q-1$, we immediately have $\left[t_2^2\cdot P(t_1,t_2)\right]_{(q,2)}=1$. Moreover, we compute that
\begin{align*}
    \chi_\std(C_i) &= \left[P(t_1,t_2)\right]_{(q-1,1)}=[t_1t_2\cdot P(t_1,t_2)]_{(q,2)},\\
    \chi_{(q-3,2)}(C_i)&= \left[P(t_1,t_2)\right]_{(q-2,2)}=\left[t_1^2\cdot P(t_1,t_2)\right]_{(q,2)},
\end{align*}
where $\chi_\std$ is the characters corresponding to the standard representation of $\Sym(q-1)$, and $\chi_{(q-3,2)}$ is the character corresponding to the irreducible representation of $\Sym(q-1)$ associated with partition $(q-3,2)$. It follows that 
$\chi_{\tau_q}=2\cdot\chi_\std+\chi_{(q-3,2)}+1$, and so $\tau_q = \omega_{\std}^{\oplus2}\oplus\omega_{(q-3,2)}\oplus \id$. In particular, we have $\dim V_{(q-1,2)}^Q=1$. 
\end{proof}

\begin{proof}[Proof of Theorem \ref{thm: wedge_spider}]
We first show that $\wedge^2\omega_\std$ is indeed a non-degenerate representation that gives non-trivial second bounded cohomology. To see that it is non-degenerate, consider the representation $\wedge^2\omega_\perm$. Observe that the subspace of $\wedge^2 V_{\perm}$ that is invariant under the action of $\stab_{\Sigma}(x_{q+1})\simeq \Sym(q)$ is spanned by the vector $v:=\sum_{j=1}^q e_j\wedge e_{q+1}$. But the projection of $v$ onto $\wedge^2V_\std$ is 0, which readily implies that $\wedge^2\omega_\std$ is non-degenerate. Now observe that the vector
$$\left(q\cdot e_q-e_{q+1}+\sum_{j=1}^{q-1} -e_j\right)\wedge\left(q\cdot e_{q+1}+\sum_{j=1}^q -e_j\right)$$
is in $(\wedge^2V_\std)^Q\setminus(\wedge^2V_\std)^{\wt{Q}}$, which shows that $\Hcb^2(\aut(T_q),\Ind_{S_0}\wedge^2\omega_\std)\ne 0$ by Theorem \ref{thm: H2_centipede}.

We now prove the uniqueness part of the Theorem. If $q=2$, then the only non-degenerate irreducible representation is the sign representation $\sgn$, which in this case coincides with $\wedge^2\omega_\std$. 

Now suppose that $q\geq 3$. Consider the regular representation $(\lambda, \C[\Sigma])$ of $\Sigma$ on the group ring $\C[\Sigma]$. Then the dimension of the $Q$-invariant subspace $\C[\Sigma]^Q$ is given by $|\Sigma|/|Q|$, which equals to $(q+1)q$. Recall that 
$$\lambda = \bigoplus_{(\omega, V_\omega)\in\widehat{\Sigma}} \omega^{\oplus \dim V_{\omega}},$$
where $\widehat{\Sigma}$ is the collection of all irreducible representations of $\Sigma$ 
. It follows that
\begin{align}\label{eq: dim_Q}
    \sum_{(\omega,V_\omega)\in \widehat{\Sigma}} \dim V_{\omega}\cdot\dim V_\omega^Q = (q+1)q.
\end{align}
Running the same argument for $\wt{Q}$, we also get
\begin{align}\label{eq: dim_Qt}
\sum_{(\omega,V_\omega)\in \widehat{\Sigma}} \dim V_{\omega}\cdot\dim V_\omega^{\wt{Q}} = \frac{(q+1)q}{2}.
\end{align}
By the hook length formula 
(see Section 5.17 in \cite{Eti+11}), one computes that $\dim V_{(q-1,2)}=(q+1)(q-2)/2$. Moreover, recall that $\dim V_\std=q$ and $\dim \wedge^2V_\std=q(q-1)/2$. Combining (\ref{eq: construction_alpha}) with Lemma \ref{lem: std}, Lemma \ref{lem: (q-1,2)} and the fact that $(\wedge^2 V_\std)^Q\ne 0$, we conclude that the $\omega_\std$, $\wedge^2 \omega_{\std}$, $\omega_{(q-1,2)}$, and the trivial representation $\id$ are all irreducible representations of $\Sigma$ with nonzero $Q$-invariant vectors. Since $\wt{Q}\leq Q$, these are also all irreducible representations that could possibly have $\wt{Q}$-invariant vectors. Thus, combining Lemma \ref{lem: std}, Lemma \ref{lem: (q-1,2)} and (\ref{eq: dim_Qt}), one gets
$$\dim V_{(q-1,2)}^{\wt{Q}}=\dim V_{(q-1,2)}^Q=1.$$
Since $\omega_\std$ is not non-degenerate, we have by Theorem \ref{thm: H2_centipede} that $\wedge^2\omega_\std$ is the only irreducible non-degenerate representation $\omega$ such that $\Hcb^2(\aut(T_q), \Ind_{S_0}\,\omega)\ne0$: all other irreducible representations $(\omega, V_\omega)$ of $\Sigma$ are degenerate or satisfy $V_\omega^Q/V_{\omega}^{\wt{Q}}=0$. 
\end{proof}

\subsection{The Centipedes}\label{subsec: cent}

Now let $S_0$ be a $k$-centipede ($k\geq 3$) in the $(q+1)$-regular tree $T_q$. Then $S_0$ has two heads $H_1$ and $H_2$. Label the degree-1 vertices in $H_1$ and $H_2$ by $x_1,...,x_{q}$ and $y_1,...,y_{q}$, respectively. Let $\Sigma:= \aut(S_0)$. We identify the fixator subgroups $\mathrm{Fix}_\Sigma(S_0\setminus\{x_1,...,x_q\})$ and $\mathrm{Fix}_\Sigma(S_0\setminus\{y_1,...,y_q\})$ with $\aut(H_1)$ and $\aut(H_2)$ respectively, and further identify $\aut(H_j)$ with $\Sym(q)$ as in the first paragraph of the previous section. 

Set $Q:=Q(x_q,y_q)$ and $\wt{Q}:=\wt{Q}(x_q,y_q)$. Fix once for all an order-2 element $\wt{q}\in \wt{Q}\setminus Q$ that flips $x_j$ and $y_j$ for $j=1,...,q$. If $\Sigma^+:=\aut^+(S_0)$ is the index-2 subgroup of $\Sigma$ defined in the proof of proposition \ref{prop: dim_1} and $(\omega,V_\omega)$ is a representation of $\Sigma^+$, then we may consider its conjugate representation $(\omega^{\wt{q}},V_\omega)$ of $\Sigma^+$ defined by
$\omega^{\wt{{q}}}(g):=\omega(\wt{q}^{-1}g\wt{q}).$
If there is an isomorphism $\Phi:V_\omega\to V_\omega$ between the two representations $\omega$ and $\omega^{\wt{q}}$, then one can extend $(\omega,V_\omega)$ to a representation $(\wt\omega,V_\omega)$ of the whole group $\Sigma$ by defining $\wt{\omega}(\wt q):= \Phi$. 

{Observe that we can identify $\Sigma^+$ with the direct product
$$\Sym(q)\times \Sym(q-1)^{k-3}\times\Sym(q).$$
Thus, if $(\omega_1,V_1)$ and $(\omega_2,V_2)$ are two irreducible representations of $\Sym(q)$, then the representation
$$(\omega_1\otimes\id^{\otimes{k-3}}\otimes\omega_2,\;V_1\otimes \C^{\otimes{k-3}}\otimes V_2),$$
which we abusively abbreviate as $(\omega_1\otimes\omega_2, V_1\otimes V_2)$, is an irreducible representation of $\Sigma^{+}$. 

Now consider the standard representation $(\omega_\std,V_\std)$ of $\Sym(q)$, then $\omega_\std\otimes\omega_\std$ is isomorphic to its conjugate representation $(\omega_\std\otimes\omega_\std)^{\wt{q}}$ via the map $\Phi_\std:V_\std\otimes V_\std\to V_\std\otimes V_\std$ defined by $\Phi_\std(v\otimes w)=w\otimes v$. Thus, it can be extended to an irreducible representation $(\hat{\omega}_\std, V_\std^{\otimes 2})$ of $\Sigma$ of degree $(q-1)^2$}.

Denote by $\varepsilon$ the 1-dimensional representation of $\Sigma$ whose kernel is $\Sigma^+$.

\begin{lem}\label{lem: centipedes_std}
Both $\hat{\omega}_\std$ and the tensor product $\hat{\omega}'_\std:=\hat{\omega}_\std\otimes\varepsilon$ are non-degenerate representations of $\Sigma$ with a nonzero $Q$-invariant vector. Moreover, one has
$$\Hcb^2(\aut(T_q), V_{\hat{\omega}_\std})=0 \;\;\,\textrm{and}\;\;\,\Hcb^2(\aut(T_q), V_{\hat{\omega}_\std'})\ne 0.$$
\begin{proof}
For $j=1,2$, the restriction of $\omega_\std\otimes\omega_\std$ to the subgroup $\aut(H_j)$ is isomorphic to the standard representation of $\Sym(q)$, which does not contain the trivial representation. This readily implies the non-degeneracy of both $\hat{\omega}_\std$ and $\wt{\omega}'_\std$. Now, observe that the vector
$$v:=\left(-(q-1)\cdot e_q+\sum_{j=1}^{q-1}e_j\right)\otimes \left(-(q-1)\cdot e_q+\sum_{j=1}^{q-1}e_j\right)\in V_\std\otimes V_\std$$
is a non-zero $Q$-invariant vector. Since $(\Sigma^+,Q)$ is a Gelfand pair (see Proposition \ref{prop: dim_1}), the subspace $(V_\std\otimes V_\std)^Q$ is spanned by $v$. Since $v$ is also invariant under $\Phi_\std=\hat{\omega}_\std(\wt{q})$, one concludes that $v\in V_{\wt\omega_\std}^{\wt{Q}}$ and it follows that 
$$\Hcb^2(\aut(T_q), V_{\hat{\omega}_\std}^{\wt{Q}})=0$$
by Theorem \ref{thm: H2_centipede}. Finally, since $\hat{\omega}_\std'(\wt{q})v=-v$, one has $v\notin V_{\hat{\omega}_\std'}^{\wt{Q}}$ and so
$$\Hcb^2(\aut(T_q), V_{\hat{\omega}_\std'})\ne 0$$
by Theorem \ref{thm: H2_centipede}.
\end{proof}
\end{lem}

\begin{thm}\label{thm: centipede_rep}
Denote by $\varepsilon$ the 1-dimensional representation of $\Sigma$ whose kernel is $\Sigma^+$. Then the tensor product $\hat{\omega}_\std':=\hat{\omega}_\std\otimes\varepsilon$ is the unique non-degenerate irreducible representation $\omega$ of $\Sigma$ such that
$$\Hcb^2(\aut(T_q), \Ind_{S_0} \, \omega)\ne0.$$
\begin{proof}
We follow the same strategy used in the proof of Theorem \ref{thm: wedge_spider}: Consider the regular representation $(\lambda,\C[\Sigma])$, then
$$\dim \C[\Sigma]^Q = \frac{|\Sigma|}{|Q|} = \frac{2|\Sigma^+|}{|Q|}=\frac{2\cdot|\Sym(q)\times\Sym(q-1)^{k-3}\times\Sym(q)|}{|\Sym(q-1)^{k-1}|}=2q^2.$$
It follows that 
\begin{align}\label{eq: 2q^2}
\sum_{(\hat\omega,V_{\hat\omega})\in \widehat{\Sigma}} \dim V_{\hat\omega}\cdot\dim V_{\hat\omega}^Q = 2q^2.
\end{align}

Now consider the representation $(\id\otimes \omega_\std, \C\otimes V_\std)$ of $\Sigma^+$. Then note that it is inequivalent to its conjugate representation $(\id\otimes\omega_\std)^{\wt{q}}$, which is isomorphic to $(\omega_\std\otimes\id, V_\std\otimes\C)$. Thus, the induced representation of $\Sigma$
$$\hat{\omega}_{2(q-1)}:=\Ind_{\Sigma^+}^\Sigma \,(\id\otimes\omega_\std)$$
is irreducible (see \cite{Ser77}, Section 7.4). Moreover, both $\id\otimes \omega_\std$ and $\omega_\std\otimes\id$ admit nontrivial $Q$-invariant vectors: if $v :=-(q-1)\cdot e_q+\sum_{j=1}^{q-1}e_j$, then $1\otimes v$ and $v\otimes 1$ are $Q$-invariant vectors for $\id\otimes \omega_\std$ and $\omega_\std\otimes\id$ respectively. Since  
$$\Res_\Sigma^{\Sigma^+}\hat{\omega}_{2(q-1)}\simeq (\id\otimes \omega_\std)\oplus (\id\otimes \omega_\std)^{\wt{q}}, $$
we have that $\dim V_{\hat{\omega}_{2(q-1)}}^Q=2$, and so\, $(\dim V_{\hat{\omega}_{2(q-1)}})\cdot (\dim V_{\hat{\omega}_{2(q-1)}}^Q)=4(q-1)$. 

Finally, by Lemma \ref{lem: centipedes_std} and (\ref{eq: 2q^2}), we conclude that the only irreducible representations of $\Sigma$ that could admit nontrivial $Q$-invariant vectors are the trivial representation, $\varepsilon$, $\hat{\omega}_\std$, $\hat{\omega}_\std'$, and $\hat{\omega}_{2(q-1)}$. Since $\varepsilon$ and $\hat{\omega}_{2(q-1)}$ are both degenerate, the proof is complete by Lemma \ref{lem: centipedes_std} and Theorem \ref{thm: H2_centipede}.
\end{proof}
\end{thm}

Theorem \ref{thm: C} now follows from Theorem \ref{thm: wedge_spider} and \ref{thm: centipede_rep}.

\section{The Cocycle of Monod and Shalom}\label{sec: D}

In this section, we recall the construction of the cocycle by Monod and Shalom in \cite{MS04} (see also \cite{MS03}) and explain its relationship to our previous computations. We will address Theorem \ref{thm: D} in the introduction.

Let $\cE_{T}$ be the collection of edges in a tree $T$, and let $\vee^k\cE$ be the collection of all $k$-tuples $(\xi_1,...,\xi_k)\in \cE_T^k$ such that $\xi_1,...,\xi_k$ forms a finite sequence of non-backtracking consecutive edges in $T$. In this case, we write $\mathbf{\e}=\xi_1\vee...\vee \xi_k$ and denote by $\mathbf{e}^{-1}$ the reversed sequence $\xi_k\vee...\vee \xi_1$. 

Now let $G=\aut(T_q)$ and write $\cE:=\cE_{T_q}$. Consider the $G$-module
$(\lambda,\ell^p_\alt(\vee^k\cE))$ consisting of all functions $f\in\ell^p(\vee^k\cE_T)$ with
$$f(\mathbf{e})=-f(\mathbf{e}^{-1})$$
for all $\mathbf{e}\in \vee^k\cE_T$, where $\aut(T)$ acts by left translations. For each $k\geq 2$, a nontrivial class in $\Hcb^2(G, \ell_\alt^2(\vee^k\cE))$ was constructed by Monod and Shalom in \cite{MS04} as follows: for any $\gamma,\eta\in\bd T$, define $\alpha^k_{\textrm{ms}}(\gamma,\eta)\in \ell_\alt^\infty(\vee^k\cE)$ by
$$\alpha^k_{\textrm{ms}}(\gamma,\eta)(\mathbf{e}):=
\begin{cases}
    1 & \textrm{if $(\gamma,\mathbf{e},\eta)$ is positively oriented}\\
    -1 & \textrm{if $(\gamma,\mathbf{e},\eta)$ is negatively oriented}\\
    0 & \textrm{otherwise.}
\end{cases}
$$
Here, we say that $(\gamma,\mathbf{e}=(e_1,...,e_k), \eta)$ is \textbf{positively oriented} if $\mathbf{e}$ lies on the bi-infinite geodesic $L(\gamma,\eta)$ joining $\gamma$ and $\eta$ for all $j$ and $e_j$ is closer to $\gamma$ compared to $e_{j-1}$ for all $j=2,...,k$. We say that $(\gamma,\mathbf{e}, \eta)$ is \textbf{negatively oriented} if $(\gamma,\mathbf{e}^{-1}, \eta)$ is positively oriented. Then the coboundary
$$c^k_{\textrm{ms}}:=d\alpha^k_{\textrm{ms}}: \bd T\times\bd T\times\bd T\longrightarrow \ell^2(\vee^k\cE)$$
is a nontrivial cochain in degree 2. From now on, we fix $k \geq 2$ and write $c_\ms$ and $\alpha_\ms$ instead of $c_\ms^k$ and $\alpha_\ms^k$.

In this subsection, we will decompose the module $\ell_\alt^2(\vee^k\cE)$ and describe $\alpha_\textrm{ms}$ in terms of the nontrivial cocycle that corresponds to Theorem \ref{thm: wedge_spider} and \ref{thm: centipede_rep}. To this end, we first give an alternative description of $\ell_\alt^2(\vee^k\cE)$ in terms of centipedes or spiders.

Fix a $k$-centipede (or a spider if $k=2$) $S_0$ in $T_q$. Let $\Sigma = \aut(S_0)$ and write $\cE_0:=\cE_{S_0}$. Consider the $\Sigma$-module $(\omega_{\textrm{ms}},V_{\mathrm{ms}})$, where $V_\ms$ consists of all alternating functions $f:\vee^k\cE_0\longrightarrow\C$ on which $\Sigma$ acts by left translations. Note that $\vee^k\cE_0$ can be identified with the set of all ordered pairs $(x,y)\in S_0^2$ of two vertices $x$ and $y$ such that $d(x,y)=k$, and we will write $\e \in \vee^{k}\cE_0$ as $x\vee y$ if $x$ and $y$ are the two endpoints of $\e$. 

\begin{prop}\label{prop: ms_centipede_rep}
For $p\in [1,+\infty]$, equip the finite-dimensional vector space $V_\ms$ with the $\ell^p$-norm and let $\pi_{\ms,p}$ be the $L^p$-induction $L^p\Ind_{\wt{G}(S)}^G \wt{\omega}_{\ms}$. Then $\pi_{\ms,p}$ is isometrically isomorphic to $(\lambda,\ell_\alt^p(\vee^k \cE))$ for $1\leq p\leq +\infty$. If $p = 2$, then the isomorphism is a unitary equivalence.
\begin{proof}
Write $\pi :=\pi_{\ms,p}$. As usual, we fix a set theoretic section $s:\cM_\pi\to G$ such that $s(S_0)=e_G$. Then 
$$(\pi(g)f)(S) = \wt{\omega}_\ms(s(S)^{-1}gs(g^{-1}S))f(g^{-1}S)$$
for all $g\in G, f\in \ell^p(\cM_\pi, V_\ms)$ and $S\in\cM_\pi$.

Given a map $f\in \ell^p(\cM_\pi, V_\ms)$, we define $\varphi f:\vee^k\cE\rightarrow\C$ as follows: for any $\mathbf{e}:=e_1\vee...\vee e_k$, there is a unique $S_\e\in \cM_\pi$ that contains $\mathbf{e}$. Define
$$\varphi f(\mathbf{e}):=f(S_\e)(s(S_\e)^{-1}\e).$$
Then $\varphi f$ defines an element in $\ell_\alt^p(\vee^k\cE)$. Conversely, given $f\in \ell_\alt^p(\vee^k\cE)$, one may define $\psi f: \cM_\pi\to V_\ms$ by
$$((\psi f)(S))(\e):=f(s(S)\e)\quad\forall S\in \cM_\pi, \,\e\in \vee^k\cE_0.$$
Then $\psi: \ell_\alt^p(\vee^k\cE)\to \ell^p(\cM_\pi,V_\ms)$ is the inverse of $\varphi: \ell^p(\cM_\pi,V_\ms)\rightarrow \ell_\alt^p(\vee^k\cE)$.
Indeed, we have
$$(\varphi(\psi f))(\e)=((\psi f)(S_\e))(s(S_\e)^{-1}\e)=f(\e)$$
for all $f\in\ell_\alt^p(\vee^k\cE)$ and $\e\in \vee^k\cE$. Conversely, we have for any $f\in \ell^p(\cM_\pi, V_\ms)$, $S\in \cM_\pi$ and $\e\in \vee^k\cE_0$,
$$((\psi(\varphi f))(S))(\e)= (\varphi f)(s(S)\e)=f(S)(\e),$$
where the last equality follows from the observation that $S_{s(S)\e} = S$. This shows in particular that $\varphi$ is bijective. To see that $\varphi$ is $G$-equivariant, we compute that
\begin{align*}
    (\lambda(g)(\varphi f))(\e) = \varphi f(g^{-1}\e) = f(g^{-1}S_{\e})(s(g^{-1}S_\e)^{-1}(g^{-1}\e))
\end{align*}
for $g\in G$, $f\in \ell_\alt^p(\cM_\pi,V_\ms)$ and $\e\in\vee^k\cE$. On the other hand, we have
\begin{align*}
    ((\varphi\circ\pi(g))f)(\e) &= ((\pi(g)f)(S_\e))(s(S_\e)^{-1}\e)\\
    &=(\wt{\omega}_\ms(s(S_\e)^{-1}gs(g^{-1}S_\e))\cdot f(g^{-1}S_\e))(s(S_\e)^{-1}\e)\\
    &=f(g^{-1}S_{\e})(s(g^{-1}S_\e)^{-1}(g^{-1}\e)),
\end{align*}
which shows that $\varphi$ is $G$-equivariant. If $p=+\infty$, then it is clear that both $\varphi$ and $\psi$ are of norm at most one, which shows that $\varphi$ is an isometry. Now suppose that $p=2$, then $\varphi$ is unitary since
\begin{align*}
\inprod{\varphi f_1,\varphi f_2} &= \sum_{\e\in \vee^k\cE} f_1(S_\e)(s(S_e)^{-1}\e)\cdot \overline{f_2(S_\e)(s(S_e)^{-1}\e)}\\
&= \sum_{S\in \cM_\pi}\,\sum_{\e\in \vee^k\cE_{S}} f_1(S)(s(S)^{-1}\e)\cdot \overline{f_2(S)(s(S)^{-1}\e)}\\
&= \sum_{S\in \cM_\pi}\,\sum_{\e\in \vee^k \cE_0} f_1(S)(\e)\cdot\overline{f_2(S)(\e)} = \inprod{f_1,f_2}
\end{align*}
for all $f_1,f_2\in \ell^2(\cM_\pi,V_{\ms})$. A similar computation shows that $\varphi$ is an isometry for all $p<+\infty$. 
\end{proof}
\end{prop}

We take a closer look at the module $V_\ms$. If $S_0$ is a spider, then we label the $(q+1)$ vertices of degree $1$ by $x_1,...,x_{q+1}$. We identify $\Sigma\simeq \Sym(q+1)$ and define $Q = Q(x_{q},x_{q+1})$ and $\wt{Q}$ as in Subsection \ref{subsec: spider}. For future use, we let $\wt{q}\in \wt{Q}\setminus Q$ be the transposition that flips $x_{q}$ and $x_{q+1}$.

\begin{lem}\label{lem: spider_ms}
Suppose that $S_0$ is a spider in $T_q$. Let $(\omega_\perm, V_\perm)$ be the permutation representation defined in the beginning of Section \ref{subsec: spider}. The second exterior product $\wedge^2 \omega_\perm$ is then isomorphic to $\omega_\ms$. 
\begin{proof}
Define $f_{ij}\in V_\ms$ by
$$
f_{ij} (x_k\vee x_l) = 
\begin{cases}
    1 & \textrm{if $(k,l)=(i,j)$}\\
    -1 & \textrm{if $(k,l)=(j,i)$}\\
    0 & \textrm{otherwise}
\end{cases}
$$
for all $i\ne j$. Then $\{f_{ij}\}_{i<j}$ forms a basis of $V_\ms$. Consider the map $\varphi:V_\ms\to \wedge^2V_\perm$ defined by sending $f_{ij}$ to $e_i\wedge e_j$. It is a direct verification that $\varphi$ is $\Sigma$-equivariant. Moreover, it is an isomorphism since it sends a basis of $V_\ms$ to a basis of $\wedge^2 V_\perm$. 
\end{proof}
\end{lem}

Now suppose that $S_0$ is a $k$-centipede for $k\geq 3$ and let $H_1$, $H_2$ be the two heads of $S_0$. Label the degree-1 vertices in $H_1$ and $H_2$ by $x_1,...,x_q$ and $y_1,...,y_q$ respectively. We identify $\aut(H_j)$ with $\Sym(q)$ and define $Q:=Q(x_q,y_q)$ and $\wt{Q}:=\wt{Q}(x_q,y_q)$ as in subsection \ref{subsec: cent}. Fix an order-2 element $\wt{q}\in \Sigma$ that sends $x_j$ to $y_j$  Consider the permutation representation $\omega_\perm$ of $\Sym(q)$. Then $\omega_\perm\otimes\omega_\perm$ defines a representation of the index-2 subgroup $\Sigma^+$ that is isomorphic to its conjugate representation $(\omega_\perm\otimes\omega_\perm)^{\wt{q}}$ through the map $\Phi':v\otimes w\mapsto -w\otimes v$. Thus, it can be extended to a representation $(\hat{\omega}_\perm', V_\perm^{\otimes 2})$ by defining $\hat{\omega}'_\perm(\wt{q})=\Phi'$.

\begin{lem}\label{lem: centipede_ms}
Let $S_0$ be a $k$-centipede. Then $\hat{\omega}_\perm'\simeq \omega_\ms$.
\begin{proof}
Similarly to Lemma \ref{lem: spider_ms}, define $f_{ij}$ by
$$
f_{ij} (x_k\vee y_l) = 
\begin{cases}
    1 & \textrm{if $(k,l)=(i,j)$}\\
    0 & \textrm{otherwise}
\end{cases}
$$
for all $i\ne j$. Then $\{f_{ij}\}_{i,j\in{1,...,q}}$ is a basis of $V_\ms$. Now, the map $V_\ms\to V_\perm\otimes V_\perm$ defined by sending $f_{ij}$ to $e_i\otimes e_j$ is an isomorphism between $\Sigma$-modules. 
\end{proof}
\end{lem}

We have thus proved the first part of Theorem \ref{thm: D}:
\begin{cor}\label{cor: ms_centipede_rep}
If $\hat\omega$ is the unique irreducible nondegenerate representation described in Theorem \ref{thm: wedge_spider} and Theorem \ref{thm: centipede_rep} such that $\Hcb^2(G, \Ind_{S_0}\,\hat\omega)\ne 0$, then $\hat\pi:=\Ind_{S_0}\,\hat\omega$ is a subrepresentation of $(\lambda,\ell_\alt^2(\vee^k \cE))$. 
\begin{proof}
Since $\omega_\std$ is a subrepresentation of $\omega_\perm$, the statement follows directly from Proposition \ref{prop: ms_centipede_rep} along with Lemma \ref{lem: spider_ms} and \ref{lem: centipede_ms}.
\end{proof}
\end{cor}

We now study the relationship between $c_\ms = d\alpha_\ms$ and the nontrivial classes in the 1-dimensional $\Hcb^2(G, \Ind_{S_0}\,\hat\omega)$ and undertake the proof of the second part of Theorem \ref{thm: D}

To this end, choose $\gamma_0,\gamma_1\in \bd T$ such that $S_0$ lies on $L(\gamma_0, \gamma_1)$. 
If $S_0$ is a spider, we assume that $x_q,x_{q+1}\in L(\gamma_0,\gamma_1)$ and that $(\gamma_0, x_q\vee x_{q+1},\gamma_1)$ is positively oriented. If $S_0$ is a centipede, then we similarly assume that $x_q,y_q\in L(\gamma_0,\gamma_1)$ and that $(\gamma_0, x_q\vee y_{q},\gamma_1)$ is positively oriented. 

Let $\omega$ be any representation of $\aut(S_0)$, and let $\pi:=\Ind_{S_0}\, \omega$. Denote by $\cA_\omega$ the space consisting of all $\alpha\in \cF_\alt ((\bd T)^2, \ell^\infty(\cM_\pi, V))^G$ such that for any $(\gamma,\eta)\in (\bd T)^2$, the map $\alpha(\gamma,\eta)$ vanishes for all $S\in \cM_\pi$ not lying on $L(\gamma, \eta)$. Then by Remark \ref{rmk: centipede_alpha}, the map
$$\wt{\ev}: \cA_\omega\longrightarrow E_-(\omega,\wt{q})$$
defined by $\wt{\ev}(\alpha):=\alpha(\gamma_0,\gamma_1)(S_0)$
is an isomorphism. Here, $E_-(\omega,\wt{q})\simeq V^{Q}/V^{\wt{Q}}$ is the $(-1)$-eigenspace of $\omega(\wt{q})$ in $V^Q$. If $v\in E_-(\omega,\wt{q})$, then recall that its preimage $\alpha:=\wt{\ev}^{-1}(v)$ is given by (\ref{eq: construction_alpha}) in Lemma \ref{lem: centipede_alpha}.

Since $\hat\omega$ is a subrepresentation of $(\omega_\ms, V_\ms)$, we may 
consider the orthogonal projection $\hat P$ of $V_\ms$ onto $V_{\hat\omega}$ (with respect to the $\ell^2$-inner product). This induces a map 
$$\hat P_*: \cF_\alt^\infty ((\bd T)^{n+1}, \ell^p(\cM_{\pi_\ms}, V_\ms))^G\longrightarrow \cF_\alt^\infty ((\bd T)^{n+1}, \ell^p(\cM_{\hat\pi}, V_{\hat\pi}))^G$$
for $p\in [1,+\infty]$ where $\pi_\ms = \Ind_{S_0}\,\omega_\ms$ and $\hat\pi = \Ind_{S_0}\,\hat\pi$. We restate below the second part of Theorem \ref{thm: D} under the above notation:

\begin{thm}\label{Thm: MS}
Let $\kappa:= [\hat P_*c_\ms]\in \Hcb^2(G, \Ind_{S_0}\,\hat\omega)$. Then $\Hcb^2(G, \Ind_{S_0}\,\hat\omega) \simeq \C\cdot\kappa.$
\end{thm}

This will be a direct consequence of the following observation:
\begin{lem}\label{lem: commute}
Let $\omega$ be any representation of $\Sigma$ and let $\pi = \Ind_{S_0}\,\omega$. Suppose that $\omega=\omega_1\oplus\omega_2$ is a direct sum of its two subrepresentations $(\omega_1,V_1)$, $(\omega_2, V_2)$, then the diagram
\begin{equation*}\label{eq: comparison}
\begin{tikzcd}
    \cA_\omega \arrow[r, "\wt{\ev}"] \arrow[d, "(P_j)_*"'] & E_-(\omega,\wt{q})\arrow[d, "P_j"] \\
    \cA_{\omega_j} \arrow[r, "\wt{\ev}"] & E_-(\omega_j,\wt{q})
\end{tikzcd}
\end{equation*}
is commutative for each $j=1,2$. Here, $P_j$ is the projection of $V_\omega$ onto $V_j$, which induces a map $(P_j)_*:\cA_\omega\to\cA_{\omega_j}$. 
\begin{proof}
Let $\alpha\in\cA_\omega$. Then $\alpha=\wt{\ev}^{-1}(v)$ for $v = \alpha(\gamma_0,\gamma_1)(S_0)$. It is enough to show that 
$$(P_j)_*\alpha=\wt{\ev}^{-1}(P_jv).$$
To this end, we compute that
\begin{align*}
((P_j)_*\alpha)(g\gamma_0,g\gamma_1)(gS_0) &= P_j(\alpha(g\gamma_0,g\gamma_1)(gS_0))  
= P_j(\wt{\omega}(s(gS_0)^{-1}g)\cdot v)\\
&=\wt{\omega}(s(gS_0)^{-1}g)\cdot (P_jv).
\end{align*}
Moreover, if $(\gamma,\eta,S)\in \bd T\times\bd T\times\cM_\pi$ is not in the $G$-orbit of $(\gamma_0,\gamma_1,S_0)$, then $S$ cannot lie on $L(\gamma,\eta)$ and so $\alpha(\gamma,\eta)(S)=(P_j)_*\alpha=0$. It follows that $(P_j)_*\alpha$ agrees with the definition of $\wt{\ev}^{-1}(P_jv)$.  
\end{proof}
\end{lem}

\begin{proof}[Proof of Theorem \ref{Thm: MS}]
Note that in the Theorem, we identified $(\lambda, \ell_\alt^p(\vee^k \cE))$ with $L^p\Ind_{\wt{G}}^G\,\wt{\omega}_\ms$ via Proposition \ref{prop: ms_centipede_rep}.

Since $\Hcb^2(G,\Ind_{S_0}\,\hat{\omega})$ is 1-dimensional by Proposition \ref{prop: dim_1}, it is enough to show that $\kappa\ne 0$, which amounts to showing that $\hat{P}_*\alpha_\ms\ne 0$ by the proof of Theorem \ref{thm: H2_centipede}. Set $v := \alpha_\ms(\gamma_0,\gamma_1)(S_0)\in V_\ms$, then it suffices to show that $\hat Pv\ne 0$ and
$$\hat P_*\alpha_\ms = \wt{\ev}^{-1}(\hat{P}v).$$
To this end, consider $\alpha_\ms$ as a map 
$$\alpha_\ms:\bd T\times \bd T\longrightarrow \ell^\infty(\cM_{\pi_\ms}, V_\ms)$$
by Proposition \ref{prop: ms_centipede_rep}, then $\alpha_\ms \in \cA_{\omega_\ms}$ and consequently $\alpha_\ms = \wt{\ev}^{-1}(v)$. In fact, we have
$$
v = \begin{cases}
    e_q\wedge e_{q+1}& \textrm{if $S_0$ is a spider}\\
    e_q \otimes e_q & \textrm{if $S_0$ is a $k$-centipede for $k\geq 3$,}
\end{cases}
$$
where we identified $V_\ms$ with $\wedge^2\omega_\perm$ and $\hat{\omega}_\perm$ respectively by Lemma \ref{lem: spider_ms} and \ref{lem: centipede_ms}. It is now evident that $\hat{P}v\ne0$. The fact that $\hat P_*\alpha_\ms = \wt{\ev}^{-1}(\hat{P}v)$ follows directly from Lemma \ref{lem: commute}.
\end{proof}
  


\printbibliography

\end{document}